\documentclass[12pt,reqno]{amsart}

\textheight=21truecm
\textwidth=15truecm
\voffset=-1cm
\hoffset=-1cm

\usepackage{color}
\usepackage{amsmath, amsthm, amssymb}
\usepackage{amsfonts}
\usepackage[ansinew]{inputenc}
\usepackage[dvips]{epsfig}
\usepackage{graphicx}
\usepackage[english]{babel}
\usepackage{hyperref}
\theoremstyle{plain}
\newtheorem{thm}{Theorem}[section]
\newtheorem{cor}[thm]{Corollary}
\newtheorem{lem}[thm]{Lemma}
\newtheorem{prop}[thm]{Proposition}
\newtheorem{claim}[thm]{Claim}

\theoremstyle{definition}

\theoremstyle{remark}
\newtheorem{rem}[thm]{Remark}

\numberwithin{equation}{section}

\newcommand{\average}{{\mathchoice {\kern1ex\vcenter{\hrule height.4pt
width 6pt depth0pt} \kern-9.7pt} {\kern1ex\vcenter{\hrule
height.4pt width 4.3pt depth0pt} \kern-7pt} {} {} }}
\newcommand{\ave}{\average\int}
\def\R{\mathbb{R}}
\newcommand{\I}{{\rm I}}
\newcommand{\cI}{{\rm I}}

\begin{document}

\title[$C^{\sigma+\alpha}$ regularity for concave equations with rough kernels]{$C^{\sigma+\alpha}$ regularity for concave nonlocal fully nonlinear elliptic equations with rough kernels}
\author{Joaquim Serra}

\address{Universitat Polit\`ecnica de Catalunya, Departament de Matem\`{a}tica  Aplicada I, Diagonal 647, 08028 Barcelona, Spain}

\maketitle

\begin{abstract}
We establish $C^{\sigma+\alpha}$ interior estimates for concave nonlocal fully nonlinear equations of order $\sigma\in(0,2)$ with rough kernels.
Namely, we prove that  if $u\in C^{\alpha}(\R^n)$ solves  in $B_1$ a concave translation invariant equation with kernels in $\mathcal L_0(\sigma)$,
then  $u$ belongs to $C^{\sigma+\alpha}(\overline{ B_{1/2}})$, with an estimate.
More generally, our results  allow the equation to depend on $x$ in a $C^\alpha$ fashion.

Our method of proof combines a Liouville theorem and a blow-up (compactness) procedure.
Due to its flexibility, the same method can be useful in different regularity proofs for nonlocal equations.
\end{abstract}

\section{Introduction and results}

In the paper \cite{CS3}, Caffarelli and Silvestre established the $C^{\sigma+\alpha}$ interior regularity for concave translation invariant nonlocal  fully nonlinear equations of order $\sigma\in(0,2)$ with smooth kernels.
This result extended the classical $C^{2,\alpha}$ interior estimates  for concave second order elliptic equations of Evans \cite{evans} and Krylov \cite{krylov} to the context of integro-differential equations.
The main result in \cite{CS3} states  that if $u\in L^\infty(\R^n)$ satisfies $\inf_a L_a u=0$ in $B_1$ and $L_a\in \mathcal L_2(\sigma)$ for all $a$, then $u\in C^{\sigma+\alpha}\bigl(\overline{B_{1/2}}\bigr)$, with an estimate.

The ellipticity class $\mathcal L_2=\mathcal L_2(\sigma)$ is defined as the set of
all linear translation invariant operators of the form
\begin{equation}\label{operator}
 L_a u = \int_{\R^n} \frac{1}{2}\bigl(u(x+y)+u(x-y)-2u(x)\bigr) K_a(y)\,dy,
 \end{equation}
where $K_a$ are even kernels satisfying
\begin{equation}\label{L0}
0<\frac{\lambda (2-\sigma)}{ |y|^{n+\sigma}} \le K_a(y) \le \frac{\Lambda (2-\sigma)}{ |y|^{n+\sigma}}
\end{equation}
and, in addition, with all its second order partial derivatives satisfying the following scaling invariant bounds away from the origin:
\begin{equation}\label{L2}
[ K_a ]_{C^2(\R^n\setminus B_\rho)} \le  \Lambda (2-\sigma) \rho^{-n-\sigma-2} \quad \mbox{for all }\rho>0.
\end{equation}

The class $\mathcal L_2$ is a subclass of the class $\mathcal L_0$, where $\mathcal L_0$ is formed by all operators of the form \eqref{operator} that satisfy \eqref{L0} but not necessarily \eqref{L2}.
The bounds by above and by below in \eqref{L0} allow the kernels in $\mathcal L_0$ to be very oscillating and irregular, and that is why they are referred to as {\em rough kernels}.

After the paper \cite{CS3}, the following two main questions in the regularity theory of concave nonlocal fully nonlinear elliptic equations remained open.

A first open question was to determine weather the same $C^{\sigma+\alpha}$ estimates held also for non-smooth kernels.
In this direction, to prove the interior $C^{\sigma+\alpha}$ regularity for the equation
\begin{equation}\label{pucci}
 M^-_{\mathcal L_0} u =  0 \quad \mbox{in }B_1.
\end{equation}
is the  fifth open problem listed in the wiki of Nonlocal Equations \cite{wiki}.
Recall that the  extremal operator for the class $\mathcal L_0$ is defined as
\begin{equation}\label{ML0}
 M^-_{\mathcal L_0 } u(x) = \inf_{L\in \mathcal L_0} Lu(x) = \int_{\R^n} \left\{ \lambda\bigl(\delta^2 u(x,y)\bigr)^+ - \Lambda \bigl(\delta^2 u(x,y)\bigr)^-\right\} \, \frac{2-\sigma}{|y|^{n+\sigma}}\,dy.
 \end{equation}
Here, and throughout the article, we use the following notation for second order incremental quotients
\[ \delta^2u(x,y)  =  \frac{1}{2}\bigl(u(x+y)+u(x-y)-2u(x)\bigr).\]

The equation \eqref{pucci} is arguably the canonical example of concave equation of order $\sigma$. As given in \eqref{ML0}, $M^-_{\mathcal L_0}$ has a simple ``closed expression'', similar to
\[M^-(D^2 u) = \lambda {\rm tr }(D^2 u)^+ - \Lambda{\rm tr }(D^2 u)^- \]
for the second order Pucci. Such a closed expression is not available for $M^-_{\mathcal L_2}$.
However, the equation   \eqref{pucci}  is not covered by the theory in \cite{CS3} since it is elliptic with respect to $\mathcal L_0$ but not  with respect to $\mathcal  L_2$.

A second natural  question that remained open after the paper \cite{CS3} was to prove a $C^{\sigma+\alpha}$ Schauder type estimate for non translation invariant equations with $C^\alpha$ dependence on $x$.
In the case of second order fully nonlinear elliptic equations, this Schauder estimate is a classical result. Is is proved by exploiting the fact that in a small neighborhood of a given point the equation is a small perturbation of a translation invariant equation ---this is the nonlinear perturbation method of Caffarelli \cite{Caff}.
To prove $C^{\sigma+\alpha}$ regularity for equations of order $\sigma\in (0,2)$, the same method does not work essentially because if $u$ is a function with a zero of order $\sigma+\alpha$ at $x=0$, the scaling  $\rho^{-\sigma-\alpha} u(\rho\,\cdot\,)$, $\rho\ll 1$ typically results in a growth of the type $|x|^{\sigma+\alpha}$ at infinity, which is not integrable against the tails of the kernel.
This difficulty will be described in more detail later on in the introduction.

In this paper we answer the previous two questions.
More precisely,  we establish existence, uniqueness, and $C^{\sigma+\alpha}$ interior regularity, for nonlocal Dirichlet problems of the form
\begin{equation}\label{dir-pb}
\begin{cases}
\cI(u,x)= 0 \quad & \mbox{in }B_1\\
u=g \quad & \mbox{in }\R^n \setminus B_1,
\end{cases}
\end{equation}
where $\cI$ is a concave operator, elliptic with respect to $\mathcal L_0$, and depending on $x$ in $C^\alpha$ fashion ---see assumptions \eqref{nontrinv}-\eqref{nontrinv1}-\eqref{nontrinv22}-\eqref{nontrinv2} below.
We prove that if $g\in C^\alpha(\R^n\setminus B_1)$ (with $\alpha$ small), then there exists a unique viscosity solution to the problem \eqref{dir-pb}, which is  $C^{\sigma+\alpha}$ in the interior of $B_1$ ---with an estimate in $\overline{B_{1/2}}$.

For equations with rough kernels, our assumption on the {\em complement data} (or {\em exterior data}) $g\in C^\alpha$ can not be weakened to $g\in L^\infty$ ---as in \cite{CS3}.
Indeed,  in the paper we find a sequence of functions $u_m \in C(\R^n)$ that solve in the viscosity sense $M^+_{\mathcal L_0} u_m= 0$ in $B_1$ and satisfy  $\|u_m\|_{L^\infty(\R^n)} =  1$ but   $\|u_m\|_{C^{\sigma+\alpha}(\R^n)} \nearrow +\infty$ as $m\to \infty$ for all $\alpha>0$.
Hence, a $C^{\sigma+\alpha}$ interior estimate  can not hold for any $\alpha>0$ with merely bounded complement data.
To construct such sequence $u_m$ we exploit the strong sensitivity of nonlocal operators with rough kernels to quickly oscillating complement data, to the point that interior regularity
can be broken ``from the exterior'' by choosing  very oscillating exterior data.
However, the more regular the tails of the kernel are, the less sensitive to far oscillations.
In this direction, we prove that when the kernels belong to the class $\mathcal L_\alpha$ ---a scaling invariant class of  $C^\alpha$ kernels--- solutions to concave equations with merely bounded complement data do have $C^{\sigma+\alpha}$ interior regularity.

A main difficulty of nonlocal operators with rough kernels is that, as said above, they are very sensitive to oscillations in the complement data.
This does not happen with smooth kernels because high frequency oscillations balance out when they are integrated against a kernel with smooth tails.
This idea is recurrently exploited in the proofs of \cite{CS3},  essentially by transferring derivatives from the function to the (smooth) kernels with a sort of integration by parts.
Since we can not do the same with rough kernels, we need a different approach.


Similarly as in the concave case,
the  $C^{1+\gamma}$ regularity for general nonlocal fully nonlinear equations was first established for smooth kernels, and only posteriorly extended to rough kernels.
In \cite{CS}, Caffarelli  and Silvestre obtained $C^{1+\gamma}$ interior estimates for these equations in the intermediate class of kernels $\mathcal L_1$ ---those satisfying \eqref{L2} with $C^2$ replaced by $C^1$ and $\rho^{-n-\sigma-2}$ replaced by $\rho^{-n-\sigma-1}$.
It was Kriventsov \cite{K}  to establish the $C^{1+\gamma}$ estimates for elliptic equations of order $\sigma>1+\gamma$ with rough kernels, that is, for $\mathcal L_0$.
The proof in \cite{K} combines  a new estimate for solutions with Lipchitz complement data  and perturbative (compactness) arguments \`a la \cite{CS2}.

Later, in  \cite{Sparab}, we gave a new proof of the result in \cite{K}, extending it also to the parabolic case.
The key idea of this new proof was to deduce the interior regularity from a Liouville theorem, via a blow-up (compactness) argument.
In the present paper, we refine and improve significantly this method of proof from \cite{Sparab} in order to obtain the $C^{\sigma+\alpha}$ estimates for concave equations.
Moreover, the methods of this paper are flexible enough to be applied in other contexts. For instance, the ideas we introduce here ---suitably adapted--- are crucial in the paper \cite{RSboundary}, by Ros-Oton and the author, where the boundary regularity (of order $1+s+\alpha$)
for fully nonlinear elliptic integro-differential equations of order $2s$ is stablished.

As said above, the results of this paper apply to non translation invariant equations with $C^\alpha$ dependence on $x$.
More precisely, while in \cite{CS, CS2, K} the kernels depend only in $y$ ---i.e.  $K_a= K_a(y)$ as in \eqref{operator}---, here we include kernels $K_a(x,y)$ which are $C^\alpha$ in the variable $x$ (in the appropriate sense) and rough in the variable $y$.

We consider concave operators of the form
\begin{equation}\label{nontrinv}
\cI(u,x) :=  \inf_{a\in \mathcal A} \left( \int_{\R^n} \delta^2 u(x,y) K_a(x,y)\,dy  + c_a(x) \right),
\end{equation}
where $\mathcal A$ is some index set.
We assume that  for all $a\in \mathcal A$,    for all $x$ and $x' $ in $\R^n$, and  for all $r>0$ we have
\begin{equation}\label{nontrinv1}
\frac{ \lambda(2-\sigma)}{ |y|^{n+\sigma}}  \le K_a(x,y) \le \frac{ \Lambda(2-\sigma)}{|y|^{n+\sigma}} , \end{equation}
\begin{equation}\label{nontrinv22}
\int_{B_{2r}\setminus B_r}  \bigl| K_a(x, y)-K_a(x', y)\bigr| dy   \le  A_0 |x-x'|^\alpha \frac{2-\sigma}{r^\sigma},
\end{equation}
and
\begin{equation}\label{nontrinv2}
\quad \|c_a\|_{C^{\alpha}(B_1)}\le C_0,
\end{equation}
where $\lambda\le \Lambda$, $A_0$ and $C_0$ are given  constants.

The following is the main result of the paper.
\begin{thm} \label{thm}
Let $\sigma\in (0,2)$, and $\lambda$, $\Lambda$, $A_0$, and $C_0$ be given constants with $0<\lambda\le \Lambda$.
Then, there exists $\bar\alpha>0$ depending only on $n$, $\sigma$, $\lambda$, $\Lambda$ such that the following statement holds.

Let $\alpha\in(0 ,\bar \alpha)$ such that $\sigma+\alpha$ is not an integer. Assume that  $u\in  C^{\sigma+\alpha}(B_1)\cap C^\alpha(\R^n)$ is a solution of
\[ \cI(u,x) = 0 \quad \mbox{in }B_1,\]
where $\cI$ is of the form \eqref{nontrinv} and satisfying \eqref{nontrinv1}, \eqref{nontrinv22}, and \eqref{nontrinv2}.
We then have
\[ \|u\|_{C^{\sigma+\alpha}(B_{1/2})}\le C(C_0 + \|u\|_{C^\alpha(\R^n)} ),\]
where $C_0$ is the constant from \eqref{nontrinv2} and where $C$ depends only on $n$, $\sigma$,  $\alpha$, $\lambda$,  $\Lambda$,  and $A_0$.
\end{thm}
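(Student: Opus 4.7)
I would argue by contradiction and compactness, deducing the interior regularity from a Liouville-type theorem for global solutions of concave translation-invariant equations elliptic with respect to $\mathcal L_0$, along the lines of the method developed in \cite{Sparab} but refined to carry $\sigma+\alpha$ regularity across the integer thresholds. Set $k:=\lfloor \sigma+\alpha\rfloor$, and for each $\bar x\in B_{3/4}$ and $\rho>0$, let $P_{\bar x,\rho}$ be the polynomial of degree $\le k$ that minimizes $\|u-P\|_{L^2(B_\rho(\bar x))}$. A Campanato-type characterization reduces the theorem to a bound on the excess
\[
\theta(r) \;:=\; \sup_{\bar x\in B_{3/4}}\ \sup_{\rho\ge r}\ \frac{\|u - P_{\bar x,\rho}\|_{L^\infty(B_\rho(\bar x))}}{\rho^{\sigma+\alpha}}.
\]
If this fails, there are sequences of operators $\mathcal I_m$ satisfying the hypotheses uniformly, solutions $u_m$ with $C_0^{(m)}+\|u_m\|_{C^\alpha(\mathbb R^n)}\le 1$, points $x_m\in\overline{B_{3/4}}$, and scales $\rho_m\downarrow 0$ with $M_m:=\rho_m^{-\sigma-\alpha}\|u_m-P_{x_m,\rho_m}\|_{L^\infty(B_{\rho_m}(x_m))}\to \infty$ and $(x_m,\rho_m)$ nearly achieving the corresponding supremum.

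Define the blow-up
\[
v_m(x)\;:=\;\frac{u_m(x_m+\rho_m x)-P_{x_m,\rho_m}(x_m+\rho_m x)}{\rho_m^{\sigma+\alpha}\,M_m}.
\]
Near-maximality of $(x_m,\rho_m)$ gives $\|v_m\|_{L^\infty(B_1)}\ge 1/2$. A dyadic iteration comparing $P_{x_m,\rho_m}$ with $P_{x_m,2^j\rho_m}$ ---whose differences are polynomials of degree $\le k$ with $L^\infty$-norms controlled by the excess--- yields the global tail bound $|v_m(x)|\le C(1+|x|^{\sigma+\alpha})$ on $\mathbb R^n$; for the far range $\rho_m|x|\gtrsim 1$ one also uses the $C^\alpha(\mathbb R^n)$ bound on $u_m$ together with $M_m\rho_m^{\sigma+\alpha}\gtrsim 1$. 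Moreover, $v_m$ is a viscosity solution of a rescaled concave equation $\widetilde{\mathcal I}_m(v_m,\cdot)=0$ on $B_{1/\rho_m}$, whose kernels still lie in $\mathcal L_0$ with the same $\lambda,\Lambda$, whose $x$-modulus is multiplied by $\rho_m^\alpha\to 0$, and whose inhomogeneity tends to zero.

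By the interior $C^{\alpha'}$ estimates of \cite{CS} for $M^\pm_{\mathcal L_0}$, the sequence $\{v_m\}$ is locally equicontinuous; up to a subsequence $v_m\to v_\infty$ locally uniformly, with the same growth bound. A stability argument ---using concavity to pass limits inside the infimum over $a$--- shows that $v_\infty$ solves a concave, translation-invariant equation elliptic with respect to $\mathcal L_0$ in all of $\mathbb R^n$. The Liouville theorem (proved separately) then forces $v_\infty$ to be a polynomial of degree $\le k$; since the $L^2(B_1)$-orthogonality to such polynomials, built into the definition of $P_{x_m,\rho_m}$, passes to the limit, we obtain $v_\infty\equiv 0$, which contradicts $\|v_\infty\|_{L^\infty(B_1)}\ge 1/2$.

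The main obstacle is the Liouville theorem itself: for kernels in $\mathcal L_2$ one can differentiate the equation, but rough kernels forbid this. I would instead work with second-order increments $w_h(x):=\delta^2 v_\infty(x,he)$. Concavity and translation invariance yield $\mathcal I_\infty\bigl(\tfrac12 v_\infty(\cdot+he)+\tfrac12 v_\infty(\cdot-he)\bigr)\ge 0=\mathcal I_\infty(v_\infty)$, and ellipticity then gives $M^+_{\mathcal L_0}w_h\ge 0$ in $\mathbb R^n$. Scaling the interior Hölder estimate for $w_h$ on balls of radius $R\to\infty$ against the polynomial growth bound of order $\sigma+\alpha<2$ forces $w_h\equiv 0$ for every $h$ and every direction, whence $v_\infty$ is affine; parallel arguments with first-order differences handle the degree-$0$ and degree-$1$ cases. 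Threading the scaling, the growth bounds, and an induction on $\sigma+\alpha$ so that all exponents balance correctly (and so that the same machinery closes both on $\mathbb R^n$ and locally) is the technical heart of the proof.
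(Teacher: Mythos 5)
Your overall skeleton (blow-up plus Liouville theorem) is the same as the paper's, but two of your key steps do not survive scrutiny, and they are precisely where the real difficulty of the theorem lies. First, the objects in your compactness step are not well defined as you use them. Since the growth exponent $\sigma+\alpha$ exceeds the order $\sigma$, a function growing like $|x|^{\sigma+\alpha}$ does not belong to $L^1(\R^n,\omega_\sigma)$, so no operator elliptic with respect to $\mathcal L_0$ can be evaluated on your limit $v_\infty$; the assertion that ``$v_\infty$ solves a concave, translation-invariant equation in all of $\R^n$'' is meaningless. The same problem already occurs at the level of $v_m$: you subtract a best-fit polynomial of degree up to $2$, and $\delta^2 P(x,y)$ grows like $|y|^2$, which is not integrable against $|y|^{-n-\sigma}$ for $\sigma<2$, so $\widetilde{\mathcal I}_m(v_m,\cdot)$ cannot be evaluated and the interior $C^{\alpha'}$ estimates of \cite{CS} cannot be applied to $v_m$ to get equicontinuity (those estimates need two-sided extremal inequalities for functions in $L^\infty$ or $L^1(\omega_\sigma)$). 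The paper avoids both issues in a structural way: it blows up the $C^{\sigma+\alpha'}$ \emph{seminorm} (so compactness comes from Arzel\`a--Ascoli with no equation needed), and it states the Liouville theorem not for solutions of an equation but for functions satisfying the incremental conditions (ii)--(iii) of Theorem \ref{liouville} --- two-sided inequalities for $u(\cdot+h)-u$ and a subsolution inequality for $\ave u(\cdot+h)\mu(h)\,dh-u$ --- which are meaningful under the growth because increments grow like $|x|^{\sigma+\alpha-\beta}$ with $\beta>\alpha$ and because $\delta^2 p(x+h,y)-\delta^2 p(x,y)=0$ for polynomials of degree $\le 2$, so the subtracted polynomial disappears from these conditions.

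Second, your Liouville argument itself fails. Concavity only gives the one-sided inequality $M^+_{\mathcal L_0}\bigl(\delta^2 v(\cdot,he)\bigr)\ge 0$; a global subsolution inequality together with growth control does not force $w_h\equiv 0$, because there is no oscillation-decay (H\"older) estimate for subsolutions alone, and in fact the conclusion \emph{cannot} be $w_h\equiv 0$: in the delicate case $\sigma+\alpha>2$ (which your phrase ``growth of order $\sigma+\alpha<2$'' silently excludes, although it is the case the theorem must cover when $\sigma$ is close to $2$) the limit is a genuine quadratic polynomial, whose second increments are nonzero constants. Closing the Liouville theorem in this range requires an Evans--Krylov-type mechanism: the paper introduces $P$ and $N$ (positive and negative parts of $\delta^2u(x,y)-\delta^2u(0,y)$ integrated against the kernel), uses (ii) to show $P$ and $N$ are comparable, uses (iii) --- the averaged subsolution property, which is the quantitative trace of concavity --- to show that the truncated integrals $v$ and $w$ are subsolutions of $M^+_{\mathcal L_0}$, and then combines the half Harnack of \cite{CS3} with the $L^\varepsilon$ lemma of \cite{CS} to obtain a geometric improvement of $\max P$ on dyadic balls. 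A direct ``scale the interior H\"older estimate for $w_h$'' argument cannot substitute for this; if it could, the Evans--Krylov theorem itself would be trivial. (Your first-order-increment argument does work when $\sigma\le 1$, as the paper notes in its Remark, but that is the easy range.) Finally, your proposal does not address the localization of the $x$-dependence --- the paper's cutoff step producing the term $A_0\|u\|_{C^{\sigma+\alpha}(B_1)}$, absorbed via adimensional interpolation and a scaling trick for large $A_0$ --- which is needed to pass from the global a priori statement to the stated interior estimate.
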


Some comments are in order.
\begin{itemize}
\item Theorem \ref{thm} is stated as an  {\em a priori} estimate: we assume that $u\in C^{\sigma+\alpha} (B_1)$ (with no quantitative control on the norm)  and we obtain a $C^{\sigma+\alpha}$ estimate in $\overline {B_{1/2}}$.
From this a priori estimate, by using the regularization procedure of Section \ref{sec:approx},
we will deduce the existence and uniqueness of a (classical)  $C^{\sigma+\alpha}$ solution to the convex equation $\cI(u,x)=0$ in $B_1$ with given $C^\alpha$ exterior data ---see Theorem \ref{thm2}.

\item As said above, the estimate of Theorem \ref{thm} would be false is we replaced $\|u\|_{C^\alpha(\R^n)}$ in its right hand side
by $\|u\|_{L^\infty(\R^n)}$ ---see Section \ref{SecCounter}.

\item With minor changes in the proofs we can show the dependence of $C$ only on a lower bounds for $\sigma$ and for the gap between $\sigma+\alpha$ and its integer part.
To do it, we can modify the proof  of Proposition \ref{propmain}, adding an additional sequence of orders $\sigma_k\in [\sigma_0,2]$ as in \cite{Sparab}.
Everything  in the paper is prepared so that this can be done (notice in particular that in the statement of the Liouville theorem in Section 3, the exponent $\bar\alpha$ does not depend on $\sigma$).
However, since the proof of Proposition \ref{propmain} is already quite involved as it is, we have chosen not to do this, not to distract the attention from the real
difficulties of the problem.

\end{itemize}

The following corollary provides with a $C^{\sigma+\alpha}$ interior estimate for  solutions $u$ that are merely bounded in $\R^n$ when the kernels are $C^\alpha$ ---recall that for rough kernels this is not possible.
We introduce the class  $\mathcal L_\alpha$ of operators of the form \eqref{operator} with  kernels $K_a$ satisfying \eqref{L0} and \eqref{L2} with $C^2$ replaced by $C^\alpha$ and $\rho^{-n-\sigma-2}$ replaced by $\rho^{-n-\sigma-\alpha}$ ---note that this is consistent  with the definition of $\mathcal L_2$ and $\mathcal L_1$.
In the case of non translation invariant operators we will require the following regularity condition in the variable $y$:
\begin{equation}\label{nontrinv3}
\bigl[ K_a(x,\,\cdot\,) \bigr]_{C^\alpha(\R^n\setminus B_\rho)} \le \Lambda (2-\sigma) \rho^{-n-\sigma-\alpha} \quad \mbox{for all }\rho>0.
\end{equation}

\begin{cor}\label{Lalpha}
Let $\sigma$, $\lambda$, $\Lambda$, $A_0$,  $C_0$, and $\bar \alpha$ as in Theorem \ref{thm}.

Let $\alpha\in(0 ,\bar \alpha)$ such that $\sigma+\alpha$ is not an integer. Assume that  $u\in  C^{\sigma+\alpha}(B_1)\cap L^\infty(\R^n)$ is a solution of
\[ \cI(u,x) = 0 \quad \mbox{in }B_1,\]
with $\cI$ is defined by \eqref{nontrinv} and satisfying \eqref{nontrinv1}, \eqref{nontrinv22}, \eqref{nontrinv2}, and \eqref{nontrinv3}.
Then,
\[ \|u\|_{C^{\sigma+\alpha}(B_{1/2})}\le C\bigl(C_0 + \|u\|_{L^\infty(\R^n)} \bigr),\]
where $C_0$ is the constant from \eqref{nontrinv2} and  $C$ depends only on $n$, $\sigma$,  $\alpha$, $\lambda$,  $\Lambda$,  and $A_0$.
\end{cor}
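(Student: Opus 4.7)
The plan is to reduce Corollary \ref{Lalpha} to Theorem \ref{thm} by replacing $u$ with an extension $\tilde u\in C^\alpha(\R^n)$ that agrees with $u$ on a slightly smaller interior ball, and absorbing the resulting tail terms into the inhomogeneities $c_a$ of a modified equation. The extra hypothesis \eqref{nontrinv3}, namely $C^\alpha$ regularity of $K_a(x,\cdot)$ away from $0$, is precisely what will make this absorption possible.

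First, I would establish the preliminary interior H\"older estimate $\|u\|_{C^\alpha(B_{7/8})}\le C(\|u\|_{L^\infty(\R^n)}+C_0)$. Since $\cI$ is concave and elliptic with respect to $\mathcal L_0$, $u$ satisfies the extremal inequalities $M^-_{\mathcal L_0}u\le C_0$ and $M^+_{\mathcal L_0}u\ge -C_0$ in $B_1$; the standard Caffarelli--Silvestre $C^\gamma$ estimate for $\mathcal L_0$-elliptic equations yields the above with some universal $\gamma>0$, and shrinking $\bar\alpha$ if necessary so that $\bar\alpha\le\gamma$ delivers the desired $C^\alpha$ bound. Let now $\tilde u$ be any $C^\alpha(\R^n)$ extension of $u|_{\overline{B_{7/8}}}$ with $\|\tilde u\|_{C^\alpha(\R^n)}\le C\|u\|_{C^\alpha(B_{7/8})}$, and set $w:=u-\tilde u$, so that $w\equiv 0$ on $B_{7/8}$ and $\|w\|_{L^\infty(\R^n)}\le C\|u\|_{L^\infty(\R^n)}$. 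Since $\delta^2 w(x,y)=0$ whenever $x\in B_{3/4}$ and $|y|<1/8$, the original equation rewrites on $B_{3/4}$ as $\tilde\cI(\tilde u,x)=0$, where
\[
\tilde\cI(v,x):=\inf_{a\in\mathcal A}\bigl(L_a v(x)+\tilde c_a(x)\bigr),\qquad \tilde c_a(x):=c_a(x)+\int_{|y|\ge 1/8}\delta^2 w(x,y)\,K_a(x,y)\,dy.
\]
The operator $\tilde\cI$ is of the form \eqref{nontrinv} with the same kernels (so \eqref{nontrinv1}--\eqref{nontrinv22} still hold) and with modified inhomogeneities $\tilde c_a$.

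The key step is to verify $\|\tilde c_a\|_{C^\alpha(B_{3/4})}\le C(\|u\|_{L^\infty(\R^n)}+C_0)$ uniformly in $a$. The $L^\infty$ bound is immediate from $\|w\|_{L^\infty}$ and \eqref{nontrinv1}. For the $C^\alpha$ seminorm of $x\mapsto L_a w(x)$ I would decompose
\[
L_a w(x)-L_a w(x')=\int\bigl[\delta^2 w(x,y)-\delta^2 w(x',y)\bigr]K_a(x,y)\,dy+\int\delta^2 w(x',y)\bigl[K_a(x,y)-K_a(x',y)\bigr]\,dy.
\]
The second integral is controlled by $A_0|x-x'|^\alpha\|w\|_{L^\infty}$ via \eqref{nontrinv22}; for the first, the changes of variable $z=x\pm y$ and $z=x'\pm y$ in the pieces involving $w(x\pm y)$ convert a $C^\alpha$ difference in $x$ of the shifted argument of $w$ into a $C^\alpha$ difference of the kernel $K_a(x,\cdot)$ in its $y$-variable, which is precisely what \eqref{nontrinv3} controls, with the tail $|z|^{-n-\sigma-\alpha}$ providing absolute integrability. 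Once $\tilde c_a$ is uniformly $C^\alpha$, Theorem \ref{thm} applies to $\tilde u$ (after a harmless rescaling mapping $B_{3/4}$ to $B_1$, and a finite covering of $B_{1/2}$) to give
\[
\|\tilde u\|_{C^{\sigma+\alpha}(B_{1/2})}\le C\bigl(C_0+\|\tilde u\|_{C^\alpha(\R^n)}\bigr)\le C\bigl(C_0+\|u\|_{L^\infty(\R^n)}\bigr),
\]
and since $u=\tilde u$ on $B_{7/8}\supset B_{1/2}$, the conclusion follows. The main obstacle is precisely this uniform $C^\alpha$ bound on $L_a w$: the naive change of variables produces an $x$-dependent domain of integration, so in practice one replaces the sharp indicator $\mathbf 1_{|y|\ge 1/8}$ by a smooth cutoff in $y$, producing an additional lower-order term easily absorbed into $\tilde c_a$.
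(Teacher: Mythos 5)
Your proposal is correct and follows essentially the same route as the paper: the paper multiplies $u$ by a cutoff $\eta$ (rather than taking an extension of $u|_{B_{7/8}}$, a cosmetic difference), rewrites the equation for the globally $C^\alpha$ function with the tail contribution of $(1-\eta)u$ absorbed into new inhomogeneities $\tilde c_a$, bounds $[\tilde c_a]_{C^\alpha}$ using \eqref{nontrinv22} and \eqref{nontrinv3} exactly as in your key step, and then applies Theorem \ref{thm} with a rescaling/covering argument. Your concern about the $x$-dependent domain $\{|y|\ge 1/8\}$ is in fact harmless, since after changing variables the integrand is supported where $w$ lives, i.e.\ outside $B_{7/8}$, which is $x$-independent and keeps the kernel away from its singularity.
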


In order to give an existence and uniqueness result for non translation invariant equations, we need to introduce a regularization procedure based in the one from \cite{CS3}.
We find regularized equations $\cI^\epsilon(u^\epsilon,x)=0$ that have $C^3$ solutions and that converge to $\cI(u,x)=0$ as $\epsilon \searrow 0$ (in the appropriate sense).
A novelty with respect to \cite{CS3} is that for our non translation invariant equations we do not have a comparison principle between viscosity solutions.
Hence, our set up of Perron's method can not rely in the viscosity comparison principle but rather in a property of  ``classical solvability in tiny balls'' for the regularized equations.

Using the regularization procedure and the a priori estimates of  Theorem \ref{thm} and Corollary \ref{Lalpha} we can prove the following existence and uniqueness result.

\begin{thm}\label{thm2}
Let $\sigma$, $\lambda$, $\Lambda$, $A_0$, $C_0$, $\bar \alpha$, $\alpha$ as in Theorem \ref{thm}.

Consider the nonlinear Dirichlet problem \eqref{dir-pb},
where $\cI$, defined by \eqref{nontrinv}, satisfies \eqref{nontrinv1}, \eqref{nontrinv22}, and \eqref{nontrinv2},  and where  $g$ is a bounded function belonging to $C(\R^n)$.
Assume  that either
\begin{itemize}
\item[(a)] $g \in C^{\alpha}(\R^n\setminus B_1)$
\end{itemize}
or
\begin{itemize}
\item[(b)] $\cI$ satisfies \eqref{nontrinv3}.
\end{itemize}
Then, there exists a classical solution $u\in C^{\sigma+\alpha}(B_1)\cap C(\R^n)$ of \eqref{dir-pb}.
As a consequence, the solution $u$ is the unique viscosity solution to \eqref{dir-pb}.

Moreover, this solution $u$ satisfies, in case {\rm (a)}, the estimate
\[ \|u\|_{C^{\sigma+\alpha}(B_{1/2})}\le C\bigl(C_0 + \|g\|_{C^\alpha(\R^n\setminus B_1)} \bigr),\]
and, in case {\rm (b)}, the estimate
\[ \|u\|_{C^{\sigma+\alpha}(B_{1/2})}\le C\bigl(C_0 + \|g\|_{L^\infty(\R^n)} \bigr),\]
where $C_0$ is the constant from \eqref{nontrinv2} and  $C$ depends only on $n$, $\sigma$,  $\alpha$, $\lambda$,  $\Lambda$, and  $A_0$.
\end{thm}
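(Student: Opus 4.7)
The plan is to deduce Theorem \ref{thm2} from the a priori estimates of Theorem \ref{thm} and Corollary \ref{Lalpha} by an approximation scheme. First I would regularize the operator $\cI$, following \cite{CS3} but carefully in the non translation invariant setting, to produce a family of operators $\cI^\epsilon$ that are still concave, still elliptic with respect to $\mathcal L_0$ with the same constants, still of the form \eqref{nontrinv} with estimates \eqref{nontrinv1}--\eqref{nontrinv2} (and, under hypothesis (b), also \eqref{nontrinv3}), and that converge to $\cI$ locally uniformly on $C^{1,1}$ test functions. The key extra property built into $\cI^\epsilon$ is \emph{classical solvability in tiny balls}: for every sufficiently small ball $B_r(x_0)\subset B_1$ and every smooth exterior data the Dirichlet problem for $\cI^\epsilon$ in $B_r(x_0)$ admits a $C^3$ solution, as is required to run Perron's method without viscosity comparison.

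Next I would solve the regularized Dirichlet problem $\cI^\epsilon(u^\epsilon,x)=0$ in $B_1$ with $u^\epsilon=g$ in $\R^n\setminus B_1$ by Perron's method. Since we do not have a viscosity comparison principle for the $x$-dependent concave equation, the classical Perron argument must be replaced by the one of \cite{CS3}: take the supremum of all viscosity subsolutions lying below a supersolution, and then use classical solvability in tiny balls to locally replace the candidate by a classical solution, which forces it to be simultaneously a sub- and supersolution. Sub- and supersolution barriers at $\partial B_1$ are provided by the standard $|x|^s$-type barriers for $M^\pm_{\mathcal L_0}$, which guarantee continuity up to the boundary and the correct matching of the exterior data. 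The Pucci weak maximum principle in $\mathcal L_0$ ensures a uniform $L^\infty$ bound on $u^\epsilon$ and uniform modulus of continuity on $\overline{B_1}$ in terms of $\|g\|_{L^\infty}$ and the boundary modulus of $g$.

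Having $u^\epsilon$, I would apply Theorem \ref{thm} in case (a) and Corollary \ref{Lalpha} in case (b) to obtain a uniform $C^{\sigma+\alpha}(\overline{B_{1/2}})$ estimate controlled by $C_0+\|g\|_{C^\alpha(\R^n\setminus B_1)}$ or by $C_0+\|g\|_{L^\infty(\R^n)}$, respectively. Combined with the uniform boundary continuity, Arzel\`a--Ascoli yields a subsequential limit $u\in C^{\sigma+\alpha}(\overline{B_{1/2}})\cap C(\R^n)$. The stability of viscosity solutions under locally uniform convergence of the operators (which applies because $\cI^\epsilon\to \cI$ uniformly on $C^{1,1}$ test functions and $u^\epsilon\to u$ locally uniformly in $\R^n$) gives $\cI(u,x)=0$ in the viscosity sense, and the uniform $C^{\sigma+\alpha}$ estimate, valid at every interior point by the same argument in balls $B_{1/2}(x_0)\subset B_1$, upgrades $u$ to a classical solution satisfying the stated estimates. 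Uniqueness follows by a Pucci comparison for $w=u_1-u_2$: concavity of $\cI$ in the nonlocal increments gives $M^+_{\mathcal L_0}w\ge 0\ge M^-_{\mathcal L_0}w$ in $B_1$ with $w=0$ outside $B_1$, and the weak maximum principle for $M^\pm_{\mathcal L_0}$ forces $w\equiv 0$.

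The main obstacle, as advertised by the author, is not the a priori estimates but the construction of $u^\epsilon$: without a viscosity comparison principle one must replace it by classical solvability in tiny balls for the regularized equation, which in turn forces the approximation $\cI^\epsilon$ to be designed with enough regularity (in particular a concave fully nonlinear structure admitting $C^3$ solutions locally) while preserving the ellipticity and $C^\alpha$ dependence on $x$ in a form compatible with \eqref{nontrinv1}--\eqref{nontrinv22}. Once this regularization and its tiny-ball solvability are in hand, everything else (Perron, barriers, stability, limit passage, Pucci-based uniqueness) is routine.
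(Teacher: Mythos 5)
Your overall scheme is the paper's: regularize \`a la \cite{CS3}, run Perron's method on the property of classical solvability in tiny balls (since viscosity comparison is unavailable for $x$-dependent equations), apply the a priori estimates uniformly in $\epsilon$, pass to the limit by stability, and get uniqueness from comparison. However, in case (a) there is a genuine gap at the point where you invoke Theorem \ref{thm}. The right-hand side of Theorem \ref{thm} is $C\bigl(C_0+\|u\|_{C^\alpha(\R^n)}\bigr)$, not $C\bigl(C_0+\|g\|_{C^\alpha(\R^n\setminus B_1)}\bigr)$; to apply it to $u^\epsilon$ and obtain the estimate you claim, you must first prove that $u^\epsilon$ is globally $C^\alpha(\R^n)$, i.e.\ $C^\alpha$ up to $\partial B_1$ with \emph{the same exponent} $\alpha$ as the exterior data, and with $\|u^\epsilon\|_{C^\alpha(\R^n)}\le C\bigl(\|g\|_{C^\alpha(\R^n\setminus B_1)}+C_0\bigr)$ uniformly in $\epsilon$. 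Your ``uniform modulus of continuity on $\overline{B_1}$'' from the barriers and from \cite[Theorem 3.3]{CS2} does not give this: the off-the-shelf boundary regularity only yields some H\"older exponent $\alpha'$ depending on ellipticity, with no guarantee that $\alpha'\ge\alpha$, and in general one does lose exponent at the boundary (already for the Laplacian, Lipschitz data need not give Lipschitz solutions). The paper devotes Proposition \ref{lemcosa} precisely to this step: a quantitative ``no loss of exponent'' boundary estimate valid because $\alpha<\bar\alpha$ is small, proved by a blow-up/compactness argument based on a new Liouville-type lemma in half spaces (Lemma \ref{lemliouville}), itself relying on barriers of the form ${\rm dist}(x,\R^n\setminus H)^p$ and the interior estimates of \cite{CS2}. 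Without this ingredient your case (a) estimate does not follow. (Case (b) is fine: there the $L^\infty$ bound from trivial barriers is all that Corollary \ref{Lalpha} needs.)

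A smaller point concerns uniqueness. You propose to set $w=u_1-u_2$ for two \emph{viscosity} solutions and derive $M^+_{\mathcal L_0}w\ge 0\ge M^-_{\mathcal L_0}w$; but subtracting two viscosity solutions of an $x$-dependent equation to get Pucci inequalities in the viscosity sense is exactly the comparison machinery the paper says is unavailable here. What does work, and what the paper uses, is that once the constructed solution $u$ is classical one can compare it with an arbitrary viscosity solution (the classical function can be absorbed into the test function), so uniqueness holds among viscosity solutions \emph{as a consequence of} having built a classical solution. State it that way and the argument closes.
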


A key idea in our proofs is to deduce the interior regularity results from a Liouville theorem, by using a blow-up (compactness) argument.
As a general type of proof in PDEs,  proving regularity from a Liouville theorem is a well-known strategy that has been used in a large variety of problems.
However, to our knowledge it had not been applied to fully nonlinear elliptic equations until recently by the author in \cite{Sparab} ---a reason explaining this
may be that for second order equations these type of argument gives nothing new with respect to classical perturbative methods.

In the context of nonlocal equations, this method has two main advantages.
First, the Liouville theorem approach allows us to work with solutions in the whole space ---rather than only in $B_1$, say.
This makes possible to deal with rough kernels: we are not troubled the sensitivity to the exterior data because ``there is no exterior data''.
Second, since we blow up the equation, we typically retrieve a translation invariant limiting equation even in when the original equation is not.
This is what allows us to obtain $C^{\sigma+\alpha}$ Schauder estimates  for non translation invariant equations.
Similarly, we could deal with certain ``lower order terms'' which disappear after blow-up.
For instance, our proof immediately applies to the case of truncated kernels.

The outlines of our strategy of proof are the following.
First, we prove a Liouville theorem for global solutions satisfying a certain geometric growth control at infinity.
To do it,  we essentially apply a  regularity proof to a global solution. Using the scaling of the equation and the growth control, we obtain seminorm estimates in every ball $B_R$ that, letting $R\to \infty$, imply that the global solution is a polynomial.
Second, with the new Liouville theorem at hand,  we  use a blow up contradiction argument to deduce a interior regularity estimate for solutions {\em only} in $B_1$.

The faster is the growth allowed  in the Liouville theorem, the better the regularity we will prove with it.
For instance, to prove $C^{1+\gamma}$ regularity for fully nonlinear elliptic equations
of order $\sigma>1+\gamma$ the required Liouville theorem states:  ``if $u$ is a global solution and $|u(x)|\le 1+|x|^{1+\gamma}$ for all $x\in \R^n$, then $u$ is affine''.
This Liouville theorem is quite easy to prove using the H\"older estimates from \cite{CS}.

Similarly, to obtain $C^{\sigma+\alpha}$ estimates we will need a theorem stating: ``if $u$ is a global solution to a concave equation and $|u(x)|\le 1+|x|^{\sigma+\alpha}$ for all $x\in \R^n$ then $u$ is a polynomial of degree two'' (here we are thinking on the most delicate case $\sigma+\alpha>2$).
The problem now is that it is not so clear how to translate this informal statement into a rigorous one.
The most evident difficulty is that for functions growing at infinity like $|x|^{\sigma+\alpha}$ our equation of order $\sigma$ is meaningless, since the operators cannot be computed at such functions. They grow too fast and they are not integrable against the tails of the kernel  decaying like $|y|^{-n-\sigma}$.

An important point in the paper is to find an appropriate statement for this Liouville theorem, which is given in Theorem \ref{liouville} in Section 3.
Since the equation is meaningless due to the fast growth, Theorem \ref{liouville} is not stated for viscosity solutions to some equation but rather for  functions satisfying the three conditions (i)-(iii) in its statement.
Unlike the equation, these three conditions make sense under the growth $|x|^{\sigma+\alpha}$
and they summarize  the relevant information of ``being solution'' to some concave fully nonlinear equation.

As said above, the ideas of the present paper are quite flexible and can be applied to different situations.
In the beginning of this introduction we have referred to the application to boundary regularity by Ros-Oton and the author \cite{RSboundary}.
In more detail, the main result in \cite{RSboundary} states that if $u\in L^\infty(\R^n)$ is a solution of $\I u = 0$ in $B_1^+$ and $u= 0$ in $B_1^-$, with $\I$ elliptic with respect to the class of homogeneous kernels
\[\left\{  \frac{a(y/|y|)}{|y|^{n+2s}}, \ a(y) = a(-y), \ \lambda \le a\le \Lambda, \  \|a\|_{C^{1+\alpha-s}(S^{n-1})} \le \Lambda \right\},\]
then
\[u(x) - (p\cdot x + b) (x_n)_+^s  = o(|x|^{1+s+\alpha}) \quad \mbox{in } B_1^+\mbox{ for }x \sim 0,\]
for some $p\in \R^n$ and $b\in \R$ bounded and for some $\alpha>0$ small.
This result contains in the limit $s\nearrow1$ the classical boundary regularity theory of Krylov for fully nonlinear elliptic equation of second order.
Following the method from \cite{Sparab} and the present paper, the result of \cite{RSboundary} is obtained using blow-up and compactness from a Liouville theorem ---which, in the case of boundary
regularity is for solutions in $\R^n_+$ growing less than  $|x|^{1+s+\alpha}$ as $x\rightarrow \infty$.
Since $1+s+\alpha$ exceeds $2s$ some of the difficulties that we meet in the boundary regularity result are similar to the ones of this paper, and we can solve them by suitably adapting the ideas of this paper to the boundary regularity context.

Another result in which the methods of the present paper have been very useful is the linear regularity theory for the infinitesimal generator of a general symmetric stable L\'evy process, also by Ros-Oton and the author \cite{RSLevy}.
A main result in \cite{RSLevy} is a interior estimate for all equations of the form
\[  (1-s)\int_{\R^n} \bigl(u(x+y)+u(x-y)-2u(x)\bigr) \frac{d\mu(y/|y|)}{|y|^{n+2s}}= f(x) \quad \mbox{in }B_1,\]
where $f\in C^\alpha(B_1)$ and where $\mu$ is a probability measure on $S^{n-1}$. Solutions $u\in C^\alpha(\R^n)$ to the previous equation are shown to belong to $C^{2s+\alpha}(B_{1/2})$ provided that
the measure $\mu$ is  {\em not} supported on some hyperplane (intersected with $S^{n-1}$). Clearly this is also a necessary condition for regularity, because when $\mu$ is supported on some hyperplane the equation will not regularize in the direction orthogonal to the hyperplane.

After finishing a previous version (published online as a preprint) of this paper, Luis Silvestre let us know about the preprint of Tianling Jin and Jingang Xiong \cite{JX}, where they prove Schauder estimates ($C^{\sigma+\alpha}$) for $L^\infty(\R^n)$ solutions to
concave fully nonlinear equations with smooth kernels in $\mathcal L_2$ and  $C^{\alpha}$ dependence on $x$.
Our  Corollary \ref{Lalpha}  applies  in particular to this situation since $\mathcal L_\alpha \subset \mathcal L_2$.
Their results and ours are independent, with different proofs, and both preprints were published online the same day.
As explained in this introduction, Schauder estimates for non translation invariant equations were a main open issue in nonlocal equations and thus believe that it is of interest to have now two different proofs of these estimates in the case of smooth kernels.

The paper is organized as follows. In Section 2 we state and prove the Liouville theorem that serves to obtain $C^{\sigma+\alpha}$ regularity.
In Section 3 we state and prove  Proposition \ref{propmain} (containing the  compactness argument) and use it to prove  Theorems \ref{thm} and Corollary \ref{Lalpha}.
The regularization procedure and the proof of Theorem \ref{thm2} are given in Section \ref{sec:approx}.
Finally, in Section \ref{SecCounter} we give the counterexamples to $C^{\sigma+\alpha}$  interior regularity under the mere assumption of bounded complement data.

Throughout the paper we will use the following conventions:

\begin{itemize}
\item  Given $\beta >0$ which is not an integer we will denote as $C^\beta$ the space $C^{k,\beta'}$ where $k=\lfloor\beta\rfloor$ is the floor of $\beta$ and $\beta'=\beta-k $.

\item  The square brackets $[\,\cdot\,]$ will stand for seminorms. For example, when $\sigma+\alpha\in(2,3)$, $[u]_{C^{\sigma+\alpha}(B_1)}$ will denote the $C^{\sigma+\alpha-2}$ H\"older seminorm of $D^2u$  .

\item The constants $\lambda$ and $\Lambda$ are sometimes referred to as the ``ellipticity constants''.

\end{itemize}

\section{The Liouville theorem}

In this section we state and prove the Liouville theorem that serves to prove $C^{\sigma+\alpha}$ interior regularity.
\begin{thm}\label{liouville}
Let $\sigma_0\in(0,2)$ and $\sigma\in [\sigma_0,2)$. There is $\bar \alpha>0$ depending only on $n$, $\sigma_0$, and ellipticity constants such that the following statement holds.

Let $\alpha$ and $\alpha'$ be constants satisfying $0<\alpha'<\alpha <\bar\alpha$.
Assume that $u\in C^{\sigma+{\alpha'}}_{\rm loc}(\R^n)$ satisfies the following properties.

\begin{enumerate}
\item[(i)] There exists $C_1>0$ such that for all $\beta\in[0,\sigma+\alpha']$ and for all $R\ge1$ we have
\[  [u]_{C^{\beta}(B_R)}\le C_1 R^{\sigma+ \alpha-\beta}.\]

\item[(ii)] For all $h\in \R^n$ we have
\[ M^-_{\mathcal L_0} \bigr( u(\,\cdot\,+h) -u \bigl)\le 0 \le M^+_{\mathcal L_0}\bigr( u(\,\cdot\,+h) -u \bigl) \quad \mbox{in }\R^n.\]

\item[(iii)] For every nonegative $\mu\in L^1(\R^n)$ with compact support and  $\int_{\R^n} \mu(h)\, dh =1$, we have
\[ M^+_{\mathcal L_0}  \left(\ave u(\,\cdot\,+h) \mu(h)\,dh- u\right)\ge 0\quad \mbox{in }\R^n.\]
\end{enumerate}

Then, $u(x)$ is a polynomial of degree $\nu$,  where $\nu$ is the floor (or integer part) of $\sigma+\alpha$.
\end{thm}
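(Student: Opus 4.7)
The plan is to iteratively bootstrap the regularity of $u$ by combining the Caffarelli--Silvestre $C^{\bar\alpha}$ H\"older estimate for the Pucci extremal operators in $\mathcal L_0$ with the scaling built into the growth hypothesis (i). Each application of the H\"older estimate to a suitable incremental quotient of $u$ produces a gain of $\bar\alpha$ in regularity while costing the same amount in the power of $R$, so after finitely many iterations the resulting exponent of $R$ becomes strictly negative; letting $R\to\infty$ then forces the corresponding seminorm of $u$ to vanish, which means $u$ must be a polynomial of degree $\nu=\lfloor\sigma+\alpha\rfloor$.

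Concretely, for any $h\in\R^n$ the first difference $v_h(x):=u(x+h)-u(x)$ is, by (ii), a Pucci sandwich solution in all of $\R^n$. The tailed $C^{\bar\alpha}$ estimate of Caffarelli--Silvestre, applied at scale $R$ and scaled back, yields
\[
 [v_h]_{C^{\bar\alpha}(B_{R/2})} \le C\, R^{-\bar\alpha}\Bigl(\|v_h\|_{L^\infty(B_R)} + \text{(tail of } v_h\text{ against }(2-\sigma)|y|^{-n-\sigma})\Bigr).
\]
Using (i) with an interpolation exponent $\gamma\in(\alpha,\sigma+\alpha']$ gives $\|v_h\|_{L^\infty(B_\rho)}\le C|h|^\gamma\rho^{\sigma+\alpha-\gamma}$ for every $\rho\ge|h|$, a rate that is precisely integrable against the kernel tail thanks to the choice $\gamma>\alpha$. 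The result is $[v_h]_{C^{\bar\alpha}(B_{R/2})}\le C|h|^\gamma R^{\sigma+\alpha-\gamma-\bar\alpha}$; optimizing in $|h|$ and translating incremental quotient bounds into H\"older seminorms, one obtains $[u]_{C^{\gamma+\bar\alpha}(B_R)}\le C R^{\sigma+\alpha-\gamma-\bar\alpha}$, i.e.\ a gain of $\bar\alpha$ in the regularity exponent $\beta$ appearing in (i). Iterating, with $\bar\alpha$ chosen small enough that no single step crosses an integer, we reach after finitely many steps a bound $[u]_{C^\beta(B_R)}\le C R^{-\epsilon}$ for some $\beta\in(\sigma+\alpha,\nu+1)$ and some $\epsilon>0$; letting $R\to\infty$ forces $[u]_{C^\beta(\R^n)}=0$, so $u$ is a polynomial of degree $\lfloor\beta\rfloor=\nu$.

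The main obstacle is handling the iteration across the integer thresholds, and especially the threshold $\beta=2$ that appears when $\nu=2$ (the case $\sigma+\alpha>2$). At each threshold one must pass from incremental quotients to honest derivatives: e.g.\ $\partial_i u$ inherits the Pucci sandwich property from (ii) by taking $h=te_i$ and letting $t\to 0$, using the $C^{\sigma+\alpha'}_{\rm loc}$ regularity of $u$. When $\nu=2$ the last step requires a H\"older estimate on $D^2 u$, and here hypothesis (iii) plays the role that the Evans--Krylov theorem plays in the smooth-kernel setting: for fixed $h$, the second symmetric difference $\delta^2u(\cdot,h)$ is an $M^+_{\mathcal L_0}$-subsolution by (iii) applied to $\mu=\tfrac12(\delta_h+\delta_{-h})$, while the matching one-sided bound comes from (ii) applied to $v_h$ and $v_{-h}$. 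Combining these two bounds with the growth control (i) and the universal $C^{\bar\alpha}$ estimate produces a H\"older bound on $\delta^2_hu/|h|^2$, hence on $D^2u$, consistent with the required $R$-scaling. Carrying this step out without transferring derivatives onto the (non-smooth) kernel, and keeping $\bar\alpha$ independent of $\sigma$, is the principal technical difficulty; the global nature of the Liouville setup is precisely what makes it feasible.
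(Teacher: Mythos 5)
Your bootstrap runs into a genuine gap precisely at the point the theorem is really about. Iterating the $C^{\bar\alpha}$ estimate of Caffarelli--Silvestre on the first differences $v_h=u(\cdot+h)-u$, which are two-sided (sandwich) solutions by (ii), can only exploit increments of $u$ at Lipschitz scale: a single first difference satisfies $\|v_h\|_{L^\infty}\lesssim |h|^{\beta}$ only for $\beta\le\min\{1,\sigma+\alpha'\}$, so the iteration stalls at regularity of order $1+\bar\alpha$. This is exactly why the paper's own remark says that the increment-iteration argument suffices only when $\sigma\le 1$. To go further one would need increments of derivatives, $\partial_i u(\cdot+h)-\partial_i u$, or second differences $\delta^2_h u$, to be sandwich solutions --- and they are not: (iii) (applied to $\mu\approx\frac12(\delta_h+\delta_{-h})$) makes $\delta^2_h u$ only a subsolution of $M^+_{\mathcal L_0}$, and (ii) gives nothing in the other direction for this function (a difference of two sandwich solutions is not a sandwich solution). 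Your final paragraph asserts that combining the one-sided bound from (iii), the bounds from (ii) on $v_{\pm h}$, the growth (i) and ``the universal $C^{\bar\alpha}$ estimate'' yields a H\"older bound on $\delta^2_h u/|h|^2$ and hence on $D^2u$; but the Krylov--Safonov-type $C^{\bar\alpha}$ estimate requires sub- \emph{and} supersolution properties of the \emph{same} function, so this step is unjustified --- it is in fact the entire content of the (nonlocal) Evans--Krylov theorem, not a corollary of it.

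The paper's proof handles this by a different mechanism, with no bootstrap through intermediate H\"older exponents. One sets $P(x)=\int(\delta^2u(x,y)-\delta^2u(0,y))^+\frac{2-\sigma}{|y|^{n+\sigma}}dy$ and $N(x)$ its negative counterpart; (ii) gives $\frac{\lambda}{\Lambda}P\le N\le\frac{\Lambda}{\lambda}P$, and (iii) (after approximating $\chi_A(y)(2-\sigma)|y|^{-n-\sigma}$ by compactly supported $L^1$ densities and using stability) shows that the partial integrals $v$ and $w$ over the set $A=\{\delta^2u(x_0,\cdot)-\delta^2u(0,\cdot)>0\}$ and its complement are $M^+_{\mathcal L_0}$-subsolutions. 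An oscillation-improvement dichotomy then runs: if $v$ were close to its maximum on most of $B_1$, the half-Harnack (local maximum principle, Theorem 5.1 of \cite{CS3}) applied to a truncation of $w$ contradicts $w(0)=0$; hence $\{v\le 1-\bar\theta\}$ occupies a fixed fraction of $B_1$, and the $L^\varepsilon$ lemma (Theorem 10.4 of \cite{CS}) applied to the supersolution $(1-v)^+$ gives $P\le 1-\theta$ in $B_{1/2}$. Iterating dyadically yields $P(x)\le C|x|^{\bar\alpha}$, which is converted into $[u]_{C^{\sigma+\bar\alpha}(B_R)}\le CR^{\alpha-\bar\alpha}$ via the identity $P-N=c(-\Delta)^{\sigma/2}(u(\cdot+h)-u)(0)$ and the Riesz potential estimate; only then does one send $R\to\infty$ using $\alpha<\bar\alpha$ (this last step of yours is correct in spirit). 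Without an argument of this measure-theoretic, concavity-driven type, your proposal does not prove the theorem in the relevant range $\sigma+\alpha>1+\bar\alpha$, and in particular not when $\nu=2$.
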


In (iii), and in the rest of the paper, the symbol $\ave$ means average (integral with respect to the measure total mass one  $\mu(h)\, dh$). We write $\ave$ even if we could equivalently write $\int$ as a reminder of the assumption $\int_{\R^n} \mu(h)\, dh =1$.

Throughout the paper, $\alpha'$ will be a constant in $(0,\alpha)$.
We will sometimes require, in addition, that $\nu <\sigma+\alpha'$ where $\nu = \lfloor \sigma+\alpha\rfloor$.
An $\alpha'$ satisfying both conditions exists when $\sigma+\alpha$ is not an integer.
In all the paper, one can think of $\alpha'$ as given explicitly by
\begin{equation}\label{alpha'}
\alpha' := \max\left\{ \frac{\alpha}{2}, \frac{\sigma+\alpha+ \nu}{2}-\sigma\right \}.
\end{equation}

The statement of Theorem \ref{liouville} requires some more detailled explanation.  Note that the  $L^\infty$ growth condition in (i) $\|u\|_{L^\infty(B_R)}\le CR^{\sigma+\alpha}$ is too loose for $M^+_{\mathcal L_0} u(x)$ and $M^-_{\mathcal L_0}u(x)$ to be defined, even though  $u\in C^{\sigma+{\alpha'}}$.

However, the control in (i) implies that for every $\beta\in \bigl(0,\min\{1,\sigma+\alpha'\}\bigr)$ we have
\begin{equation}\label{controlbeta}
\|u(\,\cdot\,+h) -u\|_{L^\infty(B_R)}\le C|h|^\beta R^{\alpha+\sigma-\beta}.
\end{equation}

Therefore, taking $\beta\in \bigl(\alpha, \min\{1,\sigma+\alpha'\}\bigr)$ in \eqref{controlbeta} we find that the  $C^{\sigma+{\alpha'}}$ function  $u(\,\cdot\,+h) -u$ belongs to $L^1(\R^n, \omega_\sigma)$ ---here and throughout the paper $\omega_\sigma$ denotes the weight 
\[\omega_\sigma(y) = (1+|y|)^{-n-\sigma}.\]
Thus $M^+_{\mathcal L_0}$ and $M^-_{\mathcal L_0}$ of  $u(\,\cdot\,+h) -u$ are well defined pointwise, and
the inequalities in (ii) are meaningful in the classical sense.

Likewise, the function  $\ave u(\,\cdot\,+h) \mu(h)\,dh- u\,$ is $C^{\sigma+{\alpha'}}$ and belongs to  $L^1(\R^n, \omega_\sigma)$  ---recall that when $\mu$ has compact support. Thus,  the inequality in assumption (iii) is ---also in this case--- meaningful in the classical sense.

\begin{rem}
When $\sigma\le 1$ the proof of this Liouville theorem simplifies significantly and the assumption (iii) is not needed.
In this case, the theorem follows from iterating  the $C^\gamma(\overline{B_{1/2}})$ estimate in \cite[Theorem 12.1]{CS} for solutions $v\in L^\infty(B_1)\cap L^1(\R^n, \omega_\sigma)$ to the two viscosity inequalities $M^-_{\mathcal L_0} v \le 0 \le M^+_{\mathcal L_0} v $ in $B_1$).
Applying this $C^\gamma$ estimate to incremental quotients of $u$ at every scale and iterating  (like in the proof of $C^{1+ \gamma}$ regularity for fully nonlinear equations) we obtain
\[  [u]_{C^{1+  \gamma}( B_R)} \le CR^{\alpha+\sigma-1-\gamma}.\]
Then, since $\sigma \le 1$ the conclusion of the theorem holds taking $R\nearrow \infty$ provided that $\alpha<\gamma$.
For a very similar argument in the parabolic setting see \cite{Sparab}.
\end{rem}

\begin{proof}[Proof of Theorem \ref{liouville}]
The result for all $C_1>0$ trivially follows from the result for $C_1=1$. Thus, in all the proof we assume that $C_1=1$.

In this proof we follow to a large extend the exposition in the lecture notes of Silvestre \cite{SLectureNotes}, where an insightful sketchy version of the $C^{\sigma+\alpha}$ regularity proof from \cite{CS3} is given.
In the present Liouville theorem setting, however,  the same ``simplified'' argument (with few modifications) provides with a short complete proof. This is because since the equation holds in all the space there is no need to truncate functions, avoiding many technical complications.

We want to prove that for some $\bar \alpha$
depending only on $n$, $\sigma_0$, $\lambda$, and $\Lambda$  (but {\em not} on ${\alpha'}$ nor $\alpha$) we have
\begin{equation}\label{last}
[u]_{C^{\sigma+\bar \alpha}(B_R)} \le CR^{\alpha-\bar\alpha},
\end{equation}
with $C$ independent of $R$.
Once this will be proved, since  $\alpha<\bar\alpha$,  sending $R$ to infinity the theorem will follow.

Let us define
\[P(x):=  \int_{\R^n}\bigl(\delta^2u(x,y)-\delta^2u(0,y)\bigr)^+\, \frac{2-\sigma}{|y|^{n+\sigma}}\,dy \]
and
\[N(x):=  \int_{\R^n}\bigl(\delta^2u(x,y)-\delta^2u(0,y)\bigr)^- \,\frac{2-\sigma}{ |y|^{n+\sigma}}\,dy .\]
Using (i) ---recall that $C_1=1$--- we find that $P$ and $N$  are $C^{\alpha'}$ and satisfy
\begin{equation}\label{growthPN0}
  0\le  P \le CR^{\alpha}\quad \mbox{and}\quad   0\le  N \le CR^{\alpha} \quad \mbox{in }B_R,
\end{equation}
for all $R\ge 1$, with $C$ universal (meaning that it depends only on $n$, $\sigma_0$, $\lambda$, and $\Lambda$).
Indeed, let us prove \eqref{growthPN0} when  $\nu =\lfloor \sigma+\alpha\rfloor =2$ (the cases $\nu=0$ and $\nu=1$ are very similar).
Using that $[u]_{C^{\sigma+\alpha'}(B_2)}\le 1$ and that
$[u]_{C^\beta(B_R)}\le R^{\sigma+\alpha-\beta}$ we obtain, taking $\beta\in \bigl(\alpha, \min\{1,\sigma+\alpha'\}\bigr)$, that
\[ \bigl|\delta^2u (x, y)- \delta^2u (x', y) \bigr| \le
\begin{cases}
 C|y|^2 d^{\sigma+\alpha'-2}  \ &\quad \mbox{for } y\in B_d\\
 Cd^2 |y|^{\sigma+\alpha'-2} &\quad \mbox{for } y \in B_{1/2}\setminus B_d\\
 Cd^\beta R^{\sigma+\alpha-\beta} &\quad \mbox{for } y \in B_R\setminus B_{1/2}.
 \end{cases}
 \]
Therefore,
\begin{equation}\label{1234}
\begin{split}
|P(x)-P(x')| &\le  \int_{\R^n} \bigl|\delta^2u(x,y)-\delta^2u(x',y)\bigr|\, \frac{2-\sigma}{|y|^{n+\sigma}}\,dy
\\
&\le  C\int_{\R^n}\bigl( |y|^2 d^{\sigma+\alpha'-2} \chi_{B_d}(y) + d^2 |y|^{\sigma+\alpha'-2}  \chi_{B_{1/2}\setminus B_d }(y)\ +
\\
&\hspace{38mm} + d^\beta |y|^{\sigma+\alpha-\beta}\chi_{\R^n\setminus B_{1/2}}(y) \bigl)\, \frac{2-\sigma}{|y|^{n+\sigma}}\,dy
\\
&\le C (d^{\alpha'}+d^\beta)\le Cd^{\alpha'}.
\end{split}
\end{equation}

This shows that $P\in C^{\alpha'}(B_1)$. Taking $x'=0$ in \eqref{1234} we obtain the bound by above for $P$ in $B_1$ of \eqref{growthPN0}. To prove the same bound  in $B_R$ for all $R\ge 1$ we  use rescaling.
 Given $\rho>0$ we consider the rescaled function
 \[\bar u(x) = \rho^{-\sigma-\alpha} u(\rho x)\]
It is immediate to verify that $\bar u$ satisfies the same assumptions (i), (ii), and (iii) as $u$. In particular the constant $C_1$ in (i) for $\bar u$ is the same as that of
$u$, that is $C_1=1$. Then, as we have proved before for $u$, we have
\[0 \le  \int_{\R^n}\bigl(\delta^2\bar u(x,y)-\delta^2\bar u(0,y)\bigr)^+\, \frac{2-\sigma}{|y|^{n+\sigma}}\,dy \le C \quad \mbox{for all }x\in B_1.\]
Translating this from $\bar u$ to $u$ we obtain that  $0\le P\le C\rho^\alpha$ in $B_\rho$ and hence letting $\rho=R$ we obtain the bound for $P$ in $B_R$ of \eqref{growthPN0}.
The bounds for $N$ in \eqref{growthPN} are obtained likewise.

Next, dividing $u$ by the universal constant $C$ in \eqref{growthPN0} we may assume
\begin{equation} \label{growthPN}
  0\le  P \le 2^{k\alpha} \le 2^{k\bar \alpha} \quad \mbox{in } B_{2^k}(0) \quad \mbox{for all }k\ge0.
\end{equation}
In order to show that $u \in C^{\sigma+\bar \alpha}$  we will prove that
\begin{equation} \label{dyadicimprovement}
  0\le  P \le 2^{k\bar \alpha} \quad \mbox{in } B_{2^k}(0) \quad \mbox{for all }k\le-1.
\end{equation}
This estimate on $P$ is proved though an iterative improvement on the maximum of $P$ on dyadic balls.

Indeed, our goal is to improve the  bound from above $P\le1$ in  $B_1$ to  $P\le 1-\theta$ in $B_{1/2}$, for some $\theta>0$.
After doing this, we will immediately have \eqref{dyadicimprovement} for all $k\ge1$ for some $\bar\alpha$ small (related  to $\theta$) just by scaling and iterating.
Let us thus concentrate in proving  $P\le 1-\theta$ in $B_{1/2}$.

Let $x_0 \in B_{1/2}$ be such that $P(x_0) = \max_{B_{1/2}} P$. Define the set
\[ A = \{ y : (u(x_0+y)+u(x_0-y)-2u(x_0) - u(y) - u(-y) + 2u(0)) > 0 \}.\]
 In particular we have
\begin{align*}
P(x_0) = \int_{A} \bigl(\delta^2u(x_0,y)-\delta^2u(0,y)\bigr) \frac{2-\sigma}{|y|^{n+\sigma}} dy, \\
N(x_0) = \int_{\R^n \setminus A} \bigl(\delta^2u(x_0,y)-\delta^2u(0,y)\bigr) \frac{2-\sigma}{|y|^{n+\sigma}} dy.
\end{align*}

We will take  $\bar \alpha$ very small (depending on $\delta_0$ below) so that \eqref{growthPN} implies
\begin{equation} \label{eeeerror}
\int_{\R^n} \bigl(P(y)-1\bigr)^+ \frac{2-\sigma}{|y|^{n+\sigma}} \,dy   \le \delta_0.
\end{equation}

We define the function $v$ as
\[ v(x) := \int_{A} \bigl(\delta^2u(x,y)-\delta^2u(0,y)\bigr) \frac{2-\sigma}{|y|^{n+\sigma}} dy. \]
Note that in particular  $P(x_0)=v(x_0)$.
Let
\begin{equation}\label{bartheta}
\bar \theta = \frac{\lambda}{4\Lambda}
\end{equation}
and define the set
\[\boldsymbol D := \{ x\in B_1\, :\, v \ge (1-\bar \theta)\}.\]

Let us show that, for  $\eta>0$ small enough we have
\begin{equation}\label{Asmall}
|\boldsymbol D|\le (1-\eta) |B_1|.
\end{equation}

Assume by contradiction that $|\boldsymbol D|\ge (1-\eta) |B_1|$ for $\eta$ small to be chosen later.  That is,  $v$ is larger than $(1-\bar\theta)$ in most of  $B_1$.
In that case we consider the function $w$ defined as $v$ but replacing $A$ by $\R^n\setminus A$.
\[ w(x) := \int_{\R^n \setminus A} \bigl(\delta^2u(x,y)-\delta^2u(0,y)\bigr)  \frac{2-\sigma}{|y|^{n+\sigma}}dy. \]
Using (iii), approximating $\chi_{\R^n\setminus A}(y) (2-\sigma) |y|^{-n-\sigma}$ by $L^1$ functions $\mu$ with compact support and using the stability under uniform convergence result for subsolutions  \cite[Lemma 4.3]{CS2} we show that
\[ M^+_{\mathcal L_0} w \geq 0  \quad \mbox{in }\R^n.\]

We observe that  by definition $P-N = v+w$ and that, we have \[0 \leq P - v \leq 1-(1-\bar\theta)\le \bar \theta\quad \mbox{in }\boldsymbol D\] ---here we have used that $P\le 1$ in $B_1$ by \eqref{growthPN} . Note in addition that the assumption (ii) yields
\begin{equation}\label{PNcomparable}
\frac{\lambda}{\Lambda} P(x) \le N(x) \le \frac{\Lambda}{\lambda} P(x).
\end{equation}
Therefore,
\[\begin{split}
 w &= (P-v) -N \le \bar \theta -N \le \bar\theta -\frac{\lambda}{\Lambda}P  \\
&\le \bar \theta - \frac{\lambda}{\Lambda} (1-\bar \theta)\\
&\le - \lambda/\Lambda + 2\bar \theta \le -c  \quad \mbox{ in } \boldsymbol D,
\end{split}\]
where $c= \lambda/2\Lambda >0$. Here we have used \eqref{bartheta}.

We now use the ``half'' Harnack of Theorem 5.1 in \cite{CS3} applied to the function $\bar w = \bigl(w(r\,\cdot\,)+c\bigr)^+$ (with $r>0$ small) to conclude that $w(0)+c \le c/2$.
Indeed, the function $\bar w$ is a subsolution and, by \eqref{growthPN},  it satisfies $0\le  P \le 2^{k\bar \alpha} \quad \mbox{in } B_{2^k/r}(0)$ and $\bar w =0$ in $\boldsymbol D/r$, which covers most of $B_{1/r}$.
Hence, taking both $r$ and $\eta$ small enough we can make $\int_{\R^n} \bar w(y) \omega_\sigma(y) \,dy$ as small as we wish. Thus, using Theorem 5.1  in \cite{CS3} we find that $w(0)+c = \bar w(0)\le c/2$ as promised.
As a consequence we obtain that $w(0)\le -c/2<0$; a contradiction since $w(0)=0$ by definition.
This proves that \eqref{Asmall} holds for some $\eta>0$.

Note now that \eqref{Asmall} is equivalent to
\[ \bigl| \{ x\in B_1\, :\, v \le (1-\bar \theta)\}\bigr| \ge \eta |B_1|.\]
Next, by (iii), approximating $\chi_{A}(y) |y|^{-n-\sigma}$ by $L^1$ functions $\mu$ with compact support and using the stability under uniform convergence result for subsolutions  \cite[Lemma 4.3]{CS2} we show that
\[ M^+_{\mathcal L_0} v \geq 0  \quad \mbox{in }\R^n. \]
Taking now $\delta_0$ small enough in \eqref{eeeerror} and using  the $L^\varepsilon$ Lemma of   Theorem 10.4 in \cite{CS} applied to the function $(1-v)^+$, which nonnegative in all of $\R^n$ and which is an approximate supersolution in $B_{3/4}$, we obtain that
\[ 1-v \ge \bar \theta/C  \quad \mbox{ in all }B_{1/2}.\]
This is equivalent to saying
\[ v \le 1- \bar \theta/C =: 1-\theta  \quad \mbox{ in all }B_{1/2},\]
as we wanted to show.
This proves \eqref{dyadicimprovement}.

We next note that \eqref{dyadicimprovement} implies
\begin{equation}
\label{equationP}
0\le P(x)\le C|x|^{\bar\alpha}  \quad \mbox{for all }x\in B_1.
\end{equation}
Given that \eqref{PNcomparable} holds ---recall that this follows from assumption (ii)--- we similarly obtain that $ 0\le N(x)\le C|x|^{\bar\alpha}$.

Finally we notice that the point $0$ in the definition of $P$ and $N$ can be replaced by any point $z$ in $B_{1/2}$. Therefore,  using that
\[P(h)-N(h) = c(-\Delta)^{\sigma/2} \bigl( u(\cdot +h) -u \bigr)  (0),\] for some constant $c<0$, we have shown ---replacing $0$ be any $z\in B_{1/2}$--- that
\[ \left| (-\Delta)^{\sigma/2}  \bigl( u(\cdot +h) -u \bigr) \right| \le C|h|^{\bar \alpha} \quad \mbox{ in } B_{1/2},\]
for all $h\in B_{1/4}$.
This and the classical $C^{\bar \alpha}$ to $C^{\sigma+\bar\alpha}$ estimate for the Riesz potential $(-\Delta)^{-\sigma/2}$ easily imply that $[u]_{C^{\sigma+\bar \alpha}(B_{1/4})} \le C$.

The same argument repeated at every scale ---replacing $u$ by the rescaled function $\bar u = \rho^{-\sigma-\alpha} u(\rho\,\cdot\,)$ for all $\rho\ge 1$--- yields $[\bar u]_{C^{\sigma+\bar \alpha}(B_{1/4})} \le C$ which after rescaling  gives \eqref{last}.
Then, as explained previously in this proof, the Theorem follows straightforward letting $R\to \infty$.
\end{proof}

\section{Preliminary results and proof of Theorem \ref{thm}}

The following proposition is the core of Theorem \ref{thm}. It is in its proof (by contradiction) where we use the blow-up argument and the Liouville theorem described in the introduction.

\begin{prop} \label{propmain}
Let $\sigma\in (0,2)$. There is $\bar \alpha>0$ (depending only on $\sigma$, ellipticity constants, and dimension) such that the following statement holds.
Given $\alpha\in (0,\bar\alpha)$ let $\nu$ be the floor of $\sigma+\alpha$ and assume that ${\alpha'}\in(0,\alpha)$ satisfies $\nu<\sigma+{\alpha'}<\sigma+\alpha$.
Let $u\in C^{\sigma+{\alpha'}}(\R^n)$ be a solution of
\[
 \inf_{a\in \mathcal A} \bigl( L_a u  + c_a(x) \bigr) = 0 \quad\mbox{ in }B_{1},
\]
where $\{L_a\}\subset \mathcal L_0(\sigma, \lambda,\Lambda)$.
Assume that
\begin{equation}\label{nontrinvP}
\sup_{x\in B_1} |\inf_{a\in \mathcal A} c_a(x)| <+\infty  \quad \mbox{and }\quad  \sup_{a\in \mathcal A} \,[c_a]_{C^{\alpha}(B_1)}\le C_0.
\end{equation}

Then, $u \in C^{\sigma+\alpha}(\overline{B_{1/2}})$ and
\[ [u]_{C^{\sigma+\alpha}(B_{1/2})}\le C\bigl( \|u\|_{C^{\sigma+{\alpha'}}(\R^n)} +  C_0 \bigl),\]
where $C_0$ is the constant from \eqref{nontrinvP} and  $C$ depends only on $n$, $\sigma$, $\alpha$, ${\alpha'}$, $\lambda$, and $\Lambda$.
\end{prop}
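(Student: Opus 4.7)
The plan is to argue by contradiction via blow-up and compactness, reducing to Theorem \ref{liouville}. Suppose the estimate fails. Then there exist sequences of operators $L_{a,m}\in\mathcal L_0$, inhomogeneities $c_{a,m}$, and solutions $u_m\in C^{\sigma+\alpha'}(\R^n)$ with $\|u_m\|_{C^{\sigma+\alpha'}(\R^n)}+\sup_a[c_{a,m}]_{C^\alpha(B_1)}\le 1$, yet $[u_m]_{C^{\sigma+\alpha}(B_{1/2})}\to+\infty$. Denoting by $\mathcal P_\nu$ the polynomials of degree $\le\nu$, I would introduce the monotone obstruction
\[
\Theta(r)\;:=\;\sup_m\sup_{x_0\in B_{1/2}}\sup_{r'\ge r}(r')^{-\sigma-\alpha}\inf_{P\in\mathcal P_\nu}\|u_m-P\|_{L^\infty(B_{r'}(x_0))},
\]
which is finite for $r>0$ (indeed $\Theta(r)\le C r^{\alpha'-\alpha}$ by Taylor's theorem applied to the uniform $C^{\sigma+\alpha'}$ bound) and satisfies $\Theta(r)\to+\infty$ as $r\to 0^+$ (because the $C^{\sigma+\alpha}$ seminorms blow up). I then select $(m_k,x_k,r_k)$ with $r_k\to 0$ that nearly saturate $\Theta(r_k)$, and let $P_k$ be a best $\mathcal P_\nu$-approximation of $u_{m_k}$ on $B_{r_k}(x_k)$.

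The blow-ups
\[
v_k(y)\;:=\;\frac{(u_{m_k}-P_k)(x_k+r_k y)}{r_k^{\sigma+\alpha}\,\Theta(r_k)}
\]
are non-trivial at scale one, $\inf_{P\in\mathcal P_\nu}\|v_k-P\|_{L^\infty(B_1)}\ge 1/2$, and the monotonicity of $\Theta$ yields $\inf_{P\in\mathcal P_\nu}\|v_k-P\|_{L^\infty(B_R)}\le R^{\sigma+\alpha}$ for $1\le R\le 1/(2r_k)$. Combining these polynomial-approximation bounds with the rescaled $C^{\sigma+\alpha'}$ control and elementary interpolation gives the seminorm growth $[v_k]_{C^\beta(B_R)}\le C R^{\sigma+\alpha-\beta}$ for $\beta\in[0,\sigma+\alpha']$, uniformly in $k$, which is precisely hypothesis (i) of Theorem \ref{liouville} restricted to $R\le 1/(2r_k)$.

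To transfer the equation to $v_k$ I would exploit the concavity and translation invariance of $I_m(w,x):=\inf_a(L_{a,m}w+c_{a,m}(x))$. Subtracting the equations at $x$ and $x+h$ yields $|M^\pm_{\mathcal L_0}[u_m(\cdot+h)-u_m](x)|\le C_0|h|^\alpha$, and Jensen's inequality applied to the concave functional $w\mapsto I_m(w,\cdot)$ gives $M^+_{\mathcal L_0}\bigl(\overline u_m-u_m\bigr)\ge -C_0\bigl(\mathrm{diam}\,\mathrm{supp}\,\mu\bigr)^\alpha$ for the $\mu$-average $\overline u_m$. Subtracting $P_k$ is compatible with these Pucci bounds: even symmetry of kernels kills linear parts, and when $\nu=2$ the quadratic part of $P_k$ contributes to $L_{a,m}P_k$ only by an $a$-dependent constant, absorbed into the inhomogeneity. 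Under the rescaling defining $v_k$, every right-hand side is divided by $r_k^{\sigma+\alpha}\Theta(r_k)$ while acquiring only a factor $r_k^\sigma$ from $M^\pm_{\mathcal L_0}$, so the errors scale like $r_k^\alpha/\Theta(r_k)\to 0$. Hence $v_k$ satisfies approximate (ii) and (iii) with vanishing errors, and standard stability of viscosity sub/supersolutions under uniform convergence (Lemma 4.3 of \cite{CS2}) transfers the exact (ii) and (iii) to any uniform limit.

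Compactness then follows from the uniform $C^{\sigma+\alpha'}(B_R)$ bounds modulo polynomials, via Arzelà--Ascoli and a diagonal extraction, yielding a subsequence $v_k\to v_\infty$ locally in $C^{\sigma+\alpha''}$ for some $\alpha''<\alpha'$, with $v_\infty$ satisfying hypotheses (i)--(iii) of Theorem \ref{liouville}. The Liouville theorem forces $v_\infty\in\mathcal P_\nu$, contradicting the lower bound $\inf_{P\in\mathcal P_\nu}\|v_\infty-P\|_{L^\infty(B_1)}\ge 1/2$ inherited from $v_k$. The main technical obstacle is keeping the bookkeeping of scales consistent: one must arrange the subsequence so that the normalization $r_k^{\sigma+\alpha}\Theta(r_k)$ simultaneously produces non-triviality on $B_1$, the correct polynomial growth on $B_R$, uniformly bounded rescaled $C^{\sigma+\alpha'}$ seminorms (requiring $\Theta(r_k)\sim r_k^{\alpha'-\alpha}$ along the chosen sequence), and vanishing error terms in the Pucci inequalities. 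The case $\nu=2$ is the most delicate, since quadratic polynomials are not in the kernel of $L_a$ and $L_aP_k$ is a nonzero (though $x$-independent) constant; this is precisely the difficulty that motivates the introduction of the averaging hypothesis (iii) in the Liouville theorem.
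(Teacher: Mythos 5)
Your overall architecture (contradiction, a monotone quantity $\Theta(r)$, blow-up at nearly saturating scales, transfer of the concave equation to conditions (ii)--(iii) via differences and Jensen-type averages, and conclusion by Theorem \ref{liouville}) is the same as the paper's. The genuine gap is in your choice of $\Theta$: you define it through the $L^\infty$ distance to polynomials, $(r')^{-\sigma-\alpha}\inf_{P\in\mathcal P_\nu}\|u_m-P\|_{L^\infty(B_{r'}(x_0))}$, and normalize the blow-ups by $r_k^{\sigma+\alpha}\Theta(r_k)$. With this normalization the rescaled top seminorm obeys only
\[
[v_k]_{C^{\sigma+\alpha'}(B_R)}\;\le\; \frac{r_k^{\alpha'-\alpha}}{\Theta(r_k)}\,R^{\alpha-\alpha'},
\]
so the ``uniform $C^{\sigma+\alpha'}(B_R)$ bounds modulo polynomials'' you invoke require the lower bound $\Theta(r_k)\gtrsim r_k^{\alpha'-\alpha}$. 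That bound is not available: all you know is $\Theta(r)\le Cr^{\alpha'-\alpha}$ and $\Theta(r)\to\infty$, and $\Theta$ may blow up arbitrarily slowly (say logarithmically), in which case your blow-ups have no uniform $C^{\sigma+\alpha'}$ control at all. This is not bookkeeping that can be ``arranged along the subsequence'': it destroys both the compactness in $C^{\sigma+\alpha''}_{\rm loc}$ and, more importantly, the verification of hypothesis (i) of Theorem \ref{liouville} up to $\beta=\sigma+\alpha'$ for the limit, without which (ii)--(iii) are not even classically meaningful and the Liouville theorem cannot be applied. The paper resolves exactly this tension by defining $\theta(r)$ through the $C^{\sigma+\alpha'}$ \emph{seminorm} itself, which simultaneously gives the growth control $[v_m]_{C^{\sigma+\alpha'}(B_R)}\le CR^{\alpha-\alpha'}$ and nondegeneracy at unit scale; the price is that this seminorm nondegeneracy does not survive the weaker-norm limit, which is why the paper adds the extra rescaling step producing the oscillation bound \eqref{nondeg35bis} on $D^lv_m$, $|l|=\nu$. (Your $L^\infty$ nondegeneracy would indeed pass to the limit painlessly, but only if the rest of the scheme were consistent, which it is not with your normalization.)

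A secondary inaccuracy: for $\nu=2$ you claim $L_{a}P_k$ is a finite $x$-independent constant absorbable into the inhomogeneity. For kernels in $\mathcal L_0$ with $\sigma<2$ this integral diverges at infinity ($|y|^2|y|^{-n-\sigma}$ is not integrable), so nothing can be absorbed. The correct and sufficient observation, used in the paper as \eqref{difpols}, is that $\delta^2 p(x+h,y)-\delta^2p(x,y)=0$ for every $p\in\mathcal P_\nu$ with $\nu\le2$, so the subtracted polynomial cancels identically inside the differences and averages entering (ii) and (iii) and never needs to be evaluated under $L_a$. Relatedly, hypothesis (iii) is not motivated by the polynomial subtraction but by the concavity of the operator, which is what the Liouville proof exploits through averaged translates.
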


We will use the following trivial Claim.
\begin{claim}\label{lemaux}
Let $\beta>0$ and $\beta'\in(0,\beta)$.  Let $\nu = \lfloor \beta \rfloor$ be the floor (or integer part) of $\beta$ and assume that $\nu <\beta'<\beta$.
Let $u$ be a continuous function belonging to $C^{\beta'}(\R^n)$.

If there exists $C_0>0$ such that
\begin{equation}\label{allrz}
\sup_{r>0} \sup_{z\in B_{1/2}} r^{\beta'-\beta}\bigr[ u \bigr]_{C^{\beta'}(B_r(z))} \le C_0 ,
\end{equation}
then
\begin{equation}\label{holderbound}
[u]_{C^{\beta}\left(B_{1/2}\right)} \le  C_0.
\end{equation}
\end{claim}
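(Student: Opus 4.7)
The plan is to reduce \eqref{allrz}--\eqref{holderbound} to a one-line scale comparison between Hölder seminorms of $D^\nu u$. Since $\nu = \lfloor \beta \rfloor$ and $\nu < \beta' < \beta < \nu+1$, the paper's convention $C^\beta = C^{\nu,\,\beta-\nu}$ gives
\begin{equation*}
[u]_{C^{\beta'}(B_r(z))} = [D^\nu u]_{C^{\beta'-\nu}(B_r(z))},
\qquad
[u]_{C^{\beta}(B_{1/2})} = [D^\nu u]_{C^{\beta-\nu}(B_{1/2})},
\end{equation*}
and the hypothesis $u \in C^{\beta'}(\R^n)$ ensures that $f := D^\nu u$ is continuous, so these seminorms make classical sense. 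Setting $a := \beta'-\nu$ and $b := \beta-\nu$ with $0 < a < b < 1$, assumption \eqref{allrz} becomes
\begin{equation*}
[f]_{C^a(B_r(z))} \le C_0\, r^{\,b-a}
\qquad \text{for all } r>0 \text{ and all } z\in B_{1/2},
\end{equation*}
while the goal \eqref{holderbound} reduces to $[f]_{C^b(B_{1/2})} \le C_0$.

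For the main step I would fix two distinct points $x, y \in B_{1/2}$ and test the hypothesis at the center $z := x$ (which lies in $B_{1/2}$) with any radius $r > |x-y|$. Then $y \in B_r(x)$, so
\begin{equation*}
|f(x) - f(y)| \le [f]_{C^a(B_r(x))}\,|x-y|^a \le C_0\, r^{\,b-a}\,|x-y|^a.
\end{equation*}
Letting $r \searrow |x-y|$ yields $|f(x)-f(y)| \le C_0\, |x-y|^{(b-a)+a} = C_0\, |x-y|^b$, which is precisely the desired Hölder bound with the sharp constant $C_0$.

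There is no real obstacle here beyond bookkeeping; the two ingredients used are the paper's convention for the fractional space $C^\beta$ and the freedom to place the testing ball around $z = x$. The constant is preserved exactly because $b-a = \beta-\beta' > 0$, so the factor $r^{\,b-a}$ collapses to $|x-y|^{b-a}$ as $r$ is taken down to $|x-y|$ without producing any multiplicative loss.
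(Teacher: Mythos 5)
Your proof is correct and follows essentially the same route as the paper: the paper likewise reduces to the case $\nu=0$ by passing to derivatives of $u$ (your $f=D^\nu u$) and then concludes from the one-line bound $\|u(z+\,\cdot\,)-u(z)\|_{L^\infty(B_r)}\le r^{\beta'}[u]_{C^{\beta'}(B_r(z))}\le C_0 r^{\beta}$, which is the same scale comparison you perform by centering the testing ball at $x$ and letting $r\searrow|x-y|$.
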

\begin{proof}
It is enough to prove it for $\nu=0$, that is, $0<\beta'<\beta<1$ since the result for $\nu\ge 1$ follows from this case applied to partial derivatives of $u$.

To prove it, note that \eqref{allrz} implies that for all $z\in B_{1/2}$ and for all $r>0$ we have
\[
\|u(z+\,\cdot\,)-u(z)\|_{L^\infty(B_r)}\le  r^{\beta'}\bigr[ u \bigr]_{C^{\beta'}(B_r(z))} \le r^{\beta'} C_0 r^{\beta-\beta'}  = C_0 r^{\beta}.
\]
Hence \eqref{holderbound} follows.
\end{proof}

We now give the
\begin{proof}[Proof of Proposition \ref{propmain}]
The proof is by contradiction.
If the statement of the proposition is false then, for each integer $k\ge 0$, there exist  $u_k$ and $C_{0,k}$ such that
\begin{itemize}
\item $ \inf_{a\in \mathcal A_k} \left( L_a u  + c_a(x) \right) =  0 $   in $B_{1}$;
\vspace{3pt}
\item $|\inf_{a\in \mathcal A_k} c_a(x)| <+\infty$  \hspace{5pt} and  \hspace{5pt} $\sup_{a\in \mathcal A_k} \,[c_a]_{C^{\alpha}(B_1)}\le C_{0,k}$;
\vspace{3pt}
\item $\| u_k \|_{C^{\sigma+{\alpha'}}(\R^n )} + C_{0,k} \le 1$ (we may  always assume this dividing $u_k$ by  the previous quantity);
\end{itemize}
and
\[ [u_k]_{C^{\sigma+\alpha}(B_{1/2})} \ge k.\]

Using Claim \ref{lemaux} with $\beta= \sigma+\alpha$ and $\beta'= \sigma+{\alpha'}$ we obtain that
\begin{equation}\label{2k2}
\sup_k \sup_ {z\in B_{1/2}} \sup_{r>0} \ r^{{\alpha'}-\alpha}\left[ u_k \right]_{C^{\sigma+{\alpha'}}(B_{r}(z))} = +\infty.
\end{equation}

Next we define
\[
 \theta(r) := \sup_k  \sup_ {z\in  B_{1/2}}  \sup_{r'>r}  (r')^{{\alpha'}-\alpha}\,\bigl[u_k\bigr]_{C^{\sigma+{\alpha'}} \left(B_{r'}(z)\right)} \, ,
\]
The function $\theta$ is monotone nonincreasing and we have $\theta(r)<+\infty$ for $r>0$ since we are assuming that  $[u_k]_{C^{\sigma+{\alpha'}}(\R^n)}\le 1$.
In addition, by \eqref{2k2} we have  $\theta(r)\nearrow +\infty$ as $r\searrow0$.
For every positive integer $m$, by definition of $\theta(1/m)$ there are $r'_m\ge 1/m$, $k_m$, and $z_{m} \in  B_{1/2}$, for which
\begin{equation}\label{nondeg2}
(r'_m)^{{\alpha'}-\alpha}  \bigl[u_{k_m}\bigr]_{C^{\sigma+{\alpha'}} \left(B_{r'_m}(z_m)\right)} \ge \frac{1}{2} \theta(1/m) \ge \frac{1}{2}  \theta(r'_m).
\end{equation}
Here we have used that $\theta$ is non-increasing.
Note we will have $r'_m\searrow0$.

Let $p_{k,z,r}(\cdot\,-z)$ be the polynomial  of degree less or equal than $\nu$  in the variables $(x-z)$ which best fits $u_k$ in $B_r(z)$ by least squares. That is,
\[p_{k,z,r} := {\rm arg\,min}_{p\in \mathcal P_\nu} \int_{B_r(z)} \bigl(u_k(x)-p(x-z)\bigr)^2 \,dx ,\]
where $\mathcal P_\nu$ denotes the linear space of polynomials of degree at most $\nu$ with real coefficients.
From now on in this proof we denote \[p_m = p_{k_m, z_m,r'_m}.\]

We consider the blow up sequence
\begin{equation}\label{eqvm}
 v_m(x) = \frac{u_{k_m}(z_{m} +r'_m x)-p_{m}(r'_m x)}{(r'_m)^{\sigma+\alpha}\theta(r'_m)}.
 \end{equation}
Note that, for all $m\ge 1$ we have
\begin{equation}\label{2}
\int_{B_1(0)} v_m(x) q(x) \,dx =0\quad \mbox{for all } q\in \mathcal P_\nu.
\end{equation}
This is the optimality condition for least squares.

Note also that \eqref{nondeg2} implies the following inequality for all $m\geq1$:
\begin{equation}\label{nondeg35}
\begin{split}
[v_m]_{C^{\sigma+{\alpha'}}(B_1)} &=  (r'_m)^{\sigma+\alpha'} \left[ \frac{u_{k_m}(z_{m} +r'_m \,\cdot\,)-p_{m}(r'_m \,\cdot\,)}{(r'_m)^{\sigma+\alpha}\theta(r'_m)} \right]_{C^{\sigma+\alpha'}\left(B_{r'_m(z_m)}\right)}
\\
&=  \frac{(r'_m)^{{\alpha'}-\alpha}}{\theta(r'_m)} \bigl[ u_{k_m}(z_{m} +r'_m x) \bigr]_{C^{\sigma+\alpha'}\left(B_{r'_m(z_m)}\right)}  \ge 1/2,
\end{split}
\end{equation}
Here we have used that $\nu:= \lfloor \sigma+\alpha\rfloor <\sigma+\alpha'$, and thus
 \[\bigl[ p_m(z_{m} +r'_m x) \bigr]_{C^{\sigma+\alpha'}\left(B_{r'_m(z_m)}\right)}=0,\]
 since $p_m$ is a polynomial of degree $\nu$. Note that it is here were we crucially use the assumption that $\sigma+\alpha$ is not an integer.

Next we want to estimate
\[\begin{split}
[v_{m}]_{C^{\sigma+{\alpha'}}(B_R)} &= \frac{1}{\theta({r'_m})(r'_m)^{\alpha-{\alpha'}} } \bigl[u_{k_m} \bigr]_{C^{\sigma+{\alpha'}}\left(B_{Rr'_m}(z_m)\right)}
\\
&= \frac{R^{\alpha-{\alpha'}}}{\theta({r'_m}) (Rr'_m )^{\alpha-{\alpha'}} }\bigl[u_{k_m}\bigr]_{C^{\sigma+{\alpha'}}\left(B_{Rr'_m}(z_m)\right)} .
\end{split}
\]
To do it, we use the definition of $\theta$ and its monotonicity to obtain the following growth control for the $C^{\sigma+{\alpha'}}$ seminorm of $v_m$
\begin{equation}\label{growthc0}
[v_{m}]_{C^{\sigma+{\alpha'}} (B_R)} \leq CR^{\alpha-{\alpha'}}\quad \textrm{for all}\ \,R\ge 1.
\end{equation}

When $R=1$,   \eqref{growthc0}  implies that $\|v_m- q\|_{L^\infty(B_1)}\le C$, for some $q\in \mathcal P_\nu$. Then, \eqref{2} implies that
\begin{equation}\label{boundedinB1}
\|v_m\|_{L^\infty(B_1)}\le C.
\end{equation}
Then, using \eqref{growthc0} we obtain
\begin{equation}\label{growthc1}
 [v_{m}]_{C^{\beta} (B_R)} \leq CR^{\sigma+\alpha-\beta}.
\end{equation}
for all $\beta\in[0, \sigma+{\alpha'}]$.
Indeed, \eqref{boundedinB1} implies that for every multiindex $l$ with $|l|\le\nu$ there is some point  $x_* \in B_1$  such that  $|D^l v_m(x_*)|\le C$. The existence of such $x_*$ can be shown taking some nonnegative $\eta\in C^\infty_c(B_1)$ with unit mass and observing that
the inequality
\[ \left|\int \eta (x) D^l v_m (x) \,dx\right| \le C \int |D^l \eta| v_m(x)\,dx \le C\]
rules out the two possibilities $D^l v_m >C$ and $D^l v_m <-C$ in all of $B_1$.

Hence, using \eqref{growthc0}, we obtain that for all $l$ with $|l|=\nu$ and $x\in B_R$ we have
\[  |D^l v_m (x) | \le |D^l v_m (x^*)| + C R^{\alpha-{\alpha'}} |x-x^*|^{\sigma+{\alpha'}-\nu}  \le CR^{\sigma+\alpha-\nu}.\]
Iterating the same argument, we then show the corresponding estimate for all $l$ with $0\le |l| \le \nu \nu$.
Then \eqref{growthc1} for all $\beta\in [0,\sigma+{\alpha'}]$ follows by interpolation.

We now claim that, by further rescaling $v_m$ if necessary, we may assume that in addition to \eqref{nondeg35} the following holds
\begin{equation}\label{nondeg35bis}
\sup_{|l|=\nu}{\rm osc}_{B_1} D^l v_m  \ge 1/4,
\end{equation}
where $l$ donates a multiindex.
Indeed, if \eqref{nondeg35} holds then there are $x_m \in B_1$ and $h_m\in B_{1-|x_m|}$ such that
\[ \sup_{|l|=\nu} \frac{\bigl| D^l v_m(x_m+h_m)- D^l v_m(x_m)\bigr|}{ |h_m|^{\sigma+\alpha'-\nu}} \ge 1/4\]
and thus we can consider, instead of $v_m$, the function
\[ \tilde v_m = \frac{v_m(x_m+|h_m|x) - \tilde p_m(x) }{|h_m|^{\sigma+\alpha'}} ,\]
where $\tilde p_m \in \mathcal P_\nu$ is chosen so that $\tilde v_m$ satisfies \eqref{2} (with $v_m$ replaced by $\tilde v_m$).

Note that $\tilde p_m$  is the polynomial that approximates better (in the $L^2$ sense) $v_m(x_m +\, \cdot\,)$ in $B_{|h_m|}(x_m)$ and since $v_m \in C^{\sigma+\alpha'}$ with the control \eqref{growthc0} we have
\[  \bigl | v_m(x_m+|h_m|x) - \tilde p_m(x)\bigr| \le C|h_m|^{\sigma+\alpha'} |x|^{\sigma+\alpha'} .\]
Therefore, $\tilde v_m$ also satisfies \eqref{growthc0} and \eqref{growthc1} (with $v_m$ replaced by $\tilde v_m$).
Note that $\tilde v_m$ would also be of the form \eqref{eqvm} for new $z_m$ and $r'_m$  defined as $z_m+x_m$ and $|h_m|r'm$, respectively ---where we use that $\theta(|h_m|r'_m)\ge \theta(r'_m)$.

In summary, the new sequence $\tilde v_m$ satisfies the same properties as $v_m$ and, in addition, \eqref{nondeg35bis}, as desired.

Next we prove the following

\vspace{5pt}
\noindent {\em Claim. A subsequence of $v_m$ converges in $C^{(\nu+\sigma+{\alpha'})/2}_{\rm loc}(\R^n)$ to a function $v\in C^{\sigma+{\alpha'}}_{\rm loc}(\R^n)$. This function $v$ satisfies the assumptions of the Liouville-type Theorem \ref{liouville}.}
\vspace{5pt}

The $C^{(\nu+\sigma+{\alpha'})/2}$ uniform convergence on compact sets of $\R^n$ of a subsequence of  $v_m$ to some
$v\in C^{\sigma+{\alpha'}}(\R^n)$ follows from \eqref{growthc1} and the Arzel\`a-Ascoli theorem (and the typical
diagonal sequence trick) ---note that since $\nu <\sigma+\alpha'$ the exponent $(\nu+\sigma+{\alpha'})/2$ is less than
$\sigma+\alpha'$, as required to have compactness in the norm $C^{(\nu+\sigma+{\alpha'})/2}$ of a equibounded sequence
in the stronger norm $C^{\sigma+{\alpha'}}$. The only important fact about the election of the exponent
$(\nu+\sigma+{\alpha'})/2$ is that it is greater that $\nu$ and $\sigma$.

First, passing to the limit \eqref{growthc1} we find that the assumption (i) of Theorem \ref{liouville} is satisfied by this limit function $v$.

Now, each $u_k$ satisfies a concave equation of the type  \eqref{nontrinv}-\eqref{nontrinv1}-\eqref{nontrinv2}.
Thus,   for  every $L^1$ density $d\mu(h)$ with compact support and $\mu(\R^n)=1$  and for $m$ large enough we have
\[
\begin{split}
 0 &=  \ave d\mu\left(\frac{\bar h}{r'_m}\right) \inf_{a\in \mathcal A_{k_m}}  \bigl( L_a u_{k_m}(\bar x+\bar h)   + c_a(\bar x+\bar h) \bigr) -0
 \\
&\le \inf_{a\in \mathcal A_{k_m}} \left(   \ave L_a u_{k_m}(\bar x+\bar h)  + c_a(\bar x+\bar h)  d\mu\left(\frac{\bar h}{r'_m}\right) \right) - \inf_{a\in \mathcal A_{k_m}} \bigl(   L_a u_{k_m}(\bar x)  + c_a(\bar x) \bigr).
\end{split}
\]

Recall that by \eqref{nontrinvP} we have $\sup_{a\in \mathcal A_k} \,[c_a]_{C^{\alpha}(B_1)}\le C_{0,k}$.
Hence, for all  $\bar x\in B_{3/4}(z)$ provided that $m$ is chosen large enough so that $r'_m {\rm diam}\,\bigl({\rm supp}\,\mu\bigr) \le 1/4$ we have
\begin{equation}\label{uuu}
\begin{split}
 - C_{0,k_m}  \ave d\mu\left(\frac{\bar h}{r'_m}\right)  |\bar h|^{^\alpha} &\le - \sup_{a\in \mathcal A_{k_m}} \, [c_a]_{C^\alpha(B_1)} \ave d\mu\left(\frac{\bar h}{r'_m}\right)  |\bar h|^{^\alpha}
 \\
  &\le \inf_{a\in \mathcal A_{k_m}} \left(   \ave L_a u_{k_m}(\bar x+\bar h) d\mu\left(\frac{\bar h}{r'_m}\right)  + c_a(\bar x)   \right)
 \\
 &\hspace{50mm}    - \inf_{a\in \mathcal A_{k_m}} \bigl(   L_a u_{k_m}(\bar x)  + c_a(\bar x) \bigr)
\\
&\le \sup_{a\in \mathcal A_{k_m}}  \left(   \ave L_a u_{k_m}k(\bar x+\bar h) d\mu\left(\frac{\bar h}{r'_m}\right)-   L_a u_{k_m}(\bar x)  \right)
\\
& \le M^+_{\mathcal L_0} \left(  \ave u_{k_m}(\,\cdot\,+\bar h) \,d\mu\left(\frac{\bar h}{r'_m}\right) -u_{k_m}\right)(\bar x).
\end{split}
\end{equation}

Note now that, since $\nu\le 2$,
\begin{equation}\label{difpols}
\delta^2 p(x+h,y)-\delta^2 p(x,y) = 0\quad \mbox{for all }p\in \mathcal P_\nu\mbox{  and for all }x,y,h\mbox{ in }\R^n.
\end{equation}
Taking into account \eqref{difpols}, we now translate \eqref{uuu} from $u_{k_m}$ to $v_{m}$. Using the definition of $v_m$ in \eqref{eqvm}, and setting $\bar h= r'_m h$ and $\bar x=z_m+ r'_mx$ in \eqref{uuu}, we obtain
\[\label{vv}
\begin{split}
- C_{0,k_m} (r'_m)^\alpha & \int_{\R^n} d\mu(h) |h|^\alpha  \le
\\
&\le  \frac{1}{(r'_m)^\sigma}M^+_{\mathcal L_0} \left(  (r'_m)^{\sigma+\alpha} \theta(r'_m)\left\{\ave v_{m}(\,\cdot\,+ h) \,d\mu(h) -v_{m}\right\}\right)(x)
\\
& \le  (r'_m)^{\alpha} \theta(r'_m) M^+_{\mathcal L_0} \left(  \ave v_{m}(\,\cdot\,+ h) \,d\mu(h) -v_{m}\right)(x)
\end{split}
\]
whenever  $|x| \le \frac{1}{C r'_m}$, and thus
\begin{equation}\label{vv}
- \frac {C_{0,k_m}}{\theta(r'_m)} C(\mu)  \le  M^+_{\mathcal L_0} \left(  \ave v_{m}(\,\cdot\,+ h) \,d\mu(h) -v_{m}\right) \quad \mbox{in  } |x| \le \frac{1}{C r'_m}.
\end{equation}
Given that $\mu$ has compact support, that $|C_{0,k_m}|\le 1$, and that $\theta(r'_m) \nearrow \infty$, we obtain that the left hand side of \eqref{vv} converges to zero $m\to +\infty$.
Thus, passing \eqref{vv} to the limit we find that
\[
0 \le M^+_{\mathcal L_0} \left(  \ave v(\,\cdot\,+ h) \,d\mu(h) -v\right) \quad \mbox{in all of }\R^n.
\]

Indeed, to carefully justify the previous limit $m \to +\infty$ on the right hand side of \eqref{vv} we are using that, by \eqref{growthc1}, the functions
\[w_m :=\ave v_{k_m}(\,\cdot\,+ h) \,d\mu(h) -v_{k_m}\]
satisfy, for all $R\ge {\rm diam}\,\bigl({\rm supp}\,\mu\bigr)$, that
\[  [w_m]_{C^{\sigma+\alpha'}(B_R)} \le CR^{\alpha-\alpha'}\quad \mbox{and}\quad \bigl|w_m(x)\bigr|\le C \int |h|^{\beta} d\mu(h) |x|^{\sigma+\alpha-\beta} \le C |x|^{\sigma+\alpha-\beta}.\]
Thus, taking $\beta\in \bigl(\alpha, \min\{1,\sigma+\alpha'\}\bigr)$, and  since $|x|^{\sigma+\alpha-\beta} \in L^1(\R^n, \omega_\sigma)$, we can use the dominated convergence theorem to compute the limit.
Therefore, the assumption (iii) of Theorem \ref{liouville} is satisfied by $v$.

A very similar (actually easier) computation shows that the assumption (ii) is also satisfied by $v$.
This finishes the proof the Claim.
\vspace{5pt}

We have thus proved that $v$ satisfies all the assumptions of Theorem \ref{liouville} and hence we conclude that $v$ is a polynomial of degree $\nu$.
On the other hand, passing \eqref{2} to the limit we obtain that $v$ is orthogonal to every polynomial of degree $\nu$ in $B_1$, and hence it must be $v\equiv 0$.
But then passing \eqref{nondeg35bis} to the limit we obtain that $v$ cannot be constantly zero in $B_1$; a contradiction.
\end{proof}

Using Proposition \ref{propmain} we prove an intermediate technical statement that will be later used to prove Theorem \ref{thm}.
\begin{prop} \label{corprop}
Let $\sigma\in (0,2)$, $\lambda$, $\Lambda$, $A_0$ and $C_0$ be given constants with $0<\lambda\le \Lambda$.
Suppose that $A_0\le 1$. There is $\bar \alpha>0$ (depending only on $n$, $\sigma$, $\lambda$ and $\Lambda$) such that the following statement holds.
Given $\alpha$ and $\alpha'$ satisfying $0<\alpha'<\alpha<\bar\alpha$ and  $\nu<\sigma+{\alpha'}<\sigma+\alpha$, where $\nu$ is the floor of $\sigma+\alpha$.
Let $u\in C^{\sigma+\alpha}(\overline{B_1})\cap C^\alpha(\R^n)$ be a solution of
\[ \cI(u,x)  = 0 \quad\mbox{ in }B_{1}, \]
where $\cI$, defined by \eqref{nontrinv}, satisfies \eqref{nontrinv1}, \eqref{nontrinv22}, and \eqref{nontrinv2}.

Then,
\begin{equation}\label{thegoal}
 [u]_{C^{\sigma+\alpha}(B_{1/4})}\le C\bigl(\|u\|_{C^{\sigma+{\alpha'}}(B_1)} + A_0  \|u\|_{C^{\sigma+\alpha}(B_1)} + \|u\|_{C^{\alpha}(\R^n)} + C_0 \bigl),
\end{equation}
where $C_0$ is the constant from \eqref{nontrinv2} and  $C$ depends only on $n$, $\sigma$, $\alpha$, ${\alpha'}$, $\lambda$,  $\Lambda$.
\end{prop}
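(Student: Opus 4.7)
The plan is to reduce Proposition \ref{corprop} to the translation invariant estimate of Proposition \ref{propmain} by freezing the kernels $K_a(x,y)$ at $x=0$ and packaging the resulting $x$-dependence into a modified source term. Setting $L_a^0 u(x) := \int_{\R^n} \delta^2 u(x,y) K_a(0,y)\,dy$, which belongs to $\mathcal L_0$ by \eqref{nontrinv1}, the equation $\cI(u,x) = 0$ becomes
\[
\inf_{a\in\mathcal A}\bigl(L_a^0 u(x) + c_a(x) + E_a(x)\bigr)=0 \quad \mbox{in } B_1,
\]
where $E_a(x):= \int_{\R^n} \delta^2 u(x,y)\bigl[K_a(x,y)-K_a(0,y)\bigr]\,dy$ is the ``freezing error''.

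Because Proposition \ref{propmain} requires a globally $C^{\sigma+\alpha'}$ function, I will introduce a smooth cutoff $\eta\in C^\infty_c(B_1)$ with $\eta\equiv 1$ on $B_{3/4}$ and split $u=v+w$ with $v := u\eta$ and $w := u(1-\eta)$. Then $v$ is compactly supported with $\|v\|_{C^{\sigma+\alpha'}(\R^n)}\le C\|u\|_{C^{\sigma+\alpha'}(B_1)}$, while $w\equiv 0$ on $B_{3/4}$ so that for $x\in B_{1/2}$ the tail $L_a^0 w(x)$ only involves $|y|\ge 1/4$. Absorbing this tail into the source, the equation rewrites as
\[
\inf_{a\in\mathcal A}\bigl(L_a^0 v(x) + \bar c_a(x)\bigr)=0 \quad \mbox{in } B_{1/2},\qquad \bar c_a := c_a + E_a + L_a^0 w.
\]

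The technical heart is the uniform Hölder bound $\sup_a [\bar c_a]_{C^\alpha(B_{1/2})} \le C(C_0 + A_0\|u\|_{C^{\sigma+\alpha}(B_1)} + \|u\|_{C^\alpha(\R^n)})$. The contribution of $c_a$ is immediate from \eqref{nontrinv2}, and $L_a^0 w$ is $C^\alpha$ with norm controlled by $\|u\|_{C^\alpha(\R^n)}$ since its integrand is supported on $|y|\ge 1/4$ where the kernel is bounded and integrable. The sensitive piece is $E_a$: I will split the difference $E_a(x)-E_a(x')$ into one part containing $\delta^2 u(x,\cdot)-\delta^2 u(x',\cdot)$ and another containing $K_a(x,\cdot)-K_a(x',\cdot)$, and then estimate each via a dyadic decomposition in $|y|$. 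The integral kernel bound \eqref{nontrinv22}, paired with the pointwise estimate $|\delta^2 u(x,y)|\le C\|u\|_{C^{\sigma+\alpha}(B_1)}|y|^{\sigma+\alpha}$ for small $|y|$ obtained from a $\nu$-th order Taylor expansion (here it is essential that $\delta^2$ is even in $y$, so the odd Taylor terms drop and one never loses integrability in $y$), and with $\|u\|_{C^\alpha(\R^n)}$ for large $|y|$, produces a dyadic series that converges thanks to $\sigma<2$ and $\alpha>0$.

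Finally, the rescaling $\bar v(x):=v(x/2)$, $\tilde c_a(x):= 2^{-\sigma}\bar c_a(x/2)$ transplants the equation to $\inf_a(L_a^0 \bar v + \tilde c_a)=0$ in $B_1$ (the class $\mathcal L_0$ and the $C^\alpha$ bound of the source are preserved by this scaling), so Proposition \ref{propmain} applies and gives $[\bar v]_{C^{\sigma+\alpha}(B_{1/2})}\le C(\|\bar v\|_{C^{\sigma+\alpha'}(\R^n)} + \sup_a[\tilde c_a]_{C^\alpha(B_1)})$. Undoing the rescaling and using $v=u$ on $B_{3/4}$ yields \eqref{thegoal}. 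The main obstacle will be the Hölder estimate for $E_a$ outlined above: the hypothesis \eqref{nontrinv22} only controls the $L^1$ mass of the kernel increment on each dyadic annulus, so one must carefully match this integral bound with the precise scaling of $|\delta^2 u|$ near the singularity of the kernel. The resulting balance is tight but works: the small-$|y|$ contribution produces exactly the term $A_0\|u\|_{C^{\sigma+\alpha}(B_1)}$ in the estimate (explaining why this factor, rather than something better, appears on the right-hand side of \eqref{thegoal}), while the large-$|y|$ contribution is absorbed by $\|u\|_{C^\alpha(\R^n)}$.
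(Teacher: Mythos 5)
Your proposal is correct and follows essentially the same route as the paper: cut off with $\eta$, freeze the kernels at $x=0$, absorb the freezing error and the exterior tail into a modified source term (your $\bar c_a = c_a + E_a + L^0_a w$ is the same decomposition as the paper's $\tilde c_a = c_a + \boldsymbol A + \boldsymbol B$, merely regrouped), verify its $C^\alpha$ bound via \eqref{nontrinv22} combined with dyadic bounds on $\delta^2 u$, and then apply Proposition \ref{propmain} after rescaling from $B_{1/2}$ to $B_1$. Only a small correction: when $\sigma+\alpha>2$ the even Taylor expansion leaves the quadratic term, so the pointwise bound near $y=0$ is $|\delta^2 u(x,y)|\le C\|u\|_{C^{\sigma+\alpha}(B_1)}|y|^{\min\{2,\sigma+\alpha\}}$ rather than $|y|^{\sigma+\alpha}$, which still beats the $r^{-\sigma}$ dyadic mass from \eqref{nontrinv22} since $\sigma<2$, so your estimate goes through unchanged.
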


\begin{proof}
Let   $\eta\in C^\infty_c(B_{1})$ be a cutoff function satisfying $\eta\equiv 1$ in $B_{3/4}$.
Then,
\begin{equation}\label{est-etau}
 \|\eta  u\| _{C^{\sigma+{\alpha'}}(\R^n)} \le C  \|u\|_{C^{\sigma+{\alpha'}}(B_1)}.
\end{equation}
In addition, we have
\begin{equation} \label{eqtruncated}
0 = \cI\bigl(\eta u + (1-\eta) u,x\bigr) =   \inf_{a\in \mathcal A} \left( \int_{\R^n} \delta^2 (\eta u) (x, y) K_a(0,  y)\,dy  + \tilde c_a(x) \right)
\end{equation}
where
\[
\tilde c_a(x) = c_{a}(x) + \boldsymbol A (x)+ \boldsymbol B(x),
\]
for
\[ \boldsymbol A(x) =  \int_{\R^n} \delta^2 (\eta  u) (x, y)  \bigl(K_a(x, y)- K_a(0,y) \bigr)\,dy,\]
and
\[ \boldsymbol B( x) = \int_{\R^n} \delta^2 \bigl((1-\eta) u\bigr) ( x, y)  K_a(x,  y)\,dy \]

We next write for $x, x'\in B_{1/2}$,
\[
\begin{split}
\boldsymbol A(x)-\boldsymbol A(x') &=  \int_{\R^n} \delta^2 (\eta  u) (x', y)  \bigl(K_a(x, y)- K_a(x',y) \bigr)\,dy\ +
\\
&\hspace{20pt} +  \int_{\R^n} \bigl(\delta^2 (\eta  u) (x, y)- \delta^2 (\eta  u) (x', y) \bigr)  \bigl(K_a(x, y)- K_a(0,y) \bigr)\,dy.
\\
&= \boldsymbol A_1(x,x') + \boldsymbol A_2(x,x')
\end{split}
\]

Let us now bound $|\boldsymbol A(x)-\boldsymbol A(x')|$.
We will do the case $\nu= \lfloor \sigma+\alpha\rfloor=2$ (the cases $\nu=1$ and $\nu=0$ are very similar).
On the one hand, we obtain
\[
\begin{split}
 \bigl|\boldsymbol A_1(x,x')\bigr| &\le  \int_{\R^n} |\delta^2 (\eta  u) (x', y) |\, \bigl|K_a(x, y)- K_a(x',y) \bigr|\,dy
\\
&\le \int_{B_{1/2}}  |y|^2 \|u\|_{C^{\sigma+\alpha}(B_1)} \, \bigl|K_a(x, y)- K_a(x',y) \bigr|\,dy\ +
\\
& \hspace{20mm}+ \int_{\R^n\setminus B_{1/2}}  \|u\|_{L^\infty(\R^n)}  \bigl|K_a(x, y)- K_a(x',y) \bigr|\,dy
\\
&\le C A_0 |x-x'|^\alpha \bigl(\|u\|_{C^{\sigma+\alpha}(B_1)} + \|u\|_{L^\infty(\R^n)} \bigr),
\end{split}
\]
where we have used  \eqref{nontrinv22}.

On the other hand, letting $d =|x-x'|$  we have
\[ \bigl|\delta^2 (\eta  u) (x, y)- \delta^2 (\eta  u) (x', y) \bigr| \le
\begin{cases}
 |y|^2 d^{\sigma+\alpha-2}  \|u\|_{C^{\sigma+\alpha}(B_1)} &\quad y \mbox{ in } B_d\\
 d^2 |y|^{\sigma+\alpha-2}  \|u\|_{C^{\sigma+\alpha}(B_1)} &\quad y \mbox{ in } B_{1/2}\setminus B_d\\
 d^{\alpha}  \|u\|_{C^{\alpha}(\R^n)} &\quad y \mbox{ outside } B_{1/2}.
 \end{cases}
 \]
Combining this and \eqref{nontrinv22} we readily obtain
\[
\begin{split}
 \bigl|\boldsymbol A_2(x,x')\bigr| &\le  \int_{\R^n} \bigl|\delta^2 (\eta  u) (x, y)- \delta^2 (\eta  u) (x', y) \bigr|\,  \bigl|K_a(x, y)- K_a(0,y) \bigr|\,dy
\\
&\le C  d^\alpha A_0  |x-0|^\alpha \bigl(\|u\|_{C^{\sigma+\alpha}(B_1)} + \|u\|_{C^\alpha(\R^n)} \bigr),
\end{split}
\]

Hence,
\begin{equation}\label{boldA}
[ \boldsymbol A]_{C^\alpha(B_{1/2})}  \le  C A_0  \bigl(\| u\|_{C^{\sigma+\alpha}(B_1)}+ \| u\|_{C^{\alpha}(\R^n)}\bigr).
\end{equation}

On the other hand, letting  $\tilde u = (1-\eta)u$ and using that $\tilde u \equiv 0$ in $B_{3/4}$, we obtain with similar computations
\[
\begin{split}
\bigl| \boldsymbol B( x) - \boldsymbol B( x') \bigr|
& \le  \int_{\R^n} \bigl| \delta^2 \tilde u  ( x, y)  - \delta^2 \tilde u ( x', y)  \bigr|  \,| K(x,y)|\,dy \ +
\\
&\hspace{25mm}+ \int_{\R^n}  | \delta^2\tilde u ( x, y)|     \,\bigl| K_a(x,  y)- K_a(x',  y)\bigr| \,dy
\\
&\le C(\Lambda+ A_0) |x-x'|^\alpha \|u\|_{C^\alpha(\R^n)}.
\end{split}
\]
Hence,
\begin{equation}\label{boldB}
[ \boldsymbol B]_{C^\alpha(B_{1/2})}  \le C(\Lambda+ A_0) \|u\|_{C^\alpha(\R^n)}.
\end{equation}

Therefore, using \eqref{boldA} and \eqref{boldB}, and recalling that we assume that $A_0 \le 1$,  we obtain
\begin{equation}\label{tildecdex}
\bigl[ \tilde c_a(x)\bigr]_{C^\alpha(B_{1/2})} \le  C_0  + C A_0 \|u\|_{C^{\sigma+\alpha}(B_1)} + C\| u\|_{C^\alpha(\R^n)}
\end{equation}
where $C$ depends only on $n$, $\sigma$, $\lambda$ and $\Lambda$.

We have thus proven that  the function $\eta u$ belongs to  $C^{\sigma+{\alpha'}}(\R^n)$ with the control \eqref{est-etau} on this norm  and solves the equation \eqref{eqtruncated} in $B_{1/2}$ with $\tilde c(x)$ satisfying \eqref{tildecdex}. Hence, $\eta u$ satisfies the assumptions  of Proposition \ref{propmain}
and therefore \eqref{thegoal} follows from the estimate provided by the same proposition.
\end{proof}

As a last ingredient for the proof of Theorem \ref{thm}, we recall the adimensional H\"older seminorms  from the classical book Gilbarg-Trudinger \cite{GT}.
We next recall the definition of the adimensional $C^\beta$ seminorm from Section 4 of \cite{GT}. Let $\beta> 0$ and let $k$ be the integer such that $\beta =k+\beta'$ for some $\beta'\in (0,1]$. Then,
\[  [u]_{\beta;\Omega} ^* = \sup_{x,y\in \Omega, |l|=k}  (d_{x,y})^{\beta} \frac{|D^l u (x) - D^lu(y)|}{|x-y|^{\beta'}}, \]
where $d_{x,y}:= \min \{ {\rm dist}(x,\partial\Omega), {\rm dist}(y, \partial \Omega)\}$.

We next give the

\begin{proof}[Proof of Theorem \ref{thm}]
Let $\rho \in (0,1)$  and $z\in B_1$ be such that $B_\rho(z)\subset B_1$. Let $\bar u(\bar x) = u (z+\rho\bar x)$.
The function $\bar u$ solves in $B_1$ the rescaled equation
\begin{equation}\label{rescaledeqn}
\bar \cI(\bar u,\bar x)= \min_a \left( \int_{\R^n} \delta^2 \bar u(\bar x,\bar y) \rho^{n+\sigma}K_a(z+\rho\bar x,\rho\bar y)\, d\bar y  + \rho^\sigma c_a(z+\rho\bar x) \right) =0
\end{equation}
in $B_1$.
Note that if the kernels $K_a(x,y)$ of the original operator $\cI$ satisfy  \eqref{nontrinv1}-\eqref{nontrinv22}-\eqref{nontrinv2}, then the rescaled  kernels
\[\bar K_a(\bar x,\bar y) :=\rho^{n+\sigma}K_a(z+\rho\bar x,\rho\bar y)\]
of $\bar\cI$ also satisfies  \eqref{nontrinv1}-\eqref{nontrinv22}-\eqref{nontrinv2}  with the same constants $\lambda$, $\Lambda$, $A_0$, $C_0$ as those of $\cI$.
In fact, we have
\[\begin{split}
\int_{B_{2r}\setminus B_r} \bigl| \bar K_a(\bar x,\bar y)- \bar K_a(\bar x',\bar y)\bigr| d\bar y &=
\\
& \hspace{-40pt}=  \int_{B_{2\rho r}\setminus B_{\rho r} }
 \rho^{n+\sigma} \bigl| K_a(z+\rho\bar x, y)- K_a(z+\rho\bar x,  y)\bigr|  \frac{dy}{\rho^n}
\\
& \hspace{-40pt} \le A_0 |\rho(\bar x- \bar x')|^\alpha \frac{2-\sigma}{r^\sigma}
\end{split}
\]
Hence, as it will be used on in this proof, $\bar \cI$ satisfies \eqref{nontrinv22} with $A_0$ replaced by $\rho^\alpha A_0\le A_0$.

Let $\nu= \lfloor\sigma+\alpha\rfloor$ and $\alpha' = \alpha'(\sigma, \alpha,\nu)$ be given by \eqref{alpha'}. Since $\sigma+\alpha>\nu$ by assumption (it is not an integer) we have $\alpha'\in(0,\alpha)$ and $\nu<\sigma+\alpha'$.
Then,  assuming that $A_0\le1$, Proposition \ref{corprop} applied to $\bar u$ yields
\begin{equation}\label{thegoal2}
 [\bar u]_{C^{\sigma+\alpha}(B_{1/4})}\le C\bigl(\|\bar u\|_{C^{\sigma+{\alpha'}}(B_1)} + A_0 \|\bar u\|_{C^{\sigma+\alpha}(B_1)} +  \|u\|_{C^{\alpha}(\R^n)} + C_0 \bigl),
\end{equation}
where $C_0$ is the constant from \eqref{nontrinv2} and  $C$ depends only on $n$, $\sigma$, $\alpha$,  $\lambda$,  $\Lambda$.

Using standard  interpolation inequalities in $B_1$ to control the full  norm $\|\cdot\|_{C^{\sigma+\alpha}(B_1)}$ by  $[\,\cdot\,]_{C^{\sigma+\alpha}(B_1)} +\|\cdot\|_{L^\infty(B_1)}$,   and scaling back \eqref{thegoal2} from $\bar u$ to $u$,  we obtain
\[
\rho^{\sigma+\alpha}[u]_{C^{\sigma+\alpha}(B_{\rho/4(z)})}\le C\bigl( \rho^{\sigma+{\alpha'}}[u]_{C^{\sigma+{\alpha'}}(B_\rho(z))} +  A_0 \rho^{\sigma+\alpha}[u]_{C^{\sigma+\alpha}(B_{\rho}(z))}
+\| u \|_{C^\alpha(\R^n)} + C_0 \bigr).
\]

The previous estimate holds in every ball $B_\rho(z)\subset B_1$ and this immediately yields, in terms of the adimensional H\"older norms,  that
\[ [u]^*_{\sigma+\alpha; B_1} \le C\bigl( [u]^*_{\sigma+{\alpha'};B_1} + A_0 [u]^*_{\sigma+\alpha;B_1} +\| u \|_{C^\alpha(\R^n)} + C_0\bigr).\]
Then, assuming that  $A_0\le \varepsilon_0$, with $\varepsilon_0$ small enough (depending only on $n$, $\sigma$, $\lambda$, and $\Lambda$), and using the interpolation inequality for adimensional H\"older norms \cite[Lemma 6.32 in Section 6.8]{GT}
\[ [u]^*_{\sigma+{\alpha'}; B_1} \le \delta [u]^*_{\sigma+\alpha; B_1} + C(\delta) \|u\|_{L^\infty(B_1)}\]
we obtain
\[  [u]^*_{\sigma+\alpha; B_1} \le C(\| u \|_{C^\alpha(\R^n)}+ C_0).\]
Since clearly $[u] _{C^{\sigma+\alpha}(B_{1/2})} \le C[u]^*_{C^{\sigma+\alpha}(B_1)}$  we have proven the theorem in the case of $A_0 \le \varepsilon_0 \ll 1$.

Let us prove now the Theorem also for non-small  $A_0$.
We only need to use a typical scaling trick.
Let as before $z\in B_{1/2}$ and $\rho\in (0,1]$ such that  $B_\rho(z)\subset B_1$.
We have already seen that if the function $\bar u = u(z+\rho\bar x)$ solves the rescaled equation \eqref{rescaledeqn} in $B_1$ and that $A_0$  in the new equation by $\rho^\alpha A_0$.
Therefore, choosing $\rho$ small enough ---depending on $A_0$ and $\alpha$--- so that $\rho^\alpha A_0 \le \varepsilon_0$ we may apply the previous estimate to the rescaled equation to obtain
\[ [\bar u] _{C^{\sigma+\alpha}(B_{1/2})} \le C(\|\bar u \|_{C^\alpha(\R^n)}+ C_0) \]
and thus
\[ [u] _{C^{\sigma+\alpha}(B_{\rho/2}(z))} \le C\rho^{-\sigma-\alpha}(\|\bar u \|_{C^\alpha(\R^n)} + C_0 ).\]
Since a finite number of these balls cover $\overline {B_{1/2}}$, the estimate of the Theorem follows.
\end{proof}

To end the section we give the

\begin{proof}[Proof of Corollary \ref{Lalpha}]
First, note that using for instance the H\"older estimate in \cite{CS},  the solution $u$ belongs to $C^\alpha(\overline {B_{7/8}})$ with and estimate ---note that $\alpha<\bar\alpha$ with $\bar \alpha$ small enough.
Let $\eta\in C^\infty_c(B_1)$ be a smooth cutoff function with $\eta\equiv 1$ in $B_{6/8}$ and $\eta \equiv0$ outside $B_{7/8}$.

Then, $\eta u \in C^\alpha(\R^n)$ solves the following equation in $B_{5/8}$:
\begin{equation} \label{eqtruncated}
0 = \cI\bigl(\eta u + (1-\eta) u,x\bigr) =   \inf_{a\in \mathcal A} \left( \int_{\R^n} \delta^2 (\eta u) (x, y) K_a(x,  y)\,dy  + \tilde c_a(x) \right)
\end{equation}
where
\[
\tilde c_a(x) = c_{a}(x) + \int_{\R^n} \delta^2 \bigl((1-\eta) u\bigr) ( x, y)  K_a(x,  y)\,dy.
\]
Using the additional assumption \eqref{nontrinv3} and the fact that  $(1-\eta)\equiv 0$ in $B_{6/8}$ we readily show that $\|\tilde c_a\|_{C^\alpha(B_{5/8})}\le C_0 +C \|u\|_{L^\infty(\R^n)}$.
Therefore, under the assumptions of the Corollary, the function $\eta u \in C^\alpha(\R^n)$ solves an equation in $B_{5/8}$ that satisfies the assumptions of Theorem \ref{thm} with $B_1$ replaced by $B_{5/8}$.
Then, applying the estimate of Theorem \ref{thm} (rescaled) to the function  $\eta u$ the Corollary follows ---since the equation is satisfied in $B_{5/8}$ instead of $B_1$ we need to use the standard covering argument to obtain the estimate in $B_{1/2}$ instead of $B_{5/16}$.
\end{proof}

\section{Approximation procedure for non translation invariant equations}
\label{sec:approx}

In this section we show a way of approximating a non translation invariant equation $\cI(u,x)=0$ in $B_1$ of the form  \eqref{nontrinv} and satisfying \eqref{nontrinv1}-\eqref{nontrinv22}-\eqref{nontrinv2} by a sequence of equations
$\cI^\epsilon(u^\epsilon,x)=0$ that admit $C^3$ solutions in $B_1$. This approximation procedure is modification of the one in \cite{CS3}.

For $\epsilon\in(0,1)$, let
\begin{equation}\label{Bnontrinv}
\cI^\epsilon (u^\epsilon,x) :=  \inf_{a\in \mathcal A} \left( \int_\R \delta^2 u^\epsilon(x,y) K^\epsilon_a(x,y)\,dy  + c^\epsilon_a(x) \right)
\end{equation}
where, for all $a\in \mathcal A$ and for all $x$ in $B_1$ and $y \in \R^n\setminus \{0\}$, we have
\begin{equation}\label{Kepsilon}
\begin{split}
 K_a^ \epsilon (x,y) &=
 \xi\left(\frac{y}{4\epsilon}\right)\frac{(2-\sigma)}{|y|^{n+\sigma}}\ +
   \\
 & \hspace{10mm}+\left(1-\xi\left(\frac{y}{4\epsilon}\right) \right) \int_{\R^n} \frac{d\bar x}{\epsilon^n}\int_{\R^n}\frac{d\bar y}{\epsilon^n} K_a(x-\bar x, y-\bar y)\, \eta\left(\frac{\bar x}{\epsilon}\right)\eta\left(\frac{\bar y}{\epsilon}\right),
 \end{split}
\end{equation}
and
\begin{equation}\label{cepsilon}
 c_a^ \epsilon (x) = \int_{\R^n} \frac{d\bar x}{\epsilon^n} c_a(x-\bar x)\, \eta\left(\frac{\bar x}{\epsilon}\right),
\end{equation}
for some $\xi\in C^\infty_c(B_1)$ with $\xi\equiv 1$ in $B_{1/2}$ and for some $\eta\in C^\infty_c(B_1)$ with $\eta\ge0$ and $\int\eta =1$.

\begin{rem}\label{cttsofapprox}
Note that the operator $\cI^\epsilon$  satisfies \eqref{nontrinv1}-\eqref{nontrinv22}-\eqref{nontrinv2} ---as $\cI$--- with the same constants $A_0$, $C_0$, and with $\lambda$, $\Lambda$  replaced by $\lambda/C$, $C\Lambda$ respectively. If in addition the operator
$\cI$ satisfies \eqref{nontrinv3} then so does  $\cI^\epsilon$ again with $\Lambda$ being replaced by  $C\Lambda$.
Note in addition that $\cI^\epsilon \rightarrow \cI$ in weakly in $B_1$ and with the weight $\omega_\sigma$---for the notion of weak convergence of nonlocal elliptic operators see \cite{CS2}.
\end{rem}

We will prove the following
\begin{prop}\label{propapproximat}
For all $\epsilon>0$, the Dirichlet problem
\begin{equation}\label{dirichletapprox}
\begin{cases}
\cI^\epsilon(u^\epsilon, x)=0 \quad &\mbox{in  }B_1\\
u= g & \mbox{outside }B_1
\end{cases}
\end{equation}
with bounded $g\in C(\R^n\setminus B_1)$ admits a unique solution $u^\epsilon \in C(\R^n)\cap C^3(B_1)$.
\end{prop}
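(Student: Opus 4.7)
The plan is to prove the proposition in three stages: classical solvability in tiny balls for $\cI^\epsilon$, a Perron-type construction giving a continuous viscosity solution in $B_1$, and interior $C^3$ regularity via bootstrap. Uniqueness will then come for free because the constructed solution is classical.

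For the first stage, I would exploit that by the construction \eqref{Kepsilon}, each kernel $K_a^\epsilon(x,y)$ is $C^\infty$ in $x$ uniformly in $a$, and $C^\infty$ in $y$ away from the origin; near the origin the singular part is the $a$- and $x$-independent function $\xi(y/4\epsilon)(2-\sigma)/|y|^{n+\sigma}$, and by \eqref{cepsilon} the coefficients $c_a^\epsilon$ are $C^\infty$ in $x$. Thus $\cI^\epsilon$ is the infimum of smooth perturbations of a fixed fractional-Laplacian-type operator. Linear problems $L_a^\epsilon v = f$ with $C^\alpha$ data admit classical solutions with $C^{\sigma+\alpha}$ estimates by standard linear theory, bootstrapping to $C^3$ thanks to the smoothness in $x$ and in $y$ away from $0$. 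For the concave nonlinear operator, I would apply the method of continuity on a small ball $B_r(x_0) \subset B_1$, interpolating $\cI^\epsilon_t = (1-t) L_0^\epsilon + t \cI^\epsilon$ with $L_0^\epsilon$ a fixed linear member of the family. Openness of the solvable set follows from invertibility of the linearizations on H\"older spaces, while closedness follows from the uniform a priori $C^{\sigma+\alpha}$ estimate of Theorem \ref{thm} applied to every $\cI^\epsilon_t$ (with uniform constants, by Remark \ref{cttsofapprox}), followed by bootstrap to $C^3$.

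For the second stage I would apply Perron's method, defining
\[
u^\epsilon(x) := \sup\bigl\{ v(x) : v \in C(\R^n) \text{ is a viscosity subsolution of } \cI^\epsilon \text{ in } B_1,\ v = g \text{ on } \R^n\setminus B_1 \bigr\}.
\]
Stability of subsolutions under suprema and uniform convergence gives that $u^\epsilon$ is a viscosity subsolution in $B_1$, and continuous attainment of $g$ on $\partial B_1$ follows from classical barriers built with $M^\pm_{\mathcal L_0}$ and the modulus of continuity of $g$. The supersolution property is where classical solvability in tiny balls replaces viscosity comparison, as emphasized in the introduction: if $u^\epsilon$ failed to be a supersolution at some $x_0 \in B_1$, I would replace it on a tiny ball around $x_0$ by the classical solution of $\cI^\epsilon(\tilde u,x) = 0$ having $u^\epsilon$ as exterior data (from Stage 1); since $\tilde u$ is classical, it dominates any subsolution with the same exterior data, so $\max(u^\epsilon,\tilde u)$ is a strictly larger admissible subsolution, contradicting the supremum.

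For the third stage I would apply the a priori $C^{\sigma+\alpha}$ estimate of Theorem \ref{thm} (using Remark \ref{cttsofapprox}) to the viscosity solution $u^\epsilon$, obtaining $u^\epsilon \in C^{\sigma+\alpha}_{\rm loc}(B_1)$. Then I would bootstrap: incremental quotients of $u^\epsilon$ in the $x$-variable satisfy uniformly elliptic nonlocal equations of order $\sigma$ with $C^\alpha$ right-hand sides (because $K_a^\epsilon$ and $c_a^\epsilon$ are $C^\infty$ in $x$), and iterating the Schauder-type estimate yields $u^\epsilon \in C^3(B_1)$. The main obstacle is Stage 1: proving classical solvability directly, without a viscosity comparison principle for the non translation invariant $\cI^\epsilon$, and closing the continuity method through the $C^{\sigma+\alpha}$-to-$C^3$ bootstrap chain with estimates uniform along the continuity path. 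Stage 2 also depends on this uniformity because it is Stage 1, not comparison between arbitrary viscosity sub- and supersolutions, that powers the Perron supersolution argument.
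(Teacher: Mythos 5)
Your Stages 2 and the overall architecture (classical solvability in tiny balls feeding a Perron construction, with comparison only between a classical solution and a viscosity subsolution) match the paper. The genuine gap is in how you propose to obtain Stage 1 and Stage 3. First, you invoke Theorem \ref{thm} both for closedness in the continuity method and to upgrade the Perron solution: but Theorem \ref{thm} is an \emph{a priori} estimate, valid only for functions already known to lie in $C^{\sigma+\alpha}(B_1)\cap C^\alpha(\R^n)$, and it is purely interior. It cannot be applied to a function that is so far only a continuous viscosity solution, it does not give the up-to-the-boundary compactness that closedness of a continuity path in a small ball requires, and inside Perron's method the exterior datum of the tiny-ball problem is only a bounded continuous subsolution, not a $C^\alpha(\R^n)$ function. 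This is exactly the circularity the approximation procedure is designed to avoid: the point of Proposition \ref{propapproximat} is to manufacture $C^3$ solutions \emph{without} using the a priori estimates, so that in Theorem \ref{thm2} those estimates can legitimately be applied to $u^\epsilon$ and passed to the limit $\epsilon\searrow 0$. Second, the openness step of your continuity method rests on ``invertibility of the linearizations'', but $\cI^\epsilon$ is an infimum of linear operators and hence not differentiable, so no linearization is available in the sense you need; and tellingly, nothing in your Stage 1 uses that the ball is tiny, so if the argument worked it would give solvability in $B_1$ directly, which should raise suspicion.

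The paper's mechanism for the Claim is different and more elementary: for $\delta\ll\epsilon$ one rescales the ball $B_\delta(z)$ to unit size, and by the construction \eqref{Kepsilon} the rescaled kernels coincide exactly with the fractional Laplacian kernel in the huge ball $B_{2\epsilon/\delta}$, so the rescaled equation reads $-(-\Delta)^{\sigma/2}\bar w+\mathcal N_\delta(\bar w,\bar x)=0$ with $\|\mathcal N_\delta(w,\cdot)\|_{C^3}\le\gamma_\delta\|w\|_{L^\infty(\R^n)}$ and Lipschitz constant $\gamma_\delta\searrow 0$ as $\delta/\epsilon\searrow 0$. Classical solvability (with the $C^3$ bound \eqref{estimatew}, valid for arbitrary bounded continuous exterior data) then follows from the Banach fixed point theorem applied to $w\mapsto \mathcal I_{\bar h}[\mathcal N_\delta(w,\cdot)]$, using only the solvability and $L^\infty$ estimate for the Dirichlet problem of the fractional Laplacian; the interior $C^3$ regularity of the Perron function is inherited from \eqref{estimatew} via Arzel\`a--Ascoli, with no Schauder bootstrap and no use of Theorem \ref{thm} at all. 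To repair your proof you would need either to replace Stage 1 by such a perturbative argument (or another scheme not relying on differentiability of $\cI^\epsilon$ or on interior-only a priori bounds), and to drop the application of Theorem \ref{thm} to the viscosity solution in Stage 3 in favor of regularity obtained directly from the tiny-ball solvability.
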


\begin{proof}
To show that, for all $\epsilon>0$ the Dirichlet problem \eqref{dirichletapprox} admits a  unique solution $u\in C(\R^n)\cap C^3(B_1)$ we will use Perron's method.
Since a comparison principle between viscosity solutions is not available for non translation invariant nonlocal fully nonlinear equations,  the use of Perron's method will be based in  the following crucial observation
(existence of smooth solutions in tiny balls for the regularized equation).

\vspace{5pt}
\noindent { \bf Claim.}{ \em
Given $\epsilon>0$, there is $\delta_0>0$ with $\delta_0 \ll \epsilon$ such that whenever $B_{\delta}(z)$ is a ball contained in $B_1$ with $\delta\in (0,\delta_0)$  there exists a unique solution $w\in C(\R^n)\cap C^3(B_\delta)$ to the Dirichlet problem
\begin{equation}\label{dirichletapprox}
\begin{cases}
\cI^\epsilon(w, x)=0 \quad &\mbox{in  }B_\delta(z)\\
w= h & \mbox{in  }\R^n \setminus B_\delta(z)
\end{cases}
\end{equation}
for all continuous complement data $h$ with  $\|h\|_{ L^\infty(\R^n\setminus B_\delta)}\le 1$.

Moreover, the function $w$ satisfies
\begin{equation}\label{estimatew}
\|w\|_{C^3(B_\delta(z))} \le C \|h\|_{L^\infty(\R^n)}
\end{equation}
where $C$ depends on $n$, $\lambda$, $\Lambda$, $\epsilon$, and $\delta$.
}
\vspace{5pt}

To prove the Claim, for fixed $\epsilon>0$ we rescale the operator $\cI^\epsilon$ as follows
\[ \cI^\epsilon(w,z+ \delta \bar x) = \delta^{-\sigma} \bar \cI(w(z+\delta \,\cdot\,), \bar x).\]
Note that the kernels that define the new operator $\bar \cI$ are smooth and coincide to that of the fractional  Laplacian inside of a large ball $B_{\epsilon/\delta}$ (recall that $\delta \ll \epsilon$). Hence,
writing $C \bar \cI(\bar w,\bar x) = (-\Delta)^{\sigma/2} \bar w (x)+ \mathcal N_\delta (\bar w, \bar x)$ for the rescaled function $\bar w = w(z+\delta \,\cdot\,)$ the problem \eqref{dirichletapprox} takes the form
\begin{equation}\label{dirichletapproxrescaled}
\begin{cases}
-(-\Delta)^{\sigma/2} \bar w +   \mathcal N_\delta(\bar w, \bar x) =0\quad &\mbox{in  }B_1\\
\bar w= \bar h & \mbox{in  }\R^n \setminus B_1,
\end{cases}
\end{equation}
where $x = z+\delta \bar x$,
\[\mathcal N_\delta(\bar w, \bar x) =  C \inf_{a\in \mathcal A} \left( \int_\R \delta^2 u^\epsilon(x, \bar y) \bigl( \delta^{n+\sigma} K^\epsilon_a(\bar x,\delta y) - (2-\sigma)|y|^{-n-\sigma}\bigr)\,dy  + \delta^{\sigma} c^\epsilon_a(x) \right) \]
and  $\bar h(\bar x) = h(z+\delta\bar x)$.

Notice that
 \[\delta^{n+\sigma} K^\epsilon_a(\bar x,\delta y) - (2-\sigma)|y|^{-n-\sigma}\equiv 0\quad \mbox{in }B_{2\epsilon/\delta}\]
 by the definition of $K^\epsilon_a$ in \eqref{Kepsilon}. Then, it is straightforward to verify ---using \eqref{Kepsilon} and \eqref{cepsilon}---that
\begin{equation}\label{theN}
\| \mathcal N_\delta(w, \,\cdot\,)\|_{C^3(\overline{B_1})} \le \gamma_{\delta} \|w\|_{L^\infty(\R^n)}
\end{equation}
and
\begin{equation}\label{theN2}
\| \mathcal N_\delta(w, x) - \mathcal N_\delta(w', x) \|_{L^\infty({B_1})} \le \gamma_{\delta} \|w-w'\|_{L^\infty(\R^n)}
\end{equation}
for every $w, w'\in L^\infty(\R^n)$ where
\[ \gamma_\delta \searrow 0  \quad \mbox{as}\quad (\delta/\epsilon) \searrow 0.\]

Therefore, a solution $w\in L^\infty(\R^n)\cap C^3(B_1)$ to \eqref{dirichletapproxrescaled} can be then constructed using the solvability of the Dirichlet problem with the fractional Laplacian and  the Banach fixed point theorem.
Indeed, let \[\mathcal I_{\bar h} [f] =: v\] be the unique solution to
\begin{equation}\label{blabla}
\begin{cases}
(-\Delta)^{\sigma/2}  v  = f  \quad &\mbox{in  }B_1\\
v = \bar h & \mbox{in  }\R^n \setminus B_1,
\end{cases}
\end{equation}
Then,  \eqref{dirichletapproxrescaled} can be restated as a fixed point problem as
\[\mathcal I_{\bar h} \bigl[ \mathcal N_\delta(w, \,\cdot\,) \bigr] = w.\]
The contractivity of the previous map in the ``closed ball'' $\{ w \,: \, \|w\|_{L^\infty(B_1)} \le 2\}$ when $\delta/\epsilon \ll 1$ follows form \eqref{theN}-\eqref{theN2} and  the elementary estimate for \eqref{blabla}
\[ \|v\|_{L^\infty(B_1)} \le \|\bar h\|_{L^\infty(\R^n \setminus B_1)} + C\|f\|_{L^\infty(B_1)}.\]

The continuity up to the boundary of $w$ ---with implies the uniqueness of solution to \eqref{dirichletapprox} in the class of viscosity solutions--- follows from the results in  the Section 3 of \cite{CS2}.
This finishes the proof of the Claim.
\vspace{5pt}

The previous Claim makes now it simple to apply of Perron's method to show existence of solution. As usual,  we consider the following candidate to viscosity solution to \eqref{dirichletapprox}:
\begin{equation}\label{Perron}
 u^\epsilon(x) = \sup \bigl\{ w(x) \,: \, w\in C(\overline \Omega) \mbox{ is a viscosity subsolution of }\eqref{dirichletapprox} \bigr\}.
\end{equation}
Using the Claim, and the barriers from Section 3 in \cite{CS2}, the ideas of the classical proof by Perron's method of the existence of a harmonic function with given continuous boundary data in smooth domains apply to this case, since we also have solvability in balls (in our case tiny ones).
We obtain that $u^\epsilon$ solves classically the equation in the interior and attains continuously the complement data.

Indeed, as for harmonic functions, in the supremum of \eqref{Perron}  defining $u(x)$, for every $\delta\in(0,\delta_0)$ such that  $B_\delta(z)\subset B_1$ we can replace the subsolution $w$ by the solution in $B_\delta(z)$ with its  same values outside.
The new function will be larger by the comparison principle between a viscosity an a smooth solutions.
It then follows using \eqref{estimatew} and Arzel\`a-Ascoli that $u^\epsilon$ belongs $C^3(B_1)$, and that it is a solution to the equation in the interior of $B_1$.
That $u^\epsilon$ defined as in \eqref{Perron} is  continuous  function up to the boundary attaining the complement data follows from standard barrier arguments, employing the barriers from Section 3 of \cite{CS}.
\end{proof}

The remaining part of this section will be devoted to the proof of Theorem \eqref{thm2}. In it, we will need the following Proposition.

\begin{prop}\label{lemcosa}
Let $\sigma\in (0,2)$, and $\lambda$, $\Lambda$ be given constants with $0<\lambda\le \Lambda$.
Then, there exists $\gamma\in (0,1)$ depending only on $n$, $\sigma$, $\lambda$, $\Lambda$ such that the following statement holds.

Let $\alpha\in (0,\gamma)$ and assume that $u\in C^{\sigma + \alpha}(B_1)\cap C(\R^n)$ is a solution to
\begin{equation}\label{cbonet}
\begin{cases}
M^+_{\mathcal L_0} u \ge -C_0 \quad & \mbox{in } B_1\\
M^+_{\mathcal L_0} u \le C_0  &\mbox{in } B_1\\
u=g &\mbox{in } \R^n \setminus B_1,
\end{cases}
\end{equation}
with $g\in C^\alpha(\R^n\setminus B_1)$.
Then, $u \in C^\alpha(\R^n)$ with the estimate
\begin{equation}\label{estalphaalpha}
 \|u\|_{C^\alpha(\R^n)}\le C\bigl( \|g\|_{C^\alpha(\R^n \setminus B_1)}+C_0\bigr),
\end{equation}
where $C$ depends only on $n$, $\sigma$, $\lambda$, $\Lambda$, and $\alpha$.
\end{prop}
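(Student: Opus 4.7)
The plan is to combine interior H\"older regularity from \cite[Theorem 12.1]{CS} with a boundary H\"older estimate obtained via explicit barriers, and then to glue the two via a dyadic scaling argument. I would first establish a global $L^\infty$ bound $\|u\|_{L^\infty(\R^n)} \le \|g\|_{L^\infty(\R^n\setminus B_1)} + CC_0$ by comparison with suitable multiples of $(1-|x|^2)_+^{\sigma/2}$, whose Pucci extremals are explicitly controlled. The interior estimate of \cite[Theorem 12.1]{CS}, applied to the rescaling $u(x + (d(x)/2)\,\cdot\,)$ with $d(x) := 1-|x|$, then gives
\[
[u]_{C^\gamma(B_{d(x)/2}(x))} \le C\,d(x)^{-\gamma}\bigl(\|u\|_{L^\infty(\R^n)} + C_0\bigr),
\]
for some universal exponent $\gamma = \gamma(n,\sigma,\lambda,\Lambda) \in (0,1)$. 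I would take $\bar\alpha := \gamma$, so that $\alpha<\gamma$ in what follows.

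The core of the argument is the boundary H\"older estimate: for every $x_0 \in \partial B_1$,
\begin{equation}\label{bdryest}
|u(x) - g(x_0)| \le C\bigl(\|g\|_{C^\alpha(\R^n\setminus B_1)} + C_0\bigr)|x - x_0|^\alpha, \quad x\in \R^n.
\end{equation}
Outside $B_1$ this is immediate from $g \in C^\alpha$. Inside $B_1$, I would construct an upper barrier of the form $\phi^+(x) = g(x_0) + K\min\{|x-x_0|^\alpha, M\} + L(1-|x|^2)_+^{\sigma/2}$, with $K = C(\|g\|_{C^\alpha}+C_0)$ and appropriately chosen constants $M,L$, together with a symmetric lower barrier $\phi^-$. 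The main computation shows that, for $\alpha$ small enough depending only on ellipticity, $M^+_{\mathcal L_0}(|\cdot - x_0|^\alpha)(x) \le C|x-x_0|^{\alpha-\sigma}$ in $B_1\setminus\{x_0\}$; combined with the strongly negative contribution of $L(1-|x|^2)_+^{\sigma/2}$, this yields $M^+_{\mathcal L_0}\phi^+ \le -C_0$ in $B_1$. Since $\phi^+ \ge u$ on $\R^n\setminus B_1$ by construction, the standard Pucci comparison principle gives $\phi^+ \ge u$ in $B_1$, and symmetrically $\phi^- \le u$, establishing \eqref{bdryest}.

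The global $C^\alpha(\R^n)$ bound then follows by a dyadic matching. For $x \in B_1$ and $y \in \R^n \setminus B_1$, apply \eqref{bdryest} at $x_0 = y/|y|$. For $x,y \in B_1$ set $d := \min(d(x),d(y))$: when $|x-y| \le d/2$, the interior $C^\gamma$ estimate combined with \eqref{bdryest} at the projection $\bar x = x/|x|$ (which bounds $\mathrm{osc}_{B_d(x)} u$ by $Cd^\alpha$) yields $|u(x)-u(y)|\le C|x-y|^\gamma d^{\alpha-\gamma}\le C|x-y|^\alpha$; when $|x-y| > d/2$, estimate \eqref{bdryest} at $\bar x$ bounds both $|u(x)-g(\bar x)|$ and $|u(y)-g(\bar x)|$ directly. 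The main technical obstacle is verifying the barrier sign $M^+_{\mathcal L_0}(|\cdot-x_0|^\alpha) \le C|\cdot-x_0|^{\alpha-\sigma}$ uniformly over all kernels in $\mathcal L_0$, which is where the smallness $\alpha < \gamma$ is crucially used, via a careful splitting of the second-difference integrand into its dominant near-origin singular behavior and its bounded far-field tail.
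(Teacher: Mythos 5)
Your overall architecture (global $L^\infty$ bound, interior H\"older estimate at scale $d(x)$, boundary estimate of exponent exactly $\alpha$ via barriers, dyadic gluing) is reasonable, and the gluing step at the end is standard. But the core of your argument --- the boundary estimate $|u(x)-g(x_0)|\le C(\|g\|_{C^\alpha}+C_0)|x-x_0|^\alpha$ via the barrier $\phi^+(x)=g(x_0)+K\min\{|x-x_0|^\alpha,M\}+L(1-|x|^2)_+^{\sigma/2}$ --- has a genuine gap: neither ingredient of $\phi^+$ has the sign you need under $M^+_{\mathcal L_0}$. The ``strongly negative contribution'' of $(1-|x|^2)_+^{\sigma/2}$ is a property of the fractional Laplacian (for which this function solves an equation with constant right-hand side), not of the extremal operator: $M^+_{\mathcal L_0}$ reweights positive second differences with $\Lambda$ and negative ones with $\lambda$, and near $\partial B_1$ this produces a \emph{positive} quantity blowing up like ${\rm dist}(x,\partial B_1)^{-\sigma/2}$ (in the half-space model, $(-\Delta)^{\sigma/2}(x_n)_+^{\sigma/2}=0$ but $M^+_{\mathcal L_0}(x_n)_+^{\sigma/2}=(\Lambda-\lambda)\int(\delta^2)^+(2-\sigma)|y|^{-n-\sigma}dy>0$, of order $x_n^{-\sigma/2}$ by scaling). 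Similarly, $|x-x_0|^\alpha$ is a two-sided radial cusp, not a one-sided distance function, and in dimension $n\ge2$ its $M^+_{\mathcal L_0}$ is \emph{positive} of order $|x-x_0|^{\alpha-\sigma}$ near $x_0$ for small $\alpha$ (the Hessian of $|x|^\alpha$ on the unit sphere is $\approx\alpha(I-2e_r\otimes e_r)$, whose Pucci maximum is $\alpha(\Lambda(n-1)-\lambda)>0$, and the far-field tail also contributes positively); the smallness mechanism that makes ${\rm dist}(\cdot,K)^p$, $K$ convex, a supersolution (as in Lemma 3.3 of \cite{CS2}) does not apply here. Your magnitude bound $M^+_{\mathcal L_0}(|\cdot-x_0|^\alpha)\le C|x-x_0|^{\alpha-\sigma}$ is true but useless, because nothing in $\phi^+$ produces a compensating negative term of that order near $x_0$. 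Hence $M^+_{\mathcal L_0}\phi^+\le -C_0$ fails near $x_0$, the comparison step collapses, and the boundary estimate --- precisely the point where the proposition claims ``no loss of exponent'' --- is not established.

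For contrast, the paper avoids constructing a barrier that vanishes at the exact rate $|x-x_0|^\alpha$: it first invokes Theorem 3.3 of \cite{CS2} to get $C^{\alpha'}$ regularity up to the boundary with some possibly smaller exponent $\alpha'$, and then removes the loss of exponent by a blow-up/compactness argument against the half-space Liouville theorem (Lemma \ref{lemliouville}), which itself only needs the crude one-sided ${\rm dist}^p$ barrier of \cite{CS2} plus interior estimates. If you want to keep a purely barrier-based proof, you would need to build, for each $x_0\in\partial B_1$, a genuine supersolution in $B_1\cap B_r(x_0)$ comparable to $\min\{|x-x_0|^\alpha,1\}$ with $M^+_{\mathcal L_0}\le -c|x-x_0|^{\alpha-\sigma}$ there --- exploiting that near $x_0$ the domain lies on one side, e.g.\ via powers of a regularized exterior distance --- which is a substantially different (and more delicate) construction than the one you wrote, and is exactly the difficulty the compactness route is designed to bypass.
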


\begin{rem}
The only ``novelty'' of the previous proposition with respect to the results in \cite{CS2} is that there is no loss in the exponent: from a $C^\alpha$ exterior data we obtain  $C^\alpha$ regularity  up to the boundary  (the same $\alpha$).
Note that in the proposition  $\gamma$ is small and $\alpha< \gamma$.
Even for a linear translation invariant equations such a result is not true for all $\alpha$.
Indeed, even for the equation $\Delta u=0$ in $B_1$,
it is well-known that Lipschitz boundary data may lead to a non-Lipschitz harmonic extension.
The exponents $1,2,3,...$ (Lipschitz, $C^{1,1}$, $C^{2,1}$, ...) are in some sense critical for the boundary regularity of $\Delta$ because there exist harmonic polynomials that are degree $1,2,3, ...$  and that solve $\Delta u=0$ in $\R^n_+$ and $u=0$ on $\{x_n=0\}$.
Related to this, it is worth it to point out that a small modification of the proof of Propostion \ref{lemcosa} shows that that solutions to $(-\Delta)^s =0$ in $B_1$ with $C^\alpha$ exterior data are $C^\alpha$ up to the boundary whenever $\alpha<s$.
However, we do not expect the result to be true for $\alpha=s$. Again, the criticality of the exponent $s$ comes from the fact that $(x_n)_+^s$ is a solution to the fractional Laplacian equation in the half space.
\end{rem}

Proposition \ref{lemcosa} will follow from an easy blow-up and compactness argument and from the following Liouville type result
\begin{lem} \label{lemliouville}
Let $\sigma$,  $\lambda$, $\Lambda$, $\gamma$, $\alpha$, as in the statement of Proposition \ref{lemcosa}.

Assume that $u\in C(\R^n)$ is a viscosity solution to
\[
\begin{cases}
M^+_{\mathcal L_0} u \ge 0 \quad & \mbox{in } H\\
M^+_{\mathcal L_0} u \le 0  &\mbox{in } H\\
u=0 &\mbox{in } \R^n \setminus H,
\end{cases}
\]
where $H$ is the whole $\R^n$ or some half space, and assume that $u$ satisfies the growth control
\[\|u\|_{L^\infty(B_R)}\le CR^\gamma\]
for all $R\ge 1$.

Then, $u$ is constant ($u\equiv 0$ when $H\neq\R^n$).
\end{lem}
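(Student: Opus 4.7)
The plan is to combine Hölder regularity estimates for solutions of the two-sided Pucci extremal inequalities with a simple scaling argument, choosing the growth exponent $\gamma$ smaller than the Hölder exponent $\bar\gamma$ provided by those estimates. (I read the first two lines of the system as $M^-_{\mathcal L_0} u \le 0 \le M^+_{\mathcal L_0} u$, i.e., $u$ solves the two-sided Pucci inequalities in $H$.)

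\textbf{Interior case $H=\R^n$.} The key input is the interior $C^{\bar\gamma}$ estimate of \cite[Theorem 12.1]{CS} (the same one used in the Remark following Theorem \ref{liouville}): for $v\in L^\infty(B_1)\cap L^1(\R^n,\omega_\sigma)$ satisfying $M^-_{\mathcal L_0} v\le 0\le M^+_{\mathcal L_0} v$ in $B_1$,
\[
[v]_{C^{\bar\gamma}(B_{1/2})} \le C\bigl(\|v\|_{L^\infty(B_1)} + \|v\|_{L^1(\R^n,\omega_\sigma)}\bigr).
\]
Since $\mathcal L_0$ is scale invariant, $u_R(x):=u(Rx)$ satisfies the same inequalities in $\R^n$. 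The growth hypothesis, together with a direct change of variables
\[
\|u_R\|_{L^1(\R^n,\omega_\sigma)} = \int_{\R^n}|u(Rz)|(1+|z|)^{-n-\sigma}\,dz \le C R^{\gamma}\int_{\R^n}(1+|z|)^{\gamma-n-\sigma}\,dz \le C'R^{\gamma},
\]
(which is legitimate provided $\gamma<\sigma$), gives $[u_R]_{C^{\bar\gamma}(B_{1/2})}\le CR^\gamma$, and hence
\[
[u]_{C^{\bar\gamma}(B_{R/2})} \le CR^{\gamma-\bar\gamma}.
\]
Choosing $\gamma<\bar\gamma$ and letting $R\to\infty$ forces all $C^{\bar\gamma}$ seminorms of $u$ on compact sets to vanish, so $u$ is constant.

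\textbf{Half-space case.} Translate so that $0\in\partial H$; then $H$ is invariant under the rescaling $x\mapsto Rx$. The extra ingredient I need is a \emph{boundary} $C^{\bar\gamma}$ estimate: if $v$ satisfies the Pucci inequalities in $B_1\cap H$ and $v\equiv 0$ on $\R^n\setminus H$, then
\[
\|v\|_{C^{\bar\gamma}(\overline{B_{1/2}})} \le C\bigl(\|v\|_{L^\infty(B_1)} + \|v\|_{L^1(\R^n,\omega_\sigma)}\bigr),
\]
for some $\bar\gamma>0$ depending only on $n,\sigma,\lambda,\Lambda$. This follows from the standard barrier argument: a small power $\mathrm{dist}(x,\partial H)^{\bar\gamma}$ satisfies $M^+_{\mathcal L_0}(\mathrm{dist}(\cdot,\partial H)^{\bar\gamma})\le 0$ in $H$ for a suitable $\bar\gamma$, which serves as an upper barrier and, together with the interior Hölder estimate, yields oscillation decay at boundary points by iteration (these are exactly the barriers from \cite[Section 3]{CS2}). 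Applying this boundary estimate to the rescaled $u_R$ exactly as in the interior case gives $[u]_{C^{\bar\gamma}(B_{R/2})}\le CR^{\gamma-\bar\gamma}\to 0$, so $u$ is constant on all of $\R^n$; since $u\equiv 0$ outside $H$ and $u$ is continuous, $u\equiv 0$.

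The main obstacle is the boundary case. The interior part is a soft scaling on top of a well-known estimate, while the half-space case relies on the existence of a universal boundary Hölder exponent $\bar\gamma>0$. Once that exponent (produced by the barrier $\mathrm{dist}(x,\partial H)^{\bar\gamma}$) is in hand, the rest of the argument is identical, and both cases are handled simultaneously by requiring $\gamma<\bar\gamma$ in the statement.
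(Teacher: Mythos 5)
Your argument is correct and is essentially the paper's own proof: rescale using the scale invariance of $\mathcal L_0$, apply the interior H\"older estimate of \cite{CS} together with a boundary barrier of the form ${\rm dist}(\cdot,\partial H)^p$ from \cite{CS2} to get a universal $C^{\bar\gamma}$ bound at unit scale, and let $R\to\infty$ using that the growth exponent is strictly below $\bar\gamma$. The only differences are cosmetic (translating the origin to $\partial H$ instead of the paper's case analysis on whether $\partial \bar H$ meets $B_{1/8}$, and omitting the normalization of the rescaled function), and your explicit remark that the tail integrability requires the growth exponent to be below $\sigma$ matches what the paper uses implicitly.
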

\begin{proof}
Let $\rho \ge 1$ and $\bar u(x) = \rho^{-\alpha} u(\rho x)$. Letting $\bar H = H/\rho$, we have that $\bar u$ solves
\[
\begin{cases}
M^+_{\mathcal L_0} \bar u \ge 0 \quad & \mbox{in } \bar H\cap B_1\\
M^+_{\mathcal L_0} \bar u \le 0 &\mbox{in } \bar H \cap B_1\\
\bar u=0 &\mbox{in } B_1 \setminus \bar   H.
\end{cases}
\]
In addition $\bar u$ satisfies the growth control $\|\bar u\|_{L^\infty(B_R)} = \|\rho^{-\alpha} u\|_{L^\infty(B_{\rho R})} \le CR^{\alpha}$.
Thus, in particular $|u|\le C$  in $B_1$ and $\int_{\R^n} |u(y)| \bigl(1+|y|\bigr)^{-n-\sigma}dy \le C$.

Therefore if follows, using the interior and boundary regularity results from \cite{CS} and \cite{CS2}, that
\begin{equation}\label{estest}
 \|\bar u \|_{C^\gamma(B_{1/16})} \le C,
\end{equation}
for some small $\gamma$ depending only on $n$, $\sigma$, $\lambda$, and $\Lambda$.

Let us next give the details of the proof of \eqref{estest}. There are only two nontrivial cases: that $\bar H$ contains $B_{1/8}$, or that $\partial \bar H$ has nonempty intersection with  $B_{1/8}$.
Otherwise  $\bar H\cap B_{1/4}= \varnothing$ and  \eqref{estest} is trivial since  $u\equiv 0$ in $B_{1/8}$.

In the first case ($B_{1/8}\subset \bar H$),  \eqref{estest} follows from the interior estimates in \cite{CS2}.

In the second case, there will be some point $z$ in the intersection $\partial \bar H\cap B_{1/8}$, and $\bar u$ solves an equation in half of $B_{1/2}(z)$ and vanishes in the
complementary half ball.
Then, a barrier argument shows that, for some small $p>0$,
\begin{equation}\label{aaaaa}
  |\bar u| \le C {\rm dist}\bigl(x, \R^n\setminus \bar H)^p \quad \mbox{in }B_{1/4}(z).
\end{equation}
Indeed,  the function $\psi(x)={\rm dist}\,\bigl(x, B_{1/10}\bigr)^p$ is, for $p$ small enough, a supersolution in the annulus $B_{1/10+\epsilon}\setminus B_{1/10}$,
for some $\epsilon>0$.  Namely, it satisfies $M^+_{\mathcal L_0} \psi \le 0$ there ---see for instance \cite[Lemma 3.3]{CS2}.
Using translates of $C\psi$ (respectively $-C\psi$) as upper  (lower) barrier we readily show \eqref{aaaaa}.
Combining it with the interior estimates ---this is standard, see for instance the proof of Theorem 3.3 in \cite{CS2}---  we obtain
\[ \|\bar u\|_{C^\gamma(B_{1/4}(z))} \le C\]
for some $\gamma>0$ (smaller than $p$ and than the exponent of interior regularity).
Then \eqref{estest} follows since clearly $B_{1/16}\subset B_{1/4}(z)$ ---recall that $z\in B_{1/8}$.

Finally we scale back \eqref{estest} from $\bar u$ to $u$ and we obtain  that, for all $\rho\ge 1$,
\[  [u ]_{C^\gamma(B_{\rho/16})} \le C \rho^{\alpha-\gamma}.\]
Sending $\rho\nearrow +\infty$ we obtain that  $[u]_{C^\gamma(\R^n)}=0$ and thus $u$ is constant.
\end{proof}

Let us now give the
\begin{proof}[Proof of Proposition \ref{lemcosa}]
Since $u$ is a solution of \eqref{cbonet}  then by  \cite[Theorem 3.3]{CS2} that $\|u\|$ satisfies the estimate
\begin{equation}\label{estalphaepsilon}
\|u\|_{C^{\alpha'}(B_1)}\le C\bigl(\|g\|_{C^\alpha(\R^n \setminus B_1)} + C_0\bigr)
\end{equation}
for some ${\alpha'}>0$ and $C$ depending only on $n$, $\sigma$, $\lambda$, $\Lambda$.
Note that although Theorem 3.3 in \cite{CS2} is stated with a general modulus of continuity, a inspection of to its proof shows that
a H\"older modulus of continuity for the exterior datum leads to another (worse) H\"older modulus of continuity up to the boundary.

By homogeneity we may always assume that $\|g\|_{C^\alpha(\R^n\setminus B_1)}+ C_0= 1$.

We want to show that the previous estimate \eqref{estalphaepsilon} holds also with ${\alpha'}$ replaced by $\alpha$, provided that $\alpha\in(0,\gamma)$, where $\gamma$ is the
exponent from  Lemma \ref{lemliouville}. That is, we want to establish \eqref{estalphaalpha}. The proof is by contradiction.

Similarly as in the proof of Proposition \ref{propmain}, if the estimate \eqref{estalphaalpha} is false
 then, for each integer $k\ge 0$, there exists  $g_k$, $C_{0,k}$,  and $u_k$, satisfying \eqref{cbonet} ---with $u$ and $g$ replaced by $u_k$  and $g_k$ respectively---
such that
\[ \|u_k\|_{C^{\alpha}(B_{1})} \ge k,\]
while $\|g_k\|_{C^\alpha(\R^n\setminus B_1)}+ C_{0,k}= 1$.

Using Lemma \ref{lemaux} we then have
\begin{equation}\label{2k22}
\sup_k \sup_ {z\in B_{1}} \sup_{r>0} \ r^{{{\alpha'}}-\alpha}\left[ u_k \right]_{C^{{\alpha'}}(B_{r}(z))} = +\infty,
\end{equation}

Next we define
\[
 \theta(r) := \sup_k  \sup_ {z\in  B_{1/2}}  \sup_{r'>r}  (r')^{{{\alpha'}}-\alpha}\,\bigl[u_k\bigr]_{C^{{\alpha'}} \left(B_{r'}(z)\right)} \,.
\]
Note that $\theta$ is monotone nonincreasing and $\theta(r)<+\infty$ for $r>0$ since we are assuming that  $\|g_k\|_{C^\alpha(\R^n\setminus B_1)}+ C_{0,k}= 1$ and hence by \eqref{estalphaepsilon} we have $\|u_k\|_{C^{\alpha'}(\R^n)}\le C$.
In addition, by \eqref{2k22} we have  $\theta(r)\nearrow +\infty$ as $r\searrow0$.

As in the proof of Proposition \ref{propmain}, there are sequences $r_m\searrow 0$, $k_m$, and $z_{m}\to z \in \overline B_{1/2}$, for which
\begin{equation}\label{nondeg222}
(r_m')^{{{\alpha'}}-\alpha}  \bigl[u_{k_m}\bigr]_{C^{{\alpha'}} \left(B_{r_m'}(z_m)\right)} \ge \frac{1}{2} \theta(r_m').
\end{equation}

We then consider the blow-up sequence
\[ v_m(x) = \frac{u_{k_m}(z_{m} +r_m' x)-u_{k_m}(0)}{(r_m')^{\alpha}\theta(r_m)}.\]

Note also that \eqref{nondeg222} is equivalent to the following inequality for all $m\geq1$:
\begin{equation}\label{nondeg3522}
[v_m]_{C^{{\alpha'}}(B_1)}\ge 1/2,
\end{equation}

Similarly as in the proof of Proposition \ref{propmain} we  obtain
\begin{equation}\label{growthc022}
[v_{m}]_{C^{{\alpha'}} (B_R)} \leq CR^{\alpha-{\alpha'}}\quad \textrm{for all}\ \,R\ge 1.
\end{equation}
and
\begin{equation}\label{growthc122}
\|v_{m}\|_{L^{\infty} (B_R)} \leq CR^{\alpha}\quad \textrm{for all}\ \,R\ge 1.
\end{equation}

As in the proof of Proposition \eqref{propmain}, by further rescaling $v_m$ if necessary, we may assume that in addition to \eqref{nondeg3522} the following holds
\begin{equation}\label{nondeg3522bis}
{\rm osc}_{B_1}  v_m  \ge 1/4.
\end{equation}

Using  \eqref{growthc022}, \eqref{growthc122} and the stability results for viscosity supersolutions and subsolutions  \cite[Lemma 4.3]{CS2} we
obtain that a subsequence of $v_m$ converges locally uniformly in $\R^n$ to a function $v$ satisfying the assumptions of Lemma \ref{lemliouville}.
Hence, $v$ is constant. Since  $v_m(0)=0$ for all $m$ we must have $v\equiv 0$, but then we reach a contradiction passing  \eqref{nondeg3522bis} to the limit.
\end{proof}

We finally give the

\begin{proof}[Proof of Theorem \ref{thm2}]
Let $u^\epsilon$ be the solution to \eqref{dirichletapprox}, whose existence is guaranteed by Proposition \ref{propapproximat}.

Since $\cI$ is an operator of the form \eqref{nontrinv} satisfying \eqref{nontrinv1}-\eqref{nontrinv22}-\eqref{nontrinv2} then so is the regularized operator $\cI^\epsilon$ up to replacing  $\lambda$, $\Lambda$  by $\lambda/C$, $C\Lambda$---see Remark \ref{cttsofapprox}.
Note that $M^+_{\mathcal L_0} u^\epsilon \ge - \sup_{a\in A} \|c_a\|_{L^\infty(B_1)}  \ge -C_0$ in $B_1$ and similarly $M^{-} u^\epsilon  \le  C_0$ in $B_1$.
Then,  Theorem 3.3 in \cite{CS2} provides with a modulus of continuity in $\overline {B_1}$ for  $u^\epsilon$ --- this modulus of continuity depends on the modulus of continuity of $g$, $n$, $\sigma$, $\lambda$, $\Lambda$, and $C_0$, but not on $\epsilon$.
Therefore, using the Ascoli-Arzel\`a theorem, is a sequence $\epsilon_m\searrow 0$ and a function $u\in C^0(\overline{B_1})$ such that $u^{\epsilon_m}\rightarrow u$  uniformly in $\overline B_1$.
Since  $\cI^\epsilon \rightarrow \cI$ weakly as $\epsilon \searrow 0$, it follows from the``stability lemma'' \cite[Lemma 4.3]{CS2} that the limiting function $u$
is a viscosity solution of $\cI (u,x)=0$ in $B_1$  that attains continuously the  complement data $g$.

Let us prove that in both cases (a) and (b) the viscosity  solution $u$ belongs to $C^{\sigma+\alpha}(B_1)$ and hence it is a classical solution.
For any $z\in B_1$ and $\rho>0$ such that $B_{\rho} \subset B_1$ consider the resealed function $\bar u^\epsilon(\bar x) = u^\epsilon (z+\rho \bar x)$. Exactly as in the proof of Theorem \eqref{thm}, the function $\bar u$ satisfies in
$B_1$ the rescaled equation $\bar \cI^\epsilon(\bar u,\bar x)=0$, where $\bar \cI^\epsilon$ is still of the form  \eqref{nontrinv}-\eqref{nontrinv2}-\eqref{nontrinv3} with the same $C_0$ , $A_0$ and ellipticity constants as $\cI^\epsilon$.

In the case (a), using Proposition \ref{lemcosa} we find that $u^\epsilon \in C^\alpha(\R^n)$ with
\[\|u^\epsilon \|_{C^\alpha(\R^n)}\le C\bigl( \|g\|_{C^\alpha(\R^n \setminus B_1)}+C_0\bigr).\]
Therefore, since $\|\bar u^\epsilon \|_{C^\alpha(\R^n)}\le \|u^\epsilon \|_{C^\alpha(\R^n)}$, applying Theorem \ref{thm} to the function $\bar u^\epsilon \in C^{\sigma+\alpha}(B_1)$  we obtain the estimate
\begin{equation}\label{AAAAA}
 \|\bar u^\epsilon \|_{C^{\sigma+\alpha}(B_{1/2})}\le C\bigl(C_0 + \|g\|_{C^\alpha(\R^n\setminus B_1)} \bigr).
\end{equation}
Since $u^{\epsilon_m} \rightarrow u$ uniformly in $B_1$, it follows that $\bar u^{\epsilon_m} \rightarrow \bar u$ uniformly in $B_1$ and thus, passing \eqref{AAAAA} to the limit we find
\[
 \|\bar u \|_{C^{\sigma+\alpha}(B_{1/2})}\le C\bigl(C_0 + \|g\|_{C^\alpha(\R^n\setminus B_1)} \bigr).
\]
This implies that $u$ is $C^{\sigma+\alpha}$ in $B_{\rho/2}(z)$ --- since $u(x)= \bar u\bigl(\frac{x-z}{\rho}\bigr)$. Since all these balls $B_{\rho/2}(z)$ cover $B_1$ we have $u\in C^{\sigma+\alpha}(B_1)$.
Moreover,  when we take $z=0$ and $\rho =1$ we then  have $\bar u\equiv u$ and  we the previous estimate for $\tilde u$  yields the desired estimate for $\|u\|_{C^{\sigma+\alpha}(B_{1/2})}$.

In the case (b), using the trivial barriers we prove that
\[ \|\bar u^\epsilon \|_{L^\infty(\R^n)} = \|u^\epsilon \|_{L^\infty(\R^n)} \le C\bigl(\|g\|_{L^\infty(\R^n\setminus B_1)}+ C_0 \bigl).\]
Therefore, using Corollary \ref{Lalpha} applied to the function $\bar u$ we obtain that
\[ \|\bar u^\epsilon \|_{C^{\sigma+\alpha}(B_{1/2})}\le C\bigl(C_0 + \|g\|_{L^\infty(\R^n\setminus B_1)} \bigr)\]
where $C_0$ is the constant from \eqref{nontrinv2} and  $C$ depends only on $n$ $\sigma$,  $\alpha$, $\lambda$,  $\Lambda$, and $A_0$.
Again, this implies that $u\in C^{\sigma+\alpha}(B_1)$ and the estimate for $\|u\|_{C^{\sigma+\alpha}(B_{1/2})}$.

Finally, in both cases (a) and (b), after having proved the existence of a classical solution (attaining continuously the complement data),  its uniqueness among the class of viscosity solutions
follows from the trivial comparison principle between a classical solution and a viscosity solution.
\end{proof}

\section{Counterexamples to $C^{\sigma+\alpha}$ regularity for merely bounded complement data}
\label{SecCounter}

In this section we find sequences  $u_m$ of solutions to equations with rough kernels in $B_1$ that satisfy $\|u_m\|_{L^\infty(\R^n)}\le C$ and $\|u_m\|_{C^{\sigma+\alpha}(B_{1/2})} \nearrow \infty$ as $m\to \infty$ for all $\alpha>0$.
We consider the case of a linear equation and the case of a nonlinear convex equation involving the extremal operator $M^+_{\mathcal L_0}$.
Such sequences can be regarded as counterexamples to a $C^{\sigma+\alpha}$ interior estimate for linear or convex equations with rough kernels with merely bounded complement data.
These counterexamples  are built here in dimension $n=1$. Clearly, looking at these one-dimensional counterexamples as 1-D profiles in $\R^n$ we will have counterexamples in every dimension.

We will need the following elementary
\begin{claim}\label{claim1}
Assume that some function $u$ and $\alpha>0$ it is
\[ \| u\|_{C^{\sigma+\alpha}(-1/2,1/2)} \le C_0\]
and
\[  \| u\|_{C^\alpha(\R)} \le C_0.\]

Then, for all $L\in \mathcal L_0$ we have
\begin{equation} \label{abcde}\|Lu\|_{C^{\alpha'}(-1/4,1/4)}\le CC_0,
\end{equation}
where $\alpha'>0$ and $C$ depend only on $\sigma$ and ellipticity constants.
\end{claim}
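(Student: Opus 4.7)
The plan is to decompose the integral defining $Lu(x)-Lu(x')$ into three pieces according to the size of $|y|$ relative to $d:=|x-x'|$ and to $1/8$, and to use the two hypotheses on $u$ in targeted way: the global $C^\alpha(\R)$ bound on the tail $|y|>1/8$ (where the shifted points $x\pm y$ may exit $(-1/2,1/2)$), and the much stronger interior $C^{\sigma+\alpha}$ bound on $|y|\le 1/8$ (where all four points $x\pm y$, $x'\pm y$ lie in $(-3/8,3/8)\subset(-1/2,1/2)$, and the interior hypothesis applies pointwise). For $x,x'\in(-1/4,1/4)$ with $d\le 1/16$ (the remaining case $d>1/16$ is trivial because $\|Lu\|_{L^\infty}$ is already controlled by the two hypotheses and a direct application of the kernel bound), I split
\[ Lu(x)-Lu(x') = \int_{|y|>1/8}+\int_{d<|y|\le 1/8}+\int_{|y|\le d}. \]

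For the tail $|y|>1/8$, I use only $\|u\|_{C^\alpha(\R)}\le C_0$: each of the six evaluations appearing in $\delta^2u(x,y)-\delta^2u(x',y)$ shifts by at most $C_0 d^\alpha$ under $x\mapsto x'$, and $\int_{|y|>1/8}K(y)\,dy\le C\Lambda$ by \eqref{L0}, yielding a contribution $\le CC_0 d^\alpha$. For the innermost piece $|y|\le d$, I use the pointwise estimate $|\delta^2u(x,y)|\le CC_0|y|^{\min(\sigma+\alpha,2)}$, valid on $(-1/2,1/2)$ by Hölder interpolation when $\sigma+\alpha\le 1$, by integrating the Hölder bound on $u'$ when $1<\sigma+\alpha\le 2$, and by Taylor expansion (with cancellation of the odd $y$-terms) when $\sigma+\alpha>2$. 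Applied at both $x$ and $x'$ and integrated against $|y|^{-1-\sigma}$, this produces a contribution bounded by $CC_0\int_0^d y^{\min(\sigma+\alpha,2)-1-\sigma}\,dy=CC_0\, d^{\min(\alpha,\,2-\sigma)}$.

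The intermediate piece $d<|y|\le 1/8$ is the delicate one. Writing
\[ 2\bigl[\delta^2u(x,y)-\delta^2u(x',y)\bigr] = \bigl(u(x+y)-u(x'+y)-u(x)+u(x')\bigr)+\bigl(u(x-y)-u(x'-y)-u(x)+u(x')\bigr), \]
I recognize each term as a mixed second difference $\Delta_{x-x'}\Delta_{\pm y} u(x')$ and control it by interpolation. Setting $\beta:=\min(\sigma+\alpha,1)$, the $C^\beta$ regularity of $u$ on $(-1/2,1/2)$ yields the two elementary bounds $CC_0 d^\beta$ and $CC_0|y|^\beta$, hence by Hölder interpolation $|\Delta_d\Delta_y u|\le CC_0\, d^{a}|y|^{\beta-a}$ for any $a\in[0,\beta]$; when $\sigma+\alpha>1$ the sharper bound $|\Delta_d\Delta_y u|\le CC_0\,d^{\sigma+\alpha-1}|y|$ from the Hölder continuity of $u'$ is also at my disposal. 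Choosing $a$ just below $\beta-\sigma$ (or using the sharper bound when $\sigma\ge 1$) makes the integrand $y^{\beta-a-1-\sigma}$ integrable on $[d,1/8]$ and produces a contribution $CC_0 d^{\alpha'}$ with $\alpha'>0$.

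The main obstacle is verifying that the three contributions can be combined into a \emph{single} strictly positive Hölder exponent $\alpha'$ depending only on $\sigma$ and the ellipticity constants. This requires a short case analysis according to whether $\sigma$ is less than, equal to, or greater than $1$ (to select the right interpolation in the intermediate region) and according to whether $\sigma$ is close to $2$ (where the innermost estimate $d^{2-\sigma}$ is the bottleneck). In each regime an explicit choice such as $\alpha':=\tfrac{1}{2}\min(\alpha,\,2-\sigma,\,\sigma+\alpha-\sigma)$ — suitably adjusted — produces a positive gain, after which summing the three estimates yields the desired bound $\|Lu\|_{C^{\alpha'}(-1/4,1/4)}\le CC_0$.
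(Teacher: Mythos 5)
Your argument is correct and is essentially the paper's own proof: both control $\bigl|\delta^2u(x,y)-\delta^2u(x',y)\bigr|$ by trading the interior $C^{\sigma+\alpha}$ bound (decay in $|y|$) against the global $C^\alpha$ bound (a power of $d=|x-x'|$), the paper compressing this into the single interpolated estimate $CC_0|y|^{\theta(\sigma+\alpha)}d^{(1-\theta)\alpha}$ with $\theta(\sigma+\alpha)>\sigma$ on $|y|\le 1$ plus the crude $d^\alpha$ bound on the tail, while you achieve the same trade-off through a three-zone splitting at $|y|\sim d$ and $|y|\sim 1/8$. The only cosmetic correction is that in the intermediate zone, when $\sigma+\alpha>2$, the bound $d^{\sigma+\alpha-1}|y|$ should be replaced by $d^{\min(\sigma+\alpha-1,1)}|y|$ (Hölder exponents cannot exceed one), which still yields a positive gain, so the conclusion is unaffected.
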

\begin{proof}
We have
\[\bigl| \delta_2 u(x_1,y) - \delta_2 u(x_2,y)\bigr|
\le
\begin{cases}
C C_0 |y|^{\sigma+\alpha} \\
C_0 |x_1-x_2|^\alpha\\
C C_0|y|^{\theta(\sigma+\alpha)}|x_1-x_2|^{(1-\theta)\alpha}.
\end{cases}
\]
The third bound is obtained by ``interpolating''  the first and the second ones.

But then for all $x_1, x_2\in (-1/4,1/4)$
\[
\begin{split}
|L  u(x_1)-  L  u (x_2)| &\le C\int_{\R} \frac{\bigl| \delta_2  u (x_1,y) - \delta_2  u (x_2,y)\bigr|}{|y|^{n+\sigma}}\,dy\\
&\le  C \int_{-1}^1 \frac{|y|^{\theta(\sigma+\alpha)}|x_1-x_2|^{(1-\theta)\alpha}}{|y|^{1+\sigma}} \,dy + \int_{\R\setminus (-1,1)} \frac{|x_1-x_2|^{\alpha}}{|y|^{1+\sigma}} \,dy \\
&\le C |x_1-x_2|^{\alpha'},
\end{split}
\]
where we have taken $\theta<1$ very close to $1$ such that $\theta(\sigma+\alpha)>\sigma$ and $\alpha' = (1-\theta)\alpha$.
\end{proof}
We note that with the same assumptions of the Claim it is possible (and not difficult) to prove that \eqref{abcde} holds for $\alpha'=\alpha$ but the previous rough version will suffice for our purposes.

\subsection{Linear equations with rough kernels}

In $\R$,  for every  integer $m\ge1$ consider the function $u_m$ that solves
\[
\begin{cases}
L_m u_m= 0 \quad &\mbox{in }(-1,1)\\
u_m= 0 \quad & \mbox{in }[-2,-1]\cup [1,2]\\
{\rm sign} \sin(m\pi x) & \mbox{in }(-\infty,-2]\cup[2,\infty),
\end{cases}
\]
where $L_m$ is defined by
\[L_m v = \int_{\R}  \delta^2v(x,y) K_m(y)\,dy\]
for
\[
K_m(y)=
\begin{cases}
|y|^{-1-\sigma}\quad& \mbox{in }(-1,1)\\
\bigl(2+{\rm sign}\cos(m\pi y)\bigr) |y|^{-1-\sigma} & \mbox{in }(-\infty, -1)\cup (1,+\infty).
\end{cases}
\]

Next we use that for $p>0$ small enough the function $\psi(x)={\rm dist}\,\bigl(x, [-1/4,1/4]\bigr)^p$ is a supersolution in $(-1/4-\epsilon,-1/4)\cup (1/4,1/4+\epsilon)$,
for some $\epsilon>0$.  Namely, it satisfies $M^+_{\mathcal L_0} \psi \le 0$ there ---see \cite{CS2}.
Since $u_m\equiv 0$ in $(-2,-1)\cup(1,2)$, by using translates of $\psi$ (respectively $-\psi$) as upper  (lower) barrier we prove that
\[  |u_m| \le C {\rm dist}\bigl(x, (-\infty,-1]\cup[1,\infty)\bigr)^p \quad \mbox{in }(-1,1).\]
Combining this with known interior estimates we obtain, for small enough $\alpha>0$,
\[ \|u_m\|_{C^\alpha([-1,1])} \le C, \quad \mbox{for all } m.\]

Finally let us show that it is impossible that $\|u_m\|_{C^{\sigma+\alpha}(-1/2,1/2)}\le C$ with $\alpha>0$ and $C$ independent of $m$.
Let us write $u_m = u_m^{(1)} + u_m^{(2)}$, where $u_m^{(1)} = u_m \chi_{(-1,1)}$ and
\[ u_m^{(2)} =
\begin{cases}
0 \quad &\mbox{in }(-2,2)\\
{\rm sign} \sin(m\pi x) & \mbox{in }(-\infty,-2]\cup[2,\infty).
\end{cases}\]
We would then have
\[\| u_m^{(1)}\|_{C^{\sigma+\alpha}(-1/2,1/2)} + \|u_m^{(1)}\|_{C^\alpha(\R)} \le C.\]
Thus using Claim \ref{claim1} we would obtain
\[\| L_m u_m^{(1)}\|_{C^{\alpha'}(-1/4,1/4)} \le C,\]
 with $C$ independent of $m$.

Next, on the other hand
\[ L_m u_m^{(2)}(0) = 0 \quad \mbox{by odd symmetry of $u_m^{(2)}$}.\]
Let us now compute $L_m u_m^{(2)}\left( {\textstyle\frac{1}{2m}}\right) $. We we that for $|y|>2+\frac{1}{2m}$ we have
\[
\begin{split}
\delta^2 u_m^{(2)} \left( {\textstyle\frac{1}{2m}}, y\right)&= \frac{1}{2} {\textstyle \bigl\{ {\rm sign}\sin\bigl(\frac \pi 2 +m\pi y \bigr) + {\rm sign}\sin\bigl(\frac \pi 2 - m\pi y \bigr) - 2u_m^{(2)}\bigl(\frac 1 {2m}\bigr)\bigr\}}
\\
&=  {\rm sign}\cos\bigl( m\pi y \bigr)-0.
\end{split}
\]
We thus obtain
\[\begin{split}
L_m u_m^{(2)}\left( {\textstyle\frac{1}{2m}}\right) &= \int_{\R\setminus (-2,2)}   {\rm sign}\cos(m\pi y)  \,\frac{2+{\rm sign}\cos(m\pi y)}{|y|^{1+\sigma}}\,dy   + O(1/m)\\
&= c +  2\int_{\R\setminus (-2,2)}   \frac{{\rm sign}\cos(m\pi y)}{|y|^{1+\sigma}}\,dy + O(1/m)\\
&= c + o(1) \quad \mbox{as }m\nearrow \infty,
\end{split}\]
where $c = \int_{\R\setminus (-2,2)} |y|^{-1-\sigma}\,dy >0$.

Therefore,
\[\begin{split}
0  &= L_m u_m\left( {\textstyle\frac{1}{2m}}\right) - L_m u_m (0)
\\
&\ge L_m u_m^{(2)}\left( {\textstyle\frac{1}{2m}}\right) - L_m u_m^{(2)} (0)  -   \bigl|L_m u_m^{(1)}\left( {\textstyle\frac{1}{2m}}\right) - L_m u_m^{(1)} (0)\bigr|
\\
&\ge c - o(1) - C\left( {\textstyle\frac{1}{2m}}\right)^{\alpha'} \quad \mbox{as }m\nearrow \infty;
\end{split}\]
a contradiction.

\subsection{Nonlinear convex equation equation with the $M^+_{\mathcal L_0}$}
This is a variation of the previous example.
In $\R$,  for every $m\ge1$ consider the function $u_m$  that solves
\[
\begin{cases}
M^+_{\mathcal L_0} u_m= 0 \quad &\mbox{in }(-1,1)\\
u_m= 0 \quad & \mbox{in }[-2,-1]\cup [1,2]\\
{\rm sign} \sin(m\pi x) & \mbox{in }(-\infty,-2]\cup[2,\infty).
\end{cases}
\]

Since $-1 \le u_m\le 1$ in $\R$ but $\bigl|\{ u_m < 0\}\cap (-5,5)\bigr| \ge 1$ and $M^+_{\mathcal L_0} u=0$ in $(-1,1)$, it will be
\begin{equation}\label{latheta}
\-1\le  u_m \le 1-\tau \quad \mbox{in } [-1/2,1/2]
\end{equation}
for some $\tau>0$ depending only on $\sigma$ and ellipticity constants.

In addition, we use as in the previous subsection that for $p>0$ small enough the function $\psi(x)={\rm dist}\,\bigl(x, [-1/4,1/4]\bigr)^p$ is a supersolution in $(-1/4-\epsilon,-1/4)\cup (1/4,1/4+\epsilon)$,
for some $\epsilon>0$. Namely, it satisfies $M^+_{\mathcal L_0} \psi \le 0$ there.
Since $u_m\equiv 0$ in $(-2,-1)\cup(1,2)$, by using translates of $\psi$ (respectively $-\psi$) as upper  (lower) barrier we prove that
\[  |u_m| \le C {\rm dist}\bigl(x, (-\infty,-1]\cup[1,\infty)\bigr)^p\quad \mbox{in }(-1,1).\]
Combining this with known interior estimates we obtain, for small enough $\alpha>0$,
\begin{equation}\label{Calpha-all-m}
 \|u_m\|_{C^\alpha([-2,2])}  =  \|u_m\|_{C^\alpha([-1,1])} \le C, \quad \mbox{for all } m.
 \end{equation}

Next we use that for $|x|>2$  we have $u(x)={\rm sign} \sin(m\pi x)$, which is odd, we obtain
\begin{equation}\label{delta20}
 \delta^2u_m(0,y) = -u_m(0) \le 0 \quad  \mbox{for }|y|>2.
 \end{equation}
The fact that $u_m(0)\ge0$ can be easily deduced by observing that for all $L\in\mathcal L_0$ the  solution to the linear equation $Lw=0$ in $(-1,1)$ with the same boundary data as $u_m$ satisfies $w(0)=0$ (by odd symmetry), and $w$ it is a subsolution to our equation since
$M^+_{\mathcal L_0} w \ge Lw = 0$.

Let us denote
\[ b_m(y) =  \Lambda\bigl(\delta^2 u_m(0,y)\bigr)^+ +\lambda\bigl(\delta^2 u_m(0,y)\bigr)^-,  \]
that is,
\begin{equation}\label{M+0}
 M^+_{\mathcal L_0} u_m (0) = \int_\R \delta^2 u(0,y) \frac{b_m(y)}{|y|^{n+\sigma}}dy.
\end{equation}
Hence, by \eqref{delta20},
\begin{equation}\label{b}
 b_m (y)\equiv \lambda\quad \mbox{for }|y|>2.
 \end{equation}

Instead, at $x= \frac{1}{2m}$ we have
\begin{equation}\label{delta2P}
\delta^2u_m\left({\textstyle \frac{1}{2m}},y\right) =   {\rm sign}\cos(m\pi y)  -u_m\left({\textstyle \frac{1}{2m}}\right) \quad \mbox{ for }y\in\R\setminus \left(-2-{\textstyle \frac{1}{2m}}, 2-{\textstyle \frac{1}{2m}}\right).
\end{equation}
Hence if we let
\[ \tilde b_m(y) =  \Lambda\left(\delta^2 u_m\left({\textstyle \frac{1}{2m}},y\right)\right)^+ +\lambda\left(\delta^2 u_m\left({\textstyle \frac{1}{2m}},y\right)\right)^-,  \]
that is,
\begin{equation}\label{M+P}
M^+_{\mathcal L_0} u_m \left({\textstyle \frac{1}{2m}}\right) = \int_\R \delta^2  u_m\left({\textstyle \frac{1}{2m}},y\right) \frac{\tilde b_m(y)}{|y|^{n+\sigma}}dy.
\end{equation}
we then have
\begin{equation}\label{tildeb}
\tilde b_m (y) = \lambda + \frac{\Lambda-\lambda}{2}(1+{\rm sign}\,\cos (m\pi y))
\quad \mbox{ for }y\in \R\setminus \left(-2-{\textstyle \frac{1}{2m}}, 2-{\textstyle \frac{1}{2m}}\right).
\end{equation}

For all $\gamma\in(0,1)$ and for all $m$ using that $u_m(0) \in [0,1-\tau]$ by  \eqref{latheta}  we obtain
\[
 \begin{split}
\int_{\R\setminus(-2-\gamma,2+\gamma)} \bigl({\rm sign}\cos(m\pi y)  -u_m(0) \bigr) \frac{\frac{\Lambda-\lambda}{2}(1+{\rm sign}\,\cos (m\pi y)) }{ |y|^{n+\sigma}} \,dy  \ge  2 c_1 >0
\end{split}
\]
where $c_1$ is independent on $m$ ---like $\tau$.
Therefore, for all $m$ large enough so that $\left|\int_{\R\setminus(-2-\gamma,2+\gamma)} {\rm sign}\cos(m\pi y) \frac{\lambda}{ |y|^{n+\sigma}} \,dy\right| \le c_1 $ we have
\begin{equation}\label{needaname}
 \begin{split}
&\int_{\R\setminus(-2-\gamma,2+\gamma)} \bigl({\rm sign}\cos(m\pi y)  -u_m(0) \bigr) \frac{\lambda + \frac{\Lambda-\lambda}{2}(1+{\rm sign}\,\cos (m\pi y)) }{ |y|^{n+\sigma}} \,dy  \ge\\
&\hspace{50pt}\ge  c_1- \int_{\R\setminus(-2-\gamma,2-\gamma)}  u_m(0)  \frac{\lambda}{ |y|^{n+\sigma}}\,dy.
\end{split}
\end{equation}

Next, from \eqref{needaname}, using \eqref{delta20}, \eqref{b}, \eqref{delta2P}, and \eqref{tildeb} we obtain (for $m$ large enough, in particular $\frac{1}{2m}<\gamma$)
\begin{equation}\label{diffpositive}
 \begin{split}
& \int_{\R\setminus(-2-\gamma,2+\gamma)}  \delta^2u\left({\textstyle \frac{1}{2m}},y\right) \frac{\tilde b(y)}{|y|^{n+\sigma}}\,dy  -  \int_{\R\setminus(-2-\gamma,2+\gamma)}    \delta^2u(0,y) \frac{b(y)}{|y|^{n+\sigma}}\,dy\ge\\
&\hspace{20pt}\ge  c_1 -\left|  u_m\left({\textstyle \frac{1}{2m}}\right)- u_m(0) \right| \int_{\R\setminus(-2-\gamma,2+\gamma)} \frac{\lambda + \frac{\Lambda-\lambda}{2}(1+{\rm sign}\,\cos (m\pi y)) }{ |y|^{n+\sigma}} \,dy
\\
&\hspace{20pt}\ge  c_1 - C\left|{ \textstyle \frac{1}{2m}}- 0 \right|^\alpha.
\end{split}
\end{equation}
In the last inequality we have used \eqref{Calpha-all-m}.

Let us show that it is impossible that $\|u_m\|_{C^{\sigma+\alpha}(-1/2,1/2)}\le C$, with $\alpha>0$ and $C$ independent of $m$.

To reach a contradiction let us show that the kernels $b_m(y)|y|^{-n-\sigma}$ and $\tilde b_m(y)|y|^{-n-\sigma}$  would ``perform similarly'' when integrated  only in $(-2,2)$ against $\delta^2 u_m$ at the points $0$ and ${\textstyle \frac{1}{2m}}$.
More precisely, let us prove the bound
\begin{equation}\label{thebound}
\left| \int_{(-2+\gamma,2-\gamma)} \delta^2u(0,y) \frac{b(y)}{|y|^{n+\sigma}}\,dy -  \int_{(-2+\gamma,2-\gamma)} \delta^2u\left({\textstyle \frac{1}{2m}},y\right) \frac{\tilde b(y)}{|y|^{n+\sigma}}\,dy \right| \le C (1/m)^{{\alpha'}},
\end{equation}
for some ${\alpha'}\in (0,\alpha)$.
To prove \eqref{thebound} we use that that for $1/m<\gamma$ and $|y|<2-\gamma$ we would have
\[ \left|\delta^2u_m(0,y) -\delta^2u_m\left( {\textstyle \frac{1}{2m}},y\right) \right| \le
\begin{cases}
C |y|^{\sigma+\alpha} \\
C \left|0-{\textstyle \frac{1}{2m}}\right|^{\alpha}.
\end{cases}
\]
The first bound is obtained from the assumption $\|u_m\|_{C^{\sigma+\alpha}(-1/2,1/2)}\le C$ and the second from \eqref{Calpha-all-m}.
Hence, ``interpolating'' the two bounds we obtain
 \[
 \left|\delta^2u_m(0,y) -\delta^2u_m\left( {\textstyle \frac{1}{2m}},y\right) \right| \le C_1|y|^{\theta(\sigma+\alpha)} \left| {\textstyle \frac{1}{2m}}\right|^{(1-\theta)\alpha} = C_1|y|^{\sigma+{\alpha'}} \left|{\textstyle \frac{1}{2m}}\right|^{{\alpha'}},
 \]
where we have taken $\theta\in (0,1)$ close to $1$ and ${\alpha'}>0$ so that $\theta(\sigma+\alpha)= \sigma+{\alpha'}$ and $(1-\theta)\alpha={\alpha'}$.

Therefore if we split the interval $(-2+\gamma,2-\gamma)$ into the thee disjoint subsets
\[\boldsymbol A= \left\{ y\in (-2+\gamma,2-\gamma)\,:\,\delta^2u_m(0,y) \ge 2C_1 |y|^{\sigma+{{\alpha'}}} \left| {\textstyle \frac{1}{2m}}\right|^{{\alpha'}}  \right\}, \]
\[\boldsymbol B= \left\{ y\in (-2+\gamma,2-\gamma)\,:\,\delta^2u_m(0,y) \le- 2C_1 |y|^{\sigma+{{\alpha'}}} \left| {\textstyle \frac{1}{2m}}\right|^{{\alpha'}}  \right\},\]
 and
\[\boldsymbol C = (-2+\gamma,2-\gamma)\setminus(\boldsymbol A\cup \boldsymbol B).\]
Then it is $b =\tilde b = \Lambda$ in $\boldsymbol A$, $b=\tilde b = \lambda$ in $\boldsymbol B$ and
\[|\delta^2u_m(0,y)|+ \left|\delta^2u_m\left({\textstyle \frac{1}{2m}},y\right)\right| \le 4C_1 |y|^{\sigma+{{\alpha'}}} (1/m)^{{\alpha'}} \quad \mbox{ in } \boldsymbol C.\]
This clearly implies \eqref{thebound}.

Finally, using \eqref{M+0}, \eqref{M+P}, \eqref{diffpositive} , and \eqref{thebound}, we obtain
\[
\begin{split}
0 &= M^+_{\mathcal L_0} u_m \left({\textstyle \frac{1}{2m}}\right)  - M^+_{\mathcal L_0} u_m (0) \\
 &\ge c_1  -C (1/m)^{\alpha} -C(1/m)^{{\alpha'}} - C\gamma,
\end{split}
\]
which yields to contradiction taking first $\gamma < c_1/C$ and then $m$ large enough.

\section*{Acknowledgements}
The author is indebted to Xavier Cabr\'e, Hector Chang Lara, and Luis Silvestre for their interesting comments on the paper.

\end{document}